\setlist[enumerate,1]{label=(\roman*)}
\numberwithin{equation}{section}
\declaretheoremstyle[
  shaded={bgcolor=\thmcolor,
  }
]{plain}
\declaretheoremstyle[
  headfont=\normalfont\bfseries,
  bodyfont=\normalfont,
  shaded={bgcolor=\defcolor}
]{noital}
\declaretheoremstyle[
  headfont=\normalfont\bfseries,
  bodyfont=\normalfont,
]{noital}
\declaretheorem[style=plain,numberwithin=section,name=Theorem]{theorem}
\declaretheorem[style=plain,sibling=theorem,name=Proposition]{proposition}
\declaretheorem[style=plain,sibling=theorem,name=Lemma]{lemma}
\declaretheorem[style=plain,sibling=theorem,name=Corollary]{corollary}
\declaretheorem[style=plain,sibling=theorem,name=Conjecture]{conjecture}
\declaretheorem[style=plain,sibling=theorem,name=Problem]{problem}
\declaretheorem[style=noital,sibling=theorem,name=Observation]{observation}
\newcommand{\indef}[1]{\emph{#1}}
\newcommand{\defined}{\mathrel{\coloneqq}}
\newcommand{\defines}{\mathrel{\eqqcolon}}
\DeclarePairedDelimiter{\p}{\lparen}{\rparen}
\renewcommand{\leq}{\leqslant}
\renewcommand{\geq}{\geqslant}
\newcommand{\st}{\mathbin{\colon}}
\DeclarePairedDelimiter{\set}{\lbrace}{\rbrace}
\newcommand{\emptyset}{\varnothing}
\DeclarePairedDelimiter{\card}{\lvert}{\rvert}
\DeclareMathOperator{\ind}{\mathbf{1}}
\newcommand{\from}{\colon}
\DeclarePairedDelimiter{\floor}{\lfloor}{\rfloor}
\newcommand{\boundary}{\partial}
\DeclarePairedDelimiterX{\abs}[1]
  {\lvert}{\rvert}{\ifblank{#1}{\,\cdot\,}{#1}}
\DeclareMathDelimiter{\given}
  {\mathbin}{symbols}{"6A}{largesymbols}{"0C}
\DeclareMathOperator{\Prob}{\mathbb{P}}
\DeclarePairedDelimiterXPP{\prob}[1]
  {\Prob}{\lparen}{\rparen}{}
  {\renewcommand{\given}{\nonscript\;\delimsize\vert\nonscript\;\mathopen{}}#1}
\DeclareMathOperator{\Expec}{\mathbb{E}}
\DeclarePairedDelimiterXPP{\expec}[1]
  {\Expec}{\lparen}{\rparen}{}
  {\renewcommand{\given}{\nonscript\;\delimsize\vert\nonscript\;\mathopen{}}#1}
\newcommand{\eps}{\varepsilon}
\newcommand{\CC}{\mathbb{C}}
\newcommand{\HH}{\mathbb{H}}
\newcommand{\RR}{\mathbb{R}}
\newcommand{\TT}{\mathbb{T}}
\newcommand{\ZZ}{\mathbb{Z}}
\newcommand{\cC}{\mathcal{C}}
\newcommand{\cE}{\mathcal{E}}
\newcommand{\cG}{\mathcal{G}}
\newcommand{\cH}{\mathcal{H}}
\newcommand{\cP}{\mathcal{P}}
\newcommand{\rB}{\mathrm{B}}
\newcommand{\rM}{\mathrm{M}}
\newcommand{\rR}{\mathrm{R}}
\newcommand{\rT}{\mathrm{T}}
\newcommand{\rU}{\mathrm{U}}
\pgfplotsset{compat=1.15,width=10cm,height=10cm}
\newcommand{\bond}{}
\newcommand{\site}{\operatorname{site}}
\newcommand{\pcbond}{p_{\operatorname{c}}^{\bond}}
\newcommand{\pcmb}[1]{p_{#1}^{\bond}}
\newcommand{\Pbond}[1]{\Prob_{#1}^{\bond}}
\newcommand{\Psite}[1]{\Prob_{#1}^{\,\site}}
\DeclarePairedDelimiterXPP{\pbond}[2]
  {\Pbond{#1}}{\lparen}{\rparen}{}
  {\renewcommand{\given}{\nonscript\;\delimsize\vert\nonscript\;\mathopen{}}
  \ifblank{#2}{\,\cdot\,}{#2}}
\DeclarePairedDelimiterXPP{\psite}[2]
  {\Psite{#1}}{\lparen}{\rparen}{}
  {\renewcommand{\given}{\nonscript\;\delimsize\vert\nonscript\;\mathopen{}}
  \ifblank{#2}{\,\cdot\,}{#2}}
\newcommand{\vboundary}{\boundary_{\operatorname{v}}}
\begin{document}

\title[The Maker-Breaker percolation game on a random board]{The Maker-Breaker percolation game \\ on a random board}

\author{Vojt\v{e}ch Dvo\v{r}\'ak \and Adva Mond \and Victor Souza}
\address{Department of Pure Mathematics and Mathematical Statistics (DPMMS), University of Cambridge, Wilberforce Road, Cambridge, CB3 0WA, United Kingdom}
\email{\{vd273,am2759,vss28\}@cam.ac.uk}


\begin{abstract}
The $(m,b)$ Maker-Breaker percolation game on $(\mathbb{Z}^2)_p$, introduced by Day and Falgas-Ravry, is played in the following way.
Before the game starts, each edge of $\mathbb{Z}^2$ is removed independently with probability $1-p$.
After that, Maker chooses a vertex $v_0$ to protect.
Then, in each round Maker and Breaker claim respectively $m$ and $b$ unclaimed edges of $G$.
Breaker wins if after the removal of the edges claimed by him the component of $v_0$ becomes finite, and Maker wins if she can indefinitely prevent Breaker from winning.

We show that for any $p < 1$, Breaker almost surely has a wining strategy for the $(1,1)$ game on $(\mathbb{Z}^2)_p$.
This fully answers a question of Day and Falgas-Ravry, who showed that for $p = 1$ Maker has a winning strategy for the $(1,1)$ game.
Further, we show that in the $(2,1)$ game on $(\mathbb{Z}^2)_p$ Maker almost surely has a winning strategy whenever $p > 0.9402$, while Breaker almost surely has a winning strategy whenever $p < 0.5278$.
This shows that the threshold value of $p$ above which Maker has a winning strategy for the $(2,1)$ game on $\mathbb{Z}^2$ is non-trivial.
In fact, we prove similar results in various settings, including other lattices and biases $(m,b)$.

These results extend also to the most general case, which we introduce, where each edge is given to Maker with probability $\alpha$ and to Breaker with probability $\beta$ before the game starts.
\end{abstract}

\maketitle


\section{Introduction}
\label{sec:intro}

Let $\Lambda$ be an infinite connected graph and let $m$ and $b$ be positive integers.
The \indef{$(m,b)$ Maker-Breaker percolation game on $\Lambda$} is the game played in the following way.
Before the game starts, Maker chooses a vertex $v_0$\footnote{This can be seen as the round zero of the game. In the original formulation by Day and Falgas-Ravry, the vertex $v_0$ was part of the board, not chosen by Maker. This minor modification was made to facilitate the discussion of random boards.}.
After that, Maker and Breaker take turns by alternately claiming unclaimed edges from $\Lambda$.
Maker goes first and claims $m$ unclaimed edges.
In his turn, Breaker claims $b$ unclaimed edges.
Maker's goal is to `protect' $v_0$, meaning to keep it contained in an infinite connected component throughout the game.
If at any given turn, the removal of the edges claimed by Breaker would make the component of $v_0$ finite in $\Lambda$, Breaker wins.
Maker wins if she has a strategy that guarantees that Breaker does not win.
We refer to $\Lambda$ as the \indef{board} of the game and simply write \indef{$(m,b)$ game on $\Lambda$} instead of $(m,b)$ Maker-Breaker percolation game on $\Lambda$.

This game belongs to the broad class of \indef{Maker-Breaker games}, a widely studied type of \indef{positional game}.
The study of positional games has grown into an important area of modern combinatorics, with connections to Ramsey theory, extremal graph theory, probabilistic combinatorics, and to computer science.
The foundational books of Beck \cite{Beck2008-ua} and of Hefetz, Krivelevich, Stojaković and Szabó~\cite{Hefetz2014-qc} elaborates many such connections.
Whenever $(m,b) \neq (1,1)$ we have an example of a \indef{biased Maker-Breaker game}, as originally introduced by Chvátal and Erd\H{o}s \cite{Chvatal1978-hm}.
In the \indef{unbiased} case $(m,b) = (1,1)$, the Maker-Breaker percolation game is a generalisation of the famous Shannon switching game to an infinite board, see Minsky~\cite{Minsky1961-bq} for a full description of the latter.

The Maker-Breaker percolation game, motivated by its analogy with percolation models in statistical physics, was introduced by Day and Falgas-Ravry~\cites{Day2021-sa, Day2021-ua}.
Naturally, the most interesting boards to consider are lattices and in particular $\ZZ^d$.
Among many other things, they have shown that Maker has a simple winning strategy for the $(1,1)$ game on $\ZZ^2$.
Indeed, Maker pairs the edges $(x,y)(x+1,y)$ and $(x,y)(x,y+1)$ so if Breaker claims one of them Maker immediately claims the other.
This guarantees that from any vertex there is always an infinite up-right path which contains no edges claimed by Breaker, and thus Maker wins.
Note that this particular pairing strategy is sensitive to perturbations of the board such as the removal of a few edges.
With that in mind, Day and Falgas-Ravry asked whether Maker can still win on $\ZZ^2$ after the removal of a certain proportion of its edges.

More precisely, one can study the $(m,b)$ game on $(\Lambda)_p$, the random board obtained from $\Lambda$ after bond percolation with parameter $p$ is performed\footnote{This was previously referred to in the literature as the \indef{$p$-polluted} game on $\Lambda$, the main difference being that now $v_0$ can be chosen by Maker. This distinction was necessary as the $(m,b)$ game was previously on a board $(\Lambda, v_0)$, that is, $v_0$ was fixed. We opted to always give Maker the choice of $v_0$.}.
Namely, every edge of $\Lambda$ is independently kept with probability $p$ and removed with probability $1 - p$.
Write $(\Lambda)_p$ for the resulting random subgraph of $\Lambda$, see \Cref{sec:percolation} for detailed definitions.
Denote by $\pcbond(\Lambda)$ the bond percolation threshold for $\Lambda$, that is, the supremum of $p$ such that $(\Lambda)_p$ has no infinite connected component almost surely\footnote{An that occurs with probability $1$ is said to occur \indef{almost surely}.}.

It is a classical result that $\pcbond(\ZZ^2) = 1/2$.
Indeed, Harris~\cite{Harris1960-di} showed that there are no infinite connected components in $(\ZZ^2)_p$ for $p \leq 1/2$, while the celebrated result of Kesten~\cite{Kesten1980-fj} says that there is an infinite connected component if $1/2 < p \leq 1$.
Therefore, Maker has no chance of winning any game on $(\ZZ^2)_p$ with $p \leq 1/2$.
As Maker wins the $(1,1)$ game if $p = 1$, Day and Falgas-Ravry asked for the critical value $p^\ast$ such that Breaker can win the $(1,1)$ game if $p < p^\ast$ and Maker can win if $p > p^\ast$.
Monotonicity clearly implies the existence of $p^\ast$ and in our previous work~\cite{Dvorak2021-ad} we have shown that $p^\ast > 0.6298$.
More precisely, we showed that $p^\ast$ is at least the critical probability for the north-east percolation on $\ZZ^2$.
We now fully answer the question of Day and Falgas-Ravry.

\begin{theorem}
\label{thm:11trivial}
Almost surely Breaker has a winning strategy for the $(1,1)$ Maker-Breaker percolation game on $(\ZZ^2)_p$, for any $0 \leq p < 1$.
\end{theorem}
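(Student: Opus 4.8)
I would begin with two reductions. Since Maker selects $v_0$ only after the random board $(\ZZ^2)_p$ has been revealed, the statement that almost surely Breaker wins whatever $v_0$ Maker picks is a countable intersection, over $v_0 \in \ZZ^2$, of the events $E_{v_0} = \{\text{Breaker has a winning strategy with marked vertex } v_0\}$; it therefore suffices to show $\Prob(E_{v_0}) = 1$ for a single vertex, and by translation invariance we may take $v_0$ to be the origin $o$. We may moreover assume $1/2 < p < 1$, since for $p \le 1/2$ the board has no infinite component almost surely and Breaker wins trivially; and if $o$ fails to lie in the (almost surely unique) infinite cluster then Breaker again wins at once, so we may also assume that it does.

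Next I would pass to the dual. Write $P$ for the set of present edges and $A$ for the set of absent ones. A short planar-duality and compactness argument shows that Breaker wins if and only if he has a strategy ensuring that the limiting set $D \subseteq P$ of edges he claims satisfies: $D \cup A$ contains a dual cycle enclosing $o$. So the game is equivalent to the following. In the dual lattice Breaker is handed, for free, the set $A$ --- which, since $1 - p < 1/2$, is a \emph{subcritical} bond-percolation configuration --- and must complete an enclosing dual cycle, claiming one present edge per round; Maker, also claiming one edge per round and moving first, must keep at least one present edge unclaimed on every enclosing dual cycle forever. Since $o$ lies in the infinite primal cluster, every enclosing dual cycle meets $P$, so no cycle is already free.

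The combinatorial heart is to design Breaker's strategy. I would have him work outward through a nested sequence of dual annuli $\mathcal A_1, \mathcal A_2, \dots$ around $o$ at geometrically growing scales, maintaining inside the annulus currently under attack a growing \emph{partial barrier} --- an arc of free or Breaker-claimed dual edges --- which he tries to close into an enclosing cycle. The key deterministic assertion, which I expect to absorb essentially all of the work, is this: whenever an annulus $\mathcal A_k$ is sufficiently \emph{rich} --- its free dual edges plentiful and well spread --- Breaker can, working within $\mathcal A_k$ alone and against arbitrary Maker interference, close an enclosing cycle there; each time Maker blocks an intended step, Breaker reroutes his barrier at cost $O(1)$ using nearby free edges or shifts it to a parallel corridor of the annulus, all the while advancing, and the circuit topology of an annulus provides exactly the flexibility needed to do this uniformly. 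Granting this, Breaker wins as soon as his outward march reaches a rich annulus.

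The main obstacle is precisely this last point, and it explains why the earlier bound $p^\ast > 0.6298$ did not already settle the question. The naive approach --- have Breaker locate pairwise disjoint enclosing cutsets that are individually cheap, i.e.\ contain few present edges, and win a box-type game on them --- provably cannot reach $p$ close to $1$: a first-moment estimate shows that once $p > 1 - 1/\mu \approx 0.621$, where $\mu$ is the connective constant of $\ZZ^2$, there are almost surely only finitely many enclosing cutsets containing at most any prescribed number of present edges, so cheap cutsets are far too scarce to support an argument that pays for each of them separately. The new input must be that Breaker exploits the free-edge structure together with the two-dimensional connectivity of the annuli rather than paying cutset by cutset; turning that into a strategy with rerouting cost bounded uniformly in $p$, and then verifying --- by a multi-scale renormalisation and Borel--Cantelli argument, using only that each edge is absent with probability $1 - p > 0$ independently --- that for every $p < 1$ the annuli $\mathcal A_k$ are almost surely rich for infinitely many $k$, is the crux of the proof.
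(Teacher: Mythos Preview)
Your reduction to the dual picture and the diagnosis of why cheap-cutset counting fails above $p \approx 0.621$ are both correct, but the proposal has a genuine gap at exactly the point you flag as ``the combinatorial heart''. You assert that in a sufficiently rich annulus Breaker can close an enclosing cycle against arbitrary Maker interference, rerouting at cost $O(1)$ per block, but you give no mechanism for this, and in a $(1,1)$ game it is far from clear one exists: Maker has exactly as many moves as Breaker, so every edge Breaker spends on a reroute is matched by a Maker edge that can pre-emptively block the \emph{next} reroute. The ``circuit topology of an annulus'' does not by itself break this parity; one needs a concrete combinatorial structure that lets Breaker respond locally to each Maker move without falling behind globally, and your outline does not supply one. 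The multi-scale Borel--Cantelli wrapper is fine, but it only delivers infinitely many rich annuli --- it does nothing to win the game inside any one of them.

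The paper's argument is quite different and sidesteps the annulus game entirely. Rather than having Breaker actively build a barrier, it equips him with a \emph{pairing strategy} constructed before play begins: a vertex with at most two surviving useful neighbours can have its two incident useful edges paired, so that Breaker answers one with the other and the vertex becomes useless to Maker. Declaring such vertices ``red'' and iterating, the red set grows exactly according to $2$-neighbour bootstrap percolation on $\ZZ^2$, seeded by vertices whose north and east edges are both absent (probability $(1-p)^2 > 0$, independently). Van Enter's theorem then guarantees that every vertex eventually turns red, and a compactness argument yields a finite box around $v_0$ in which the pairing already traps Maker. The crucial point is that the pairing makes Breaker's play purely reactive and local --- no rerouting, no global accounting --- and bootstrap percolation is what propagates this local advantage to cover the whole plane.
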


The strategy of Breaker is to build a pairing on $(\ZZ^2)_p$ that guarantees that Maker cannot leave a large box.
We use bootstrap percolation to build such pairing in a dynamic fashion, which outperforms the static pairing employed in~\cite{Dvorak2021-ad}.

While Maker can indeed win the $(1,1)$ game on $\ZZ^2$, \Cref{thm:11trivial} shows it barely does so.
Whenever a positive proportion of the edges is randomly assigned to Breaker, she can no longer win.
In fact, the same proof extends to the $(1,d-1)$ game on $(\ZZ^d)_p$.

\begin{theorem}
\label{thm:1dtrivial}
Almost surely Breaker has a winning strategy for the $(1,d-1)$ Maker-Breaker percolation game on $(\ZZ^d)_p$, for any $0 \leq p < 1$ and $d \geq 2$.
\end{theorem}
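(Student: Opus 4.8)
The plan is to reduce the $d$-dimensional statement to the planar one already proved in \Cref{thm:11trivial}, rather than to rebuild the bootstrap-percolation machinery from scratch. The key observation is that Breaker, who has $d-1$ moves per turn against Maker's single move, can afford to ``spend'' one move in each of the $d-1$ coordinate directions orthogonal to a fixed plane. Concretely, fix a coordinate $2$-plane $P$ through $v_0$, say the plane spanned by the first two coordinate axes, and foliate $\mathbb{Z}^d$ into parallel copies of $\mathbb{Z}^2$ indexed by the remaining $d-2$ coordinates together with the ``vertical'' edges that connect consecutive layers. Breaker will play a product strategy: against Maker's move on an edge lying in one of the planar layers he responds with his planar pairing strategy inside that layer, and against a move on a transversal edge he simply claims a ``partner'' transversal edge so as to seal off the layer containing $v_0$ from its neighbours. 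The bookkeeping should be arranged so that Breaker maintains, for the layer $L_0 \ni v_0$, both (i) a Breaker-controlled pairing witnessing that Maker cannot escape a finite box \emph{within} $L_0$ (this is exactly the content and the construction of \Cref{thm:11trivial}), and (ii) that every transversal edge incident to the relevant finite box of $L_0$ has been claimed by Breaker before Maker could use it.

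First I would make precise the layering: write each vertex as $(x,y,z)$ with $z \in \mathbb{Z}^{d-2}$ and $(x,y) \in \mathbb{Z}^2$, call $L_z$ the planar layer at height $z$, and call an edge \emph{transversal} if it changes some $z$-coordinate. Since $v_0 \in L_0$, it suffices for Breaker to confine the $v_0$-component to a finite subset of $L_0$: then it is finite in all of $\mathbb{Z}^d$. Next I would recall that the proof of \Cref{thm:11trivial} produces, almost surely, a (random, dynamically revealed) pairing on the edges of $(\mathbb{Z}^2)_p$ such that Breaker, answering each Maker move on an edge by claiming its partner, keeps $v_0$ inside a finite region. I would run \emph{that} strategy on the single layer $L_0$ using one of Breaker's $d-1$ moves. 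The remaining $d-2$ moves are used for transversal edges: I would set up, again via a pairing (this is trivial in the transversal directions because each vertex of $L_0$ has exactly $2(d-2)$ transversal edges, which pair naturally as ``up'' and ``down'' in each of the $d-2$ directions), so that whenever Maker claims a transversal edge at a vertex of $L_0$, Breaker claims the opposite transversal edge at the same vertex. The point is that, by the time Breaker's planar strategy has committed to a finite confining box $B \subseteq L_0$, every vertex of $B$ has all its transversal edges either claimed by Breaker or paired with a Breaker-claimed edge, so Maker can never route the $v_0$-component out of $L_0$ through $B$'s boundary in the transversal direction. One must check the move-count: per round Maker plays $1$ edge, Breaker plays $d-1$; responding to a planar Maker-move costs $1$ Breaker move (from the planar strategy) and leaves $d-2$ free, responding to a transversal Maker-move costs $1$ and leaves $d-2$ free — in either case Breaker has enough moves, and the spare moves can be wasted on distant edges or used to ``pre-claim'' transversal partners, which only helps.

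The main obstacle is the interaction between the two pairings in time: the planar confining box $B$ is not known in advance — it is revealed dynamically as the bootstrap-percolation argument of \Cref{thm:11trivial} unfolds — so one cannot simply pre-claim all transversal edges of $B$ at the start. The fix is to observe that the planar strategy of \Cref{thm:11trivial} only ever \emph{reacts} to Maker: Maker cannot reach a new vertex $u \in L_0$ of the $v_0$-component without first claiming a planar edge of $L_0$ incident to an already-reached vertex, and Breaker's response to that move keeps the component confined; meanwhile, to exploit $u$ transversally Maker must separately claim a transversal edge at $u$, and Breaker answers \emph{that} move in the same round (he has spare capacity) by claiming the opposite transversal edge at $u$. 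Thus the transversal pairing is applied on a per-vertex, on-demand basis exactly as the planar one is, and no global foresight is needed; the two pairings have disjoint edge sets (planar versus transversal) so they never conflict. Formally, I would phrase Breaker's strategy as: maintain the planar pairing $\pi_0$ on $E(L_0)$ given by \Cref{thm:11trivial} and the canonical transversal pairing $\pi_\perp$ on the transversal edges; on each Maker move $e$, respond with the $\pi_0$- or $\pi_\perp$-partner of $e$ if that partner is unclaimed (and $e$ is a planar edge of $L_0$ or a transversal edge at a vertex already in the current $v_0$-component, respectively), and otherwise respond arbitrarily; all moves of Maker on edges not of these two types, or on layers $L_z$ with $z \neq 0$, are ignored since they cannot help Maker escape $L_0$. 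That Breaker wins then follows: the $v_0$-component stays inside the finite box guaranteed by \Cref{thm:11trivial} within $L_0$, and cannot leave $L_0$ at all. Since $p < 1$ is exactly the hypothesis under which \Cref{thm:11trivial} supplies the planar pairing almost surely, the same almost sure event suffices here, completing the proof.
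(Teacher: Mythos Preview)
Your reduction to the planar case does not work, and the gap is in the transversal pairing.  You pair the ``up'' edge $\{u,u+e_i\}$ with the ``down'' edge $\{u,u-e_i\}$ at each vertex $u\in L_0$.  But this pairing blocks nothing: if Maker claims $\{u,u+e_3\}$, then $u+e_3$ is now in the $v_0$-component regardless of what Breaker does, and in particular regardless of whether Breaker claims $\{u,u-e_3\}$.  A pairing only helps when any relevant winning structure must use \emph{both} partners; a self-avoiding path leaving $L_0$ uses exactly one transversal edge at $u$, not two.  So after a single Maker move on a transversal edge the component has escaped $L_0$, and your assertion that ``moves on layers $L_z$ with $z\neq 0$ can be ignored since they cannot help Maker escape $L_0$'' is simply false from that point on.  Maker can then build freely in $L_{e_3}$, where you have set up no confining strategy at all, and reach infinity.

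Running the planar strategy in every layer simultaneously does not rescue the argument either: the confining box in each layer depends on a choice of origin, and Maker gets to choose where to enter $L_z$, so you cannot fix the box in advance.  Moreover you still have no mechanism that limits how many layers Maker visits.  The paper's proof avoids all of this by working directly in $\mathbb{Z}^d$: it uses $d$-neighbour bootstrap percolation (Schonmann's extension of van~Enter) to show that the $(d+1)$-core of $(\mathbb{Z}^d)_p$ is almost surely empty, and builds clumps of size at most $d$ (not~$2$) at vertices of degree $\leq d$ in the iteratively-peeled graph.  These larger clumps are exactly what uses up Breaker's $d-1$ moves per turn; they are not the product of a planar pairing with a transversal one.
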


As we will see in \Cref{sec:heuristics}, remarkably little is known for $(m,b)$ games in $\ZZ^d$, $d \geq 3$.

For any integers $m,b \geq 1$ and an infinite connected graph $\Lambda$ we can define the threshold
\begin{equation*}
    \pcmb{(m,b)}(\Lambda)
    \defined \sup\set[\bigg]{ 0 \leq p \leq 1 \st \begin{array}{c}
        \text{almost surely Breaker has a winning } \\
        \text{strategy for the $(m,b)$ game on $(\Lambda)_p$ }
      \end{array} }.
\end{equation*}
The values of the thresholds $\pcmb{(m,b)}(\Lambda)$ can then be interpreted as different measures of connectivity of $\Lambda$ in an adversarial setting.
It is clear, for instance, that $\pcbond(\Lambda) \leq \pcmb{(m,b)}(\Lambda) \leq 1$ for every pair $(m,b)$.
Moreover, the following monotonicity relations hold
\begin{align*}
    \pcmb{(m,b)}(\Lambda) &\leq \pcmb{(m,b+1)}(\Lambda), \\
    \pcmb{(m+1,b)}(\Lambda) &\leq \pcmb{(m,b)}(\Lambda).
\end{align*}

\Cref{thm:1dtrivial} states that $\pcmb{(1,d-1)}(\ZZ^d) = 1$ for $d \geq 2$, so these thresholds are \indef{trivial}.
However, the behaviour at the critical point is not known for $d \geq 3$.
Indeed, whether or not Maker wins the $(1,d - 1)$ game on $\ZZ^d$ is still an open problem, see \Cref{sec:heuristics}.

Turning back to two dimensions, \Cref{thm:11trivial} in a nutshell means that Maker is not strong enough in the $(1,1)$ game on $\ZZ^2$ to win in a robust way.
This prompted us to give more power to Maker and look what happens for the $(2,1)$ game instead.
We prove the following result.

\begin{theorem}
\label{thm:21nontrivial}
Consider the $(2,1)$ Maker-Breaker percolation game on $(\ZZ^2)_p$.
If $p < 0.52784$ then almost surely Breaker has a winning strategy, and if $p > 0.94013$ then almost surely Maker has a winning strategy.
In other words,
\begin{equation*}
    0.52784 \leq \pcmb{(2,1)}(\ZZ^2) \leq 0.94013.
\end{equation*}
\end{theorem}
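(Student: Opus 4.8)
I need two independent arguments: a Breaker-win for small $p$ and a Maker-win for large $p$. Let me think about each.

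For the Breaker-win at $p < 0.52784$: Breaker has bias $b=1$, Maker has $m=2$. The natural approach is to find a Breaker strategy that works whenever the random board locally lacks enough connectivity, and then show this local condition holds a.s. when $p$ is small enough. Concretely, I would try to confine Maker inside a bounded region. One robust method: tile $\ZZ^2$ into blocks and declare a block "good" (for Breaker) if the edges inside and on the boundary of the block form a configuration where Breaker, using a *local* pairing or a bounded-depth strategy, can cut any path that tries to cross from the inner boundary to the outer boundary, even against a Maker who moves twice per round. Then Breaker plays independently in each block he needs. Since whatever vertex $v_0$ Maker picks lies in some block, and to survive Maker must escape to infinity through a sequence of blocks, it suffices that a.s. every infinite self-avoiding sequence of adjacent blocks starting at $v_0$'s block contains a good block (in fact a "good annulus" surrounding $v_0$). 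This is itself a percolation statement about the block process: the good-block event depends only on edges in a bounded neighborhood, has probability $\to 1$ as block size grows when $p$ is below a threshold, so a standard Peierls / exponential-decay argument gives an a.s. surrounding circuit of good blocks. The number $0.52784$ will come from optimizing the block size and the precise local Breaker condition, likely comparing to a known percolation or "protected polygon" threshold — this is the same philosophy as \cite{Day2021-sa} and \cite{Dvorak2021-ad}, adapted to bias $(2,1)$.

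For the Maker-win at $p > 0.94013$: now Maker is strong and the board is dense. The idea is a renormalization: tile $\ZZ^2$ into blocks and call a block "Maker-good" if the surviving edges contain a prescribed robust substructure — e.g. two edge-disjoint crossings in each direction, or more precisely a configuration on which Maker, with her two moves per round, can implement a *local doubling pairing* that guarantees an infinite Maker-claimed (or Breaker-free) path through the block no matter what Breaker does inside it. The key combinatorial input is a "gadget lemma": on a specified finite graph $H$ (the full block), if all edges of $H$ are present, Maker playing locally with bias $2$ against Breaker's bias $1$ can always maintain a connection across $H$; moreover the gadget should *tolerate* a few missing edges, so that "Maker-good" has probability $\to 1$ as block size grows when $p$ is close to $1$. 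Then a Maker-good block process dominates a supercritical site (or bond) percolation on $\ZZ^2$, a.s. producing an infinite cluster of Maker-good blocks; Maker starts $v_0$ in such a cluster (if $v_0$'s own block fails she is in trouble — so really one wants an *infinite good path from $v_0$*, which forces choosing $v_0$ inside the infinite good component, using that the good component is a.s. unique and its complement cannot separate $v_0$, or simply having Maker wait and pick $v_0$ appropriately). Maker then plays the within-block strategies in parallel across this infinite path, spending her two moves per round to respond to Breaker's single move wherever he plays. The value $0.94013$ is the outcome of choosing the gadget and the fault-tolerance margin and solving $\Prob(\text{block good}) > p_c^{\text{site}}(\ZZ^2)$ or the relevant supercriticality condition.

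The main obstacle is the $(2,1)$ gadget lemma on the Maker side: unlike the trivial $(1,1)$ pairing on full $\ZZ^2$, there is no obvious explicit local strategy by which bias-$2$ Maker beats bias-$1$ Breaker on a fault-tolerant finite board, and one must design a finite gadget, verify (possibly by a short case analysis or by exhibiting an explicit pairing-like strategy on an auxiliary multigraph) that Maker wins on it, and control how many edge deletions it survives — all while keeping the gadget small enough that $\Prob(\text{good}) > p_c^{\text{site}}(\ZZ^2)$ forces $p$ below $1$ by a concrete margin. A secondary subtlety on the Breaker side is handling the doubled Maker moves in the local cutting argument: Breaker's block strategy must anticipate that Maker can claim two edges toward escaping before Breaker responds, so the "good block" condition must be strong enough (e.g. a sufficiently wide annulus with bounded-degree crossings) that Breaker still wins the local confinement game; quantifying this tradeoff against the Peierls estimate is what pins down $0.52784$. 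Throughout, I would phrase both directions via the $(\Lambda)_p$ formalism of \Cref{sec:percolation} and reduce to standard exponential-decay / supercriticality facts so the probabilistic part is routine once the two gadget lemmas are in place.
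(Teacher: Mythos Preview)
Your approach is fundamentally different from the paper's, and as written it has a genuine gap.

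The paper does \emph{not} use block renormalization or gadget lemmas at all. Both bounds come from potential-based strategies in the Erd\H{o}s--Selfridge/Beck tradition. For the lower bound, Breaker plays on the hypergraph $\cP_{\ZZ^2,v_0}^N$ of self-avoiding walks of length $N$ from $v_0$; by Beck's criterion (the paper's \Cref{thm:beck-infinite}) Breaker wins the $(2,1)$ game whenever the total danger $\sum_P \lambda^{|P\cap \rU_\sigma|}$ is below $1/2$ with $\lambda = 2^{-1/2}$, and a first-moment computation shows this holds almost surely for some $N$ precisely when $p\kappa\lambda<1$, i.e.\ $p<\sqrt{2}/\kappa(\ZZ^2)$. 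For the upper bound, Maker plays as Breaker on the dual hypergraph $\cC_{v_0}$ of separating cycles, with $\lambda=1/3$; after reserving a ladder $L_{v_0}^N$ to block short dual cycles by an explicit triple-pairing, the same Beck analysis gives a Maker win whenever $(1-p)<3(1/\kappa-(1-p))$, i.e.\ $p>\tfrac{3}{2}(1-1/\kappa)$. Plugging in the P\"onitz--Tittmann bound $\kappa(\ZZ^2)\le 2.6792$ yields $0.52784$ and $0.94013$ exactly.

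Your renormalization sketch is not unreasonable as a way to get \emph{some} nontrivial bounds, but it cannot recover the stated constants: those numbers are analytic expressions in $\kappa(\ZZ^2)$, not outputs of a block optimization, so your sentence ``the number $0.52784$ will come from optimizing the block size'' is simply wrong. More seriously, you correctly identify your own main obstacle --- the $(2,1)$ Maker gadget lemma --- and then do not resolve it; designing a finite fault-tolerant board on which bias-$2$ Maker provably maintains a crossing against bias-$1$ Breaker is the entire content of the upper bound in your scheme, and without it you have no proof. On the Breaker side you face a similar issue: at $p>1/2$ the board is supercritical, so a ``good block'' where Breaker locally confines a doubled Maker is not obviously abundant, and you give no concrete local strategy. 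The paper sidesteps both difficulties by never working locally: the potential function is global, the strategy is greedy minimization, and the only probabilistic input is an expectation bound on $\sum_P (\lambda p)^{|P|}$ governed by the connective constant.
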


In particular, the threshold for the $(2,1)$ game in $\ZZ^2$ is \indef{non-trivial}.
To prove this result, we analyse \indef{potential-based} strategies, in which one of the players attempts to minimise some potential function defined on the space of game configurations.

In order to precisely state our next result, recall the following definition.
Given an infinite vertex-transitive graph $\Lambda$, let $\cP^n$ be the set of self-avoiding walks of length $n$ starting from a fixed vertex.
Since $\card{\cP^{n+m}} \leq \card{\cP^n} \card{\cP^m}$, we know that $\card{\cP^n}^{1/n}$ converges to a constant $\kappa = \kappa(\Lambda)$, known as the \indef{connective constant} of $\Lambda$.
The following result gives us a plethora of situations in which Breaker has a winning strategy, even when we only have an upper bound on $\kappa(\Lambda)$.

\begin{theorem}
\label{thm:potential-breaker}
Let $\Lambda$ be an infinite vertex-transitive graph and $\kappa$ be its connective constant.
Then almost surely Breaker has a winning strategy for the $(m,b)$ Maker-Breaker percolation game on $(\Lambda)_p$ whenever
\begin{equation}
\label{eq:mb-breaker-p}
  p < \frac{(b+1)^{1/m}}{\kappa}.
\end{equation}
\end{theorem}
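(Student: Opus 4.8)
\emph{Step 1: a reformulation.} The idea is to recast the percolation game as an ordinary biased Maker--Breaker game on a finite hypergraph of self-avoiding walks and then apply Beck's version of the Erd\H{o}s--Selfridge criterion. To this end I would start from the standard reformulation (cf.~\cites{Day2021-sa,Day2021-ua}): on any board $G$, Maker has a winning strategy for the $(m,b)$ game if and only if she can guarantee that the subgraph formed by the edges \emph{she} has claimed eventually contains an infinite self-avoiding walk from $v_0$. One direction is immediate, since edges claimed by Maker are never removed; for the converse, on top of any winning strategy for the percolation game Maker also greedily extends a committed path of her own edges by one edge each round, staying within the infinite component of $v_0$ that her strategy is anyway preserving. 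Writing $M$ for the (increasing) set of edges Maker has claimed, this says Maker wins exactly when she can keep $v_0$ in an infinite component of $M$. Since the percolation game is determined, to prove the theorem it is enough to exhibit, almost surely, a Breaker strategy that keeps the $v_0$-component of $M$ finite; and because $\Lambda$ is locally finite, it even suffices that Breaker prevents Maker from ever occupying all the edges of a self-avoiding walk of some single fixed length $N$ starting at $v_0$, since then the $v_0$-component of $M$ has radius below $N$.

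\emph{Step 2: choosing the length.} Put $\lambda \defined (b+1)^{1/m}$, so that the hypothesis becomes $p\kappa < \lambda$. Let $\cH_N$ denote the (random) set of those walks in $\cP^N$ that survive the percolation, where by vertex-transitivity $\card{\cP^N}$ is also the number of self-avoiding walks of length $N$ from $v_0$ in $\Lambda$. As $\card{\cP^N}^{1/N} \to \kappa$ I may fix $\eps > 0$ with $(\kappa+\eps)p < \lambda$ and then $\card{\cP^N} \le (\kappa+\eps)^N$ for all large $N$; since each walk in $\cP^N$ survives with probability $p^N$,
\begin{equation*}
  \Expec\Bigl[\,\sum_{N \ge 1} \card{\cH_N}\,\lambda^{-N}\,\Bigr] \;=\; \sum_{N \ge 1} \card{\cP^N}\,(p/\lambda)^N \;<\; \infty .
\end{equation*}
Hence the random series $\sum_N \card{\cH_N}\lambda^{-N}$ converges almost surely, so its terms tend to $0$, and therefore almost surely there is a (random) $N$ with $\card{\cH_N}\,\lambda^{-N} < \tfrac{1}{b+1}$; as there are only countably many vertices, almost surely such an $N$ can be found for every possible choice of root $v_0$. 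Fix such an $N$; note $\cH_N$ is a finite hypergraph (the statement being vacuous when $\kappa = \infty$).

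\emph{Step 3: Beck's theorem.} Consider now the biased Maker--Breaker game played on the finite hypergraph $\cH_N$, in which Maker claims $m$ edges and then Breaker claims $b$ edges in each round and Maker's aim is to occupy all edges of some $A \in \cH_N$ (claiming edges outside $\bigcup \cH_N$ is useless to her, so this is the relevant subgame). Beck's generalisation of the Erd\H{o}s--Selfridge theorem states that, with Maker moving first, Breaker has a winning strategy as soon as
\begin{equation*}
  \sum_{A \in \cH_N} (b+1)^{-\card{A}/m} \;=\; \card{\cH_N}\,\lambda^{-N} \;<\; \frac{1}{b+1},
\end{equation*}
for instance the potential strategy of always claiming an available edge $e$ that maximises $\sum_{A \ni e}(b+1)^{-(\card{A} - \card{A \cap M})/m}$, summed over those $A \in \cH_N$ through $e$ not yet containing a Breaker edge. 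By Step~2 this inequality holds almost surely, so Breaker can prevent Maker from occupying a self-avoiding walk of length $N$ from $v_0$, and by Step~1 Breaker almost surely wins the percolation game under the stated bound on $p$.

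\emph{Where the difficulty lies.} The probabilistic estimate is routine once the definition of $\kappa$ is unpacked, and Beck's theorem is used as a black box, so the step that demands care is the reformulation of Step~1: one must be sure that frustrating Maker's attempts to build an occupied walk genuinely denies her the percolation game, the worry being that Maker might stay connected to infinity through edges that are never claimed by anybody. This is exactly what the ``realisation'' argument rules out --- and implicitly it uses that the potential strategy of Step~3, when faced with a passive Maker, still walls $v_0$ in. A minor point to record is that Breaker is allowed to choose $N$ after seeing the board and Maker's vertex, which is fine because the board is revealed before play begins.
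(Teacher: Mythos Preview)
Your overall approach matches the paper's: reduce to a finite hypergraph of self-avoiding walks of a fixed length $N$, verify Beck's condition via a first-moment estimate, and invoke Beck's criterion. Steps~2 and~3 are essentially the paper's Proposition~5.1 (specialised to $\alpha=0$) together with its Theorem~4.2, and your convergent-series variant of the first-moment bound is a perfectly good alternative to the paper's Markov argument.

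Step~1, however, takes an unnecessary detour whose justification is incomplete. The equivalence you assert --- Maker wins the percolation game if and only if she can guarantee an infinite walk from $v_0$ inside her \emph{own} claimed edges --- has a nontrivial forward direction, and your sketch for it (``on top of any winning strategy Maker also greedily extends a committed path by one edge each round'') does not work as written: with only $m$ moves per round Maker cannot both execute her $(m,b)$ strategy in full and spend an additional move on path extension, and no straightforward monotonicity repairs this (the path move is an extra Maker edge, but that does not compensate for the strategy move she had to drop). The paper avoids the reformulation altogether. Since $\bigcup\cH_N$ is finite, Breaker can play Beck's strategy inside it, treating any Maker move outside as a pass and still claiming some edge of $\bigcup\cH_N$; after finitely many rounds every edge of $\bigcup\cH_N$ is claimed, and because Maker never fully occupied any walk, every $A\in\cH_N$ now contains a Breaker edge. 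Hence every open self-avoiding walk of length $N$ from $v_0$ is blocked, the component of $v_0$ in $(\Lambda)_p$ minus Breaker's edges has radius below $N$, and Breaker has won the percolation game outright. This is exactly the paper's Observation~4.3, and you already allude to it in your final paragraph --- making it the content of Step~1 removes the gap without changing anything in Steps~2 and~3.
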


The core idea of \Cref{thm:potential-breaker} is that we provide Breaker with an explicit potential function, called \indef{danger}, that measures how unfavourable a given game configuration is.
Breaker would then attempt to win by keeping the danger as low as possible throughout the game.
The difficulty in this endeavour is twofold: the definition of danger is only useful if it captures enough information from a game configuration, and on the other hand, the danger must be simple enough to be analysed.
The specific forms of the danger functions we use are described in \Cref{sec:potential}.

To prove \Cref{thm:potential-breaker}, we define an appropriate danger function and show that Breaker can win by claiming edges that minimise the danger as long as it is low enough when the game starts.
Finally, we show that the initial danger can be taken to be arbitrarily low whenever condition \eqref{eq:mb-breaker-p} holds.

We emphasise that potential-based strategies may also be used when the board is not random.
Indeed, \Cref{thm:potential-breaker} implies the following result.

\begin{corollary}
\label{cor:deterministic-breaker}
Let $\Lambda$ be an infinite vertex-transitive graph and $\kappa$ be its connective constant.
Then almost surely Breaker has a winning strategy for the $(m,b)$ Maker-Breaker percolation game on $\Lambda$ whenever $b > \kappa^m - 1$.
As a consequence,
\begin{enumerate}
    \item Breaker wins the $(1,1)$ and the $(2,3)$ game on the hexagonal lattice $\HH$;
    \item Breaker wins the $(1,4)$ and the $(2,18)$ game on the triangular lattice $\TT$;
    \item Breaker wins the $(1,2d-2)$ game on $\ZZ^d$ for $d \geq 2$.
\end{enumerate}
\end{corollary}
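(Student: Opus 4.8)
The plan is to derive \Cref{cor:deterministic-breaker} directly from \Cref{thm:potential-breaker} by taking $p = 1$, or rather by observing that the deterministic board $\Lambda$ is the almost sure realisation of $(\Lambda)_p$ as $p \to 1$. First I would note that if $b > \kappa^m - 1$, then $b + 1 > \kappa^m$, so $(b+1)^{1/m} > \kappa$, hence $(b+1)^{1/m}/\kappa > 1$. Thus condition \eqref{eq:mb-breaker-p} holds for $p = 1$ (indeed for all $p \leq 1$), and \Cref{thm:potential-breaker} gives that Breaker almost surely wins the $(m,b)$ game on $(\Lambda)_1 = \Lambda$. Since $(\Lambda)_1 = \Lambda$ deterministically, ``almost surely'' here just means Breaker has a winning strategy on $\Lambda$ itself. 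One small point to address: the theorem as stated assumes $p < (b+1)^{1/m}/\kappa$ with a strict inequality, so to include $p = 1$ one should either invoke monotonicity of the thresholds $\pcmb{(m,b)}$ (so that Breaker winning on $(\Lambda)_p$ for $p$ arbitrarily close to $1$ forces Breaker to win on $\Lambda$), or simply remark that the danger-based argument behind \Cref{thm:potential-breaker} works verbatim when no edges are removed. I would phrase it via the former: for any $p$ with $1 \geq p > \kappa \cdot (\text{something} < 1)$ wait—more carefully, since $(b+1)^{1/m}/\kappa > 1$, we may pick $p < 1$ still satisfying \eqref{eq:mb-breaker-p}; Breaker wins on $(\Lambda)_p$ a.s., and coupling $(\Lambda)_p \subseteq \Lambda$ shows Breaker wins on $\Lambda$ a fortiori (fewer edges only help Breaker, as deleting a Maker-claimed edge can only shrink Maker's component). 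This coupling remark is the one genuinely non-formal step and deserves a sentence.

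For the three itemised consequences, the work reduces to supplying rigorous upper bounds on the connective constants and checking the arithmetic. For the hexagonal lattice $\HH$, the connective constant is known exactly to be $\kappa(\HH) = \sqrt{2 + \sqrt{2}} \approx 1.8478$ by the theorem of Duminil-Copin and Smirnov; I would cite this. Then $\kappa(\HH)^1 - 1 \approx 0.848 < 1$, so $b = 1 > \kappa(\HH) - 1$ and Breaker wins the $(1,1)$ game; and $\kappa(\HH)^2 - 1 \approx 2.414 < 3$, so $b = 3 > \kappa(\HH)^2 - 1$ and Breaker wins the $(2,3)$ game. For the triangular lattice $\TT$, no closed form is known, but rigorous bounds give $\kappa(\TT) < 4.2515$ (one can cite the rigorous upper bounds in the literature, e.g. those of Alm or subsequent refinements); then $\kappa(\TT) - 1 < 3.2515 < 4$ yields the $(1,4)$ game, and $\kappa(\TT)^2 - 1 < 18.08 - 1 = 17.08$—here I need a slightly better bound: $4.2515^2 \approx 18.08$, so I would instead use $\kappa(\TT) < 4.2485$, giving $\kappa(\TT)^2 < 18.05$, still not below $19$; actually $b = 18 > \kappa^2 - 1$ needs $\kappa^2 < 19$, i.e. $\kappa < 4.3589$, which the known bound $\kappa(\TT) \leq 4.2515$ comfortably satisfies — so the $(2,18)$ game follows. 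For $\ZZ^d$, the trivial bound $\kappa(\ZZ^d) \leq 2d - 1$ (each step of a self-avoiding walk has at most $2d-1$ choices since it cannot immediately backtrack) gives $\kappa(\ZZ^d)^1 - 1 \leq 2d - 2 < 2d - 2$? — no, equality, so I need a strict bound; but $\kappa(\ZZ^d) < 2d-1$ strictly for all $d \geq 2$ (the walk is further constrained), hence $b = 2d - 2 > \kappa(\ZZ^d) - 1$ and Breaker wins the $(1, 2d-2)$ game.

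The main obstacle, such as it is, is purely bookkeeping: ensuring the cited numerical upper bounds on $\kappa(\TT)$ and $\kappa(\ZZ^d)$ are strict and strong enough for the claimed $(m,b)$ pairs, since \Cref{thm:potential-breaker} requires a strict inequality in \eqref{eq:mb-breaker-p}. Concretely I would: (i) state $\kappa(\HH) = \sqrt{2+\sqrt2}$ and $\kappa(\TT) \leq 4.2515$ with citations; (ii) verify $1 > \kappa(\HH) - 1$, $3 > \kappa(\HH)^2 - 1$, $4 > \kappa(\TT) - 1$, $18 > \kappa(\TT)^2 - 1$, and $2d - 2 > \kappa(\ZZ^d) - 1$ for $d \geq 2$; (iii) for each, apply \Cref{thm:potential-breaker} with the corresponding $(m,b)$ at some $p < 1$ satisfying \eqref{eq:mb-breaker-p} — which exists precisely because the right-hand side exceeds $1$ — and then pass from $(\Lambda)_p$ to $\Lambda$ by the monotone coupling. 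I do not expect any step to be genuinely difficult; the content of the corollary is entirely in \Cref{thm:potential-breaker}, and this is its routine specialisation.
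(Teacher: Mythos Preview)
Your approach is correct and essentially identical to the paper's: the corollary is deduced directly from \Cref{thm:potential-breaker} by noting that $b > \kappa^m - 1$ gives $(b+1)^{1/m}/\kappa > 1$, and then plugging in the known values or bounds $\kappa(\HH) = \sqrt{2+\sqrt{2}}$, $\kappa(\TT) \leq 4.278$ (Alm), and $\kappa(\ZZ^d) < 2d-1$ strictly. One unnecessary detour in your write-up: since $(b+1)^{1/m}/\kappa > 1$, the value $p = 1$ already satisfies the strict inequality \eqref{eq:mb-breaker-p}, so there is no need for the monotone-coupling argument to pass from $(\Lambda)_p$ with $p < 1$ back to $\Lambda$ --- just apply \Cref{thm:potential-breaker} at $p = 1$ directly, where $(\Lambda)_1 = \Lambda$ deterministically.
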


To get the specific consequences above, we used the known value of $\sqrt{2 + \sqrt{2}}$ for the connective constant of the hexagonal lattice, as famously determined by Duminil-Copin and Smirnov~\cite{Duminil-Copin2012-jj}; the upper bound $\kappa(\TT) \leq 4.278$ from Alm~\cite{Alm1993-tn}; and the fact\footnote{For the strict inequality, see Fisher and Sykes~\cite{Fisher1959-nt}*{Appendix A}.
It is also known that $\kappa(\ZZ^d) = 2d - 1 - 1/d + O(1/d^2)$.
In fact, much more precise asymptotics are available.
See the book of Madras and Slade~\cite{Madras2013-lc} for much more on self-avoiding random walks.} that $\kappa(\ZZ^d) < 2d - 1$.
While Day and Falgas-Ravry~\cite{Day2021-ua} have already determined that Breaker wins the $(1,2)$ game on $\ZZ^2$, \Cref{cor:deterministic-breaker} gives an alternative winning strategy.

We remark that Breaker still has a winning strategy under the conditions of \Cref{thm:potential-breaker} even when Maker can claim an arbitrarily large number of edges in the first turn.
This so-called \indef{boosted} game was introduced to avoid a quick win of Breaker near the origin.
As a consequence, Breaker also wins the boosted version of the games mentioned in \Cref{cor:deterministic-breaker}.

Erd\H{o}s and Selfridge~\cite{Erdos1973-gg} were the first to use potential functions to analyse Maker-Breaker games.
Following their seminal contribution, potential-based strategies have become one of the main tools in the study of positional games.
Their analysis was later extended by Beck~\cite{Beck1982-rh} to biased games, who provided a potential-based strategy for a general Maker-Breaker game.
In the analysis of our potential-based strategies, we are essentially using the result of Beck's.
However, we had to extend the range of applicability to cover infinite boards and boosted games.
Our simple extension of Beck's result is stated formally in \Cref{sec:potential}.

The lower bound on \Cref{thm:21nontrivial} is obtained with the same potential-based strategy.
However, we need some extra work to protect the origin from very short dual cycles.

\begin{theorem}
\label{thm:potential-maker}
Let $\kappa$ be the connective constant of $\ZZ^2$.
Then almost surely Maker has a winning strategy for the $(m,1)$ Maker-Breaker percolation game on $(\ZZ^2)_p$ whenever
\begin{equation}
\label{eq:m1maker-p}
  \frac{m+1}{m}\p[\Big]{1 - \frac{1}{\kappa}} < p \leq 1.
\end{equation}
\end{theorem}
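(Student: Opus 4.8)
The plan is to dualise and let Maker run an Erd\H{o}s--Selfridge/Beck-type blocking strategy. Writing $R$ for the (random) set of edges removed before play, the removal of Breaker's edges disconnects $v_0$ from infinity precisely when the dual edges of $R$ together with the dual edges of Breaker's claimed edges contain a cycle enclosing $v_0$. Thus the $(m,1)$ game on $(\ZZ^2)_p$ is a biased Maker--Breaker game on the edge set of $\ZZ^2$ with family of winning sets $\cC(v_0) = \set{C : C \text{ a dual cycle enclosing } v_0}$, in which the original Breaker is a ``maker'' claiming one edge per round, the original Maker is a ``blocker'' claiming $m$ edges per round and moving first, and the maker starts already owning $R$. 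I would set $\lambda \defined m+1$ and track the potential
\begin{equation*}
  \Phi \defined \sum_{\substack{C \in \cC(v_0)\\ C \text{ not met by Maker}}} \lambda^{-\abs{C \cap U}},
\end{equation*}
where $U$ is the set of currently unclaimed edges.

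Next I would invoke the version of Beck's blocking lemma recorded in \Cref{sec:potential}: if ever $\Phi < 1$, then Maker, by always claiming the $m$ edges of largest danger $w(e) \defined \sum_{C \ni e,\ C \text{ not met by Maker}} \lambda^{-\abs{C \cap U}}$, keeps $\Phi$ non-increasing thereafter. The per-round estimate is the standard one: Maker's $m$ greedy moves lower $\Phi$ by at least $mD'$, where $D'$ is the largest danger surviving her moves, while Breaker's single move raises $\Phi$ by at most $(\lambda-1)D' = mD'$, so the net change is at most $(\lambda - 1 - m)D' = 0$. For the infinite board one checks that the greedy move is well defined (only finitely many edges have danger above any given level, as $w(e) \to 0$ when $e$ recedes from $v_0$) and that a Breaker win forces $\Phi \geq 1$ (a fully Breaker-owned winning set not met by Maker contributes $\lambda^0 = 1$). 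Hence it suffices to show that almost surely Maker can pick $v_0$ with
\begin{equation*}
  \Phi_0 = \sum_{C \in \cC(v_0)} \lambda^{-f(C)} < 1, \qquad f(C) \defined \abs{C \setminus R};
\end{equation*}
as a bonus $\Phi_0 < 1$ forces $v_0$ into the infinite cluster, since otherwise some $C$ has $f(C)=0$.

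For this last point, let $\pi_n$ be the number of dual cycles of length $n$ enclosing a fixed vertex, so $\pi_n^{1/n} \to \kappa$, and note that the hypothesis $\tfrac{m+1}{m}\p{1 - \tfrac1\kappa} < p$ is exactly $\kappa \rho < 1$ with $\rho \defined (1-p) + \tfrac p\lambda$. Fix a large radius $L$ and split $\cC(v_0)$ into the cycles contained in the ball $B_L(v_0)$ and those that are not (the latter having length $> L$). If $B_L(v_0)$ is entirely open, the first group contributes to $\Phi_0$ at most $\sigma \defined \sum_n \pi_n \lambda^{-n}$; verifying $\sigma < 1$ is precisely the extra work of protecting the origin from very short dual cycles, and it is where $m \geq 2$ enters, since $\sigma$ converges exactly because $\kappa < 3 \leq m+1$. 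Because $\lambda^{-1} \leq \rho$, conditioning on $B_L(v_0)$ being open does not increase the expected weights of the second group, so its conditional expectation is at most $\sum_{n > L} \pi_n \rho^n$, which tends to $0$ as $L\to\infty$ since $\kappa\rho < 1$. Choosing $L$ with $\sum_{n > L}\pi_n\rho^n < \tfrac12(1-\sigma)$, the event ``$B_L(v_0)$ is open and the second-group contribution is $< 1-\sigma$'' has positive probability (a positive local probability times a Markov factor $\geq \tfrac12$); as ``some vertex works'' is a translation-invariant event and bond percolation is ergodic, almost surely such a $v_0$ exists, whence $\Phi_0 < 1$ and Maker wins by the blocking strategy above.

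The crux, and the one step I expect to demand real care rather than bookkeeping, is the inequality $\sigma < 1$: the crude Peierls bound $\pi_n \leq \tfrac n2 \kappa^{n-1}$ is useless here, since $\sum_n \tfrac n2 \kappa^{n-1}(m+1)^{-n}$ already exceeds $1$ for $m=2$, so one needs the exact (and individually tiny) values of $\pi_n$ for small $n$ together with a genuine geometric tail bound built from a rigorous upper bound on $\kappa(\ZZ^2)$. By contrast, the ergodicity argument selecting a good $v_0$ and the extension of Beck's lemma to infinitely many winning sets are comparatively soft.
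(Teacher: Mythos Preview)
Your overall framework---dualise, treat Maker as the blocker in a Beck-type game on the dual-cycle hypergraph, and locate a $v_0$ with initial potential below $1$---matches the paper's, as does the per-round potential estimate and the translation-invariance argument for selecting $v_0$. The divergence is entirely in how short dual cycles are handled, and here your route leaves a real gap: you need the deterministic inequality $\sigma=\sum_n\pi_n(m{+}1)^{-n}<1$, which you have not proved. You correctly note that crude bounds fail (submultiplicativity gives only $c_n\geq\kappa^n$, the wrong direction, while the trivial $c_n\leq 4\cdot 3^{n-1}$ makes the tail diverge at $\lambda=3$), so a rigorous argument would require an explicit estimate $c_n\leq C\mu^n$ with $\mu<3$ valid for all $n$, together with enough exact values of $\pi_n$ to absorb the constant $C$. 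This is numerically plausible but is a substantial computation you have not carried out.

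The paper sidesteps this entirely with a device you are missing: it plants a $2\times(N{+}1)$ ladder $L_{v_0}^N$ at $v_0$ and has Maker run two strategies in parallel. On the ladder she plays a reactive \emph{protective game}---claim one fixed rung first, partition the remaining ladder edges into triples, and whenever Breaker touches a triple claim the other two---which guarantees Breaker never cuts the ladder. On the complement she runs the potential strategy. Any dual cycle Breaker could then complete must encircle all of $L_{v_0}^N$ and hence has length exceeding $N$, so the potential runs only over cycles of length $\geq N$; its expectation is bounded by the tail $\sum_{n\geq N} n(\kappa+\eps)^n\rho^n$, driven below any threshold by taking $N$ large. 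No inequality like $\sigma<1$ is ever needed, and the only local requirement on $v_0$ is that $L_{v_0}^N$ be initially open. The ladder trick is precisely the idea that converts your hard quantitative step into a soft pairing argument.
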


Providing a winning strategy for Maker has proven to be a delicate enterprise.
One reason for this distinction may be that Breaker can win a percolation game in finite time, while Maker only wins by indefinitely avoiding a victory of Breaker.
In fact, to prove \Cref{thm:potential-maker} we need the aforementioned extension of Beck's result to infinite boards.
For many technical reasons, the range of applicability of \Cref{thm:potential-maker} is considerably reduced compared to \Cref{thm:potential-breaker}.
Nonetheless, the proof of \Cref{thm:potential-maker} can be adapted, in principle, to other planar lattices and to pairs $(m,b)$ with $b > 1$.

We note that \Cref{thm:potential-maker} can also be used to show that Maker wins on a deterministic board.
For example, having $\kappa(\ZZ^2) < 3$ implies that Maker wins the $(2,1)$ game on $\ZZ^2$.
While this has been previously determined by Day and Falgas-Ravry~\cite{Day2021-ua}, \Cref{thm:potential-maker} gives an alternative winning strategy for Maker.

\subsection{Phase diagrams for percolation games}
\label{subsec:phase-diagrams}

Consider a Maker-Breaker game where the board is the edge set of the complete graph $K_n$ and the winning sets are a collection of subgraphs of $K_n$.
This is the case for many well studied games in the literature, such as the connectivity game, the Hamiltonicity game and the $H$ game\footnote{These are the Maker-Breaker games played on the edge set of $K_n$ where the winning sets are, respectively, all the spanning trees, all the Hamiltonian cycles and all copies of $H$ in $K_n$.}.
For many of these games, it turns out that Maker wins rather easily when $(m,b) = (1,1)$.
A natural question arises: can Maker still win if the game is modified in way to make her objective slightly harder?

There are several different ways to alter a Maker-Breaker game in order to decrease the advantage of Maker.
One way is to provide Breaker with more moves in each turn, as in biased $(1,b)$ games.
This direction, originally proposed by Chvátal and Erd\H{o}s~\cite{Chvatal1978-hm}, invites the natural question of determining the value of the \indef{threshold bias}, that is, the integer $b^\ast$ such that Maker wins the $(1,b)$ game if $b \leq b^\ast$ and Breaker wins the $(1,b)$ game if $b > b^\ast$.
The existence of $b^*$ is guaranteed by bias monotonicity and the fact that Breaker wins if $b = \binom{n}{2}$.

Another way to decrease Maker's advantage is randomly assigning board elements to Breaker at the beginning of the game.
In this direction, Stojaković and Szabó~\cite{Stojakovic2005-bu} considered games played on random boards.
Perhaps the most notable example of a random board is the binomial random graph $G(n,p)$, in which one can think of the missing edges as given to Breaker.
Now the natural parameter to consider is the threshold $p^\ast$ such that with high probability Breaker has a winning strategy for the $(1,1)$ game on $G(n,p)$ if $p = o(p^\ast)$, and with high probability Maker wins the $(1,1)$ game on $G(n,p)$ if $p = \omega(p^\ast)$.
The existence of $p^\ast$ is guaranteed by the threshold theorem of Bollobás and Thomason~\cite{Bollobas1987-fk}, as the property `Maker has a winning strategy for the $(1,1)$ game on $G(n,p)$' is monotone and non-trivial.

In the study of positional games the probabilistic viewpoint has proven to be useful.
For certain games, a strategy where Maker claims board elements randomly has shown to be essentially optimal.
In a $(1,b)$ game a $1/(b+1)$ proportion of the edges are claimed by Maker, so by playing randomly, the graph spanned by Maker's edges at the end of the game is similar to $G(n,1/(b+1))$.
If this strategy is optimal, this suggest that $1/(b^\ast+1)$ is of the same order of magnitude as the threshold of the appearance of a winning set in $G(n,p)$.
This \indef{probabilistic intuition} has been rigorously established for many games.
For instance, Gebauer and Szabó~\cites{Gebauer2009-ha} have shown that it holds for the connectivity game, while Krivelevich~\cite{Krivelevich2011-rv} established the same for the Hamiltonicity game.
However, this prediction fails for other games, such as the $H$ game for a fixed graph $H$, as shown by Bednarska and {\L}uczak~\cite{Bednarska2000-gs}.

As previously discussed, even though Maker wins the $(1,1)$ Maker-Breaker percolation game on $\ZZ^2$, her strategy is quite fragile.
Indeed, we know that Breaker wins the $(1,2)$ game on $\ZZ^2$ and that, from \Cref{thm:11trivial}, Breaker wins the $(1,1)$ game on $(\ZZ^2)_p$ for all $0 \leq p < 1$.
In this scenario, it also makes sense to consider ways to increase the power of Maker.
In fact, we have already done so in \Cref{thm:21nontrivial} where we address the $(2,1)$ game on $(\ZZ^2)_p$.
On the other hand, randomly assigning edges to Maker in the beginning of the game has not been considered in the literature before.

For $\alpha, \beta \geq 0$, $\alpha + \beta \leq 1$, let $(\Lambda)_{\alpha,\beta}$ be the random configuration where every edge of $\Lambda$ is independently given to Maker with probability $\alpha$, to Breaker with probability $\beta$, and remains unclaimed with probability $1 - \alpha - \beta$.
When we refer to the $(m,b)$ game on $(\Lambda)_{\alpha,\beta}$ we are considering the Maker-Breaker percolation game on $\Lambda$ where the initial configuration is drawn from the distribution $(\Lambda)_{\alpha,\beta}$.
As before, Maker can choose $v_0$ after the initial configuration is determined.
See \Cref{sec:percolation} for detailed definitions.

Let us consider the simplest case of the $(1,1)$ game on $\ZZ^2$.
As Maker wins on $(\ZZ^2)_{0,0}$, by monotonicity, she wins almost surely on $(\ZZ^2)_{\alpha,0}$ for all $0 \leq \alpha \leq 1$.
By \Cref{thm:11trivial} we know that almost surely Breaker wins on $(\ZZ^2)_{0,\beta}$ for all $0 < \beta \leq 1$.
If $\alpha + \beta = 1$ then there are no edges left unclaimed and the game ends on round zero.
Indeed, on $(\ZZ^2)_{p,1-p}$, almost surely Maker wins if $1/2 < p \leq 1$ and almost surely Breaker wins if $0 \leq p \leq 1/2$.
In fact, this is equivalent to the theorems of Harris and Kesten.

\begin{figure}[ht!]
\centering
\begin{tikzpicture}[thick, scale=1.0]
\begin{axis}[
    width=10em,height=10em,
    scale only axis, axis equal,
    axis x line* = bottom,
    axis y line* = left,
    axis line style={draw=none},
    ymin=-0.04, ymax=1.04,
    xmin=-0.04, xmax=1.04,
    xtick = {0, 1/2, 1},
    xticklabels = {0, 1/2, 1},
    ytick = {0, 1/2, 1},
    yticklabels = {0, 1/2, 1},
    xlabel=$\alpha$,
    ylabel=$\;\;\beta$,
    every axis x label/.style={at={(current axis.right of origin)},anchor=west},
    every axis y label/.style={at={(current axis.north west)},anchor=south},
]
\path[fill=red!20] (0,0.5) -- (0,0.2535) -- (0.24649,0.5) -- (0.24649,0.6) -- (0,0.6) -- cycle;
\path[fill=blue!20] (0.5,0) -- (0.2535,0) -- (0.5,0.24649) -- (0.6,0.24649) -- (0.6, 0) -- cycle;
\draw [ultra thick, blue] (0,0) -- (1,0);
\draw [ultra thick, red] (0,0) -- (0,1);
\filldraw[ultra thick, draw=red, fill=red!20] (0,0.5) -- (0,1) -- (0.5,0.5) -- (0.24649,0.5);
\filldraw[ultra thick, draw=blue, fill=blue!20] (0.5,0) -- (1,0) -- (0.5,0.5);
\filldraw[blue] (0,0) circle (0.2em);
\filldraw[red] (0.5,0.5) circle (0.2em);
\end{axis}
\end{tikzpicture}
\caption{The known regions of the phase diagram for the $(1,1)$ game on $(\ZZ^2)_{\alpha,\beta}$. Almost surely, Breaker wins for points coloured red and Maker wins for points coloured blue.}
\label{fig:phase11}
\end{figure}
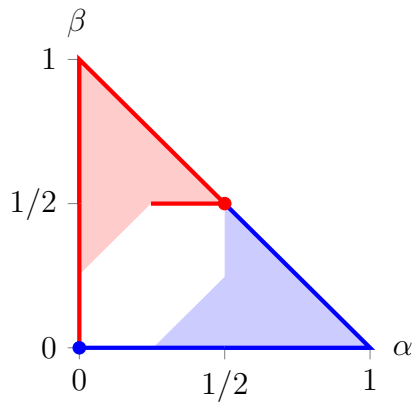

So far we have determined who wins the $(1,1)$ game on $(\ZZ^2)_{\alpha,\beta}$ only at the boundary of the triangle $\rT \defined \set{\alpha, \beta \geq 0 \st \alpha + \beta \leq 1}$.
The event `Maker has a winning strategy for the $(m,b)$ game' is translation invariant as Maker can choose $v_0$.
Therefore, Kolmogorov's zero--one law implies that either Maker or Breaker has a winning strategy almost surely.
Thus, there is a partition $\rT = \rM \sqcup \rB$ such that Maker wins the $(1,1)$ game on $(\ZZ^2)_{\alpha,\beta}$ if and only if $(\alpha,\beta) \in \rM$.
We represent this partition graphically in a \indef{phase diagram} by colouring points in $\rM$ in blue and points in $\rB$ in red.
See for instance \Cref{fig:phase11} for the know regions of the phase diagram for the $(1,1)$ game on $(\ZZ^2)_{\alpha,\beta}$.

In a phase diagram, we have the following monotonicity relations:
\begin{alignat*}{2}
    (\alpha, \beta) \in \rM &\implies (\alpha', \beta) \in \rM && \qquad \text{ whenever $\alpha' \geq \alpha$ and $(\alpha', \beta) \in \rT$}, \\
    (\alpha, \beta) \in \rB &\implies (\alpha, \beta') \in \rB && \qquad \text{ whenever $\beta' \geq \beta$ and $(\alpha, \beta') \in \rT$}.
\end{alignat*}
Thus, if we define $\varphi \from [0,1] \to [0,1]$ as $\varphi(\alpha) \defined \sup \set[\big]{ \beta \geq 0 \st (\alpha, \beta) \in \rM}$, then $\varphi$ is monotone non-decreasing.
While $\varphi$ is may not be continuous, it can only have jump discontinuities.
Therefore, the graph of $\varphi$ extends to a continuous curve in $\rT$ that separates $\rM$ and $\rB$ called the \indef{phase boundary}.

For the $(1,1)$ game on $(\ZZ^2)_{\alpha,\beta}$ the monotonicity relations above imply that the triangle $\set{ (\alpha, \beta) \in \rT \st \alpha > 1/2}$ is contained in $\rM$ and that the triangle $\set{ (\alpha, \beta) \in \rT \st \beta \geq 1/2}$ is contained in $\rB$.
While we are unable to determine the separating curve for this game, we are able to extend the aforementioned triangles to larger subregions of $\rM$ and $\rB$.
We obtain the regions of the phase diagram in \Cref{fig:phase11} via the following two theorems.

\begin{theorem}
\label{thm:potential-breaker-symmetric}
Let $\Lambda$ be an infinite vertex-transitive graph with connectivity constant bounded above by $\kappa$.
Then almost surely Breaker has a winning strategy for the boosted $(m,b)$ Maker-Breaker percolation game on $(\Lambda)_{\alpha,\beta}$ whenever
\begin{equation}
\label{eq:mb-breaker-ab}
  (1 - \alpha - \beta) < (b+1)^{1/m}\p[\Big]{\frac{1}{\kappa} - \alpha}.
\end{equation}
\end{theorem}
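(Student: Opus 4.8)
The plan is to reduce \Cref{thm:potential-breaker-symmetric} to \Cref{thm:potential-breaker} by a coupling argument that absorbs Maker's pre-assigned edges into a modified game. Observe that in the configuration $(\Lambda)_{\alpha,\beta}$, the edges given to Maker at the start are simply edges that Maker never needs to worry about defending; they are always ``safe'' from Breaker. The key idea is that an edge pre-assigned to Maker behaves, from Breaker's point of view, exactly like an edge that Maker has already claimed in an ordinary game. So I would first set up the following comparison: condition on the set $A$ of edges given to Maker, and consider the ordinary boosted $(m,b)$ game on the board obtained from $\Lambda$ by performing bond percolation with a suitable parameter $p'$ on the edges \emph{not} in $A$, where the edges in $A$ are kept. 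Then Breaker plays as if he were in this ordinary game, treating all edges of $A$ as pre-claimed by Maker in the boost round. The point is that a danger function as in \Cref{thm:potential-breaker} can be defined relative to $A$: self-avoiding walks are weighted so that edges in $A$ contribute a factor of $1$ rather than a factor that Breaker can diminish, which is exactly the effect of Maker owning them.

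The second step is the probabilistic estimate. The danger of the initial configuration is (up to the usual manipulations in the proof of \Cref{thm:potential-breaker}) controlled by a sum over self-avoiding walks $\gamma$ of a product over the edges of $\gamma$, where each edge contributes roughly: $1$ if it lies in $A$, $0$ if it lies in the removed set, and some weight $w$ (depending on $m$, $b$) if it is a free edge present in the board. Taking expectations over the random assignment, an edge of $\gamma$ contributes in expectation $\alpha \cdot 1 + \beta \cdot 0 + (1-\alpha-\beta) \cdot w$. For the danger to be finite (indeed, arbitrarily small after restricting to walks leaving a large box), one needs the per-edge expected weight times $\kappa$ to be strictly less than $1$, i.e. $\kappa\bigl(\alpha + (1-\alpha-\beta)w\bigr) < 1$. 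In the proof of \Cref{thm:potential-breaker} the relevant weight satisfies $w = (b+1)^{-1/m}$ (this is what makes condition \eqref{eq:mb-breaker-p} read $p\kappa < (b+1)^{1/m}$, i.e. $p < (b+1)^{1/m}/\kappa$ after the rescaling), so here the condition becomes
\begin{equation*}
  \kappa\p[\Big]{\alpha + \frac{1 - \alpha - \beta}{(b+1)^{1/m}}} < 1,
\end{equation*}
which rearranges precisely to \eqref{eq:mb-breaker-ab}. So the main content is to verify that the proof of \Cref{thm:potential-breaker} goes through verbatim with the edges of $A$ inserted as ``Maker-owned'' from the start, and that the danger computation produces exactly this per-edge weight.

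The main obstacle I anticipate is making rigorous the claim that pre-assigned Maker edges can be treated identically to edges Maker claims during the boost round, and ensuring that the boosted formulation is essential here: since $A$ may be infinite and may, for instance, disconnect the origin's would-be escape routes in awkward ways, one cannot literally have Maker ``claim'' all of $A$ in one move. Instead, the danger function itself must be \emph{defined} with $A$ built into the edge weights, and then one reruns Beck-type potential analysis (the extension stated in \Cref{sec:potential}) to show Breaker keeps this $A$-dependent danger bounded. A second, more technical point is a measurability/Fubini issue: the danger is a random variable depending on the whole configuration $(\Lambda)_{\alpha,\beta}$, and to conclude ``almost surely the initial danger is finite'' one should take expectations, show the expected danger over walks leaving a box $[-n,n]^2$ tends to $0$ as $n \to \infty$ under \eqref{eq:mb-breaker-ab}, and then Borel--Cantelli gives that almost surely, for some $n$, all self-avoiding walks from the origin leaving $[-n,n]^2$ carry negligible danger — which is exactly the hypothesis needed to invoke the winning criterion. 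The remaining steps (Breaker's move selection minimising danger, the invariance of boundedness under a full round) are identical to \Cref{thm:potential-breaker} and need only be cited.
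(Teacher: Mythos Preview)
Your proposal is correct and follows essentially the same route as the paper: define the danger so that Maker-owned edges contribute weight $1$ and Breaker-owned edges weight $0$, compute the per-edge expected weight as $\alpha + (1-\alpha-\beta)(b+1)^{-1/m}$, and require this times $\kappa$ to be less than $1$. The paper does this directly (Proposition~5.1) rather than framing it as a reduction to \Cref{thm:potential-breaker}, uses walks of fixed length $N$ rather than walks leaving a box, and concludes via Markov's inequality plus a countable union over vertices rather than Borel--Cantelli; but these are cosmetic differences, and your anticipated ``main obstacle'' dissolves exactly as you suspect once the danger is defined on game configurations.
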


Note that \Cref{thm:potential-breaker-symmetric} extends \Cref{thm:potential-breaker} as the latter can be obtained by taking $\alpha = 0$ and no boost.
Similarly, the next theorem extends \Cref{thm:potential-maker}. 

\begin{theorem}
\label{thm:potential-maker-symmetric}
Let $\kappa$ be the connective constant of $\ZZ^2$, let $\alpha, \beta \geq 0$, $\alpha + \beta \leq 1$, and $m \geq 2$.
Then almost surely Maker has a winning strategy for the $(m,1)$ Maker-Breaker percolation game on $(\ZZ^2)_{\alpha,\beta}$ whenever
\begin{equation}
\label{eq:mb-maker-ab}
    (1 - \alpha - \beta) < (m+1)\p[\Big]{\frac{1}{\kappa} - \beta}.
\end{equation}
\end{theorem}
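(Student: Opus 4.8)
The plan is to have Maker play the ``blocker'' role on the planar dual board. Since $\ZZ^2$ is planar and self-dual, Breaker wins the percolation game exactly when the edges he is assigned or claims contain a minimal edge-cut around $v_0$, i.e.\ a dual cycle $C$ enclosing $v_0$ all of whose underlying primal edges lie in the Breaker-set $E_\beta$ or get claimed by Breaker. Hence Maker wins if and only if she can forever keep, on every dual cycle enclosing $v_0$, at least one primal edge not claimed by Breaker; such a cycle becomes permanently harmless (``dead'') the moment Maker owns one of its primal edges. This is a Maker--Breaker game on the dual board in which our Maker is the \emph{blocker}, claiming $m$ edges per round against an opponent claiming $1$, and the winning sets are the dual cycles enclosing $v_0$ --- with the twist that a cycle meeting the Maker-set $E_\alpha$ is already dead, while each of its primal edges lying in $E_\beta$ counts as already claimed by the opponent. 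I would then apply the extension of Beck's criterion to infinite boards from \Cref{sec:potential}: if the initial \emph{danger}
\[
  \Phi \;=\; \sum_{C \,:\, C \cap E_\alpha = \emptyset} (m+1)^{-e(C)},
\]
summed over dual cycles $C$ enclosing $v_0$ and avoiding $E_\alpha$, where $e(C)$ is the number of primal edges of $C$ that are still unclaimed (equivalently, not in $E_\beta$), is below an absolute threshold $c = c(m) > 0$, then the blocker --- Maker --- has a winning strategy.

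The second step is to estimate $\Expec \Phi$ for a fixed $v_0$ under $(\ZZ^2)_{\alpha,\beta}$. For a fixed dual cycle $C$ the $\abs{C}$ underlying primal edges are i.i.d., and for each such edge $e$ the random variable $(m+1)^{-\ind\{e\ \text{unclaimed}\}}\ind\{e \notin E_\alpha\}$ has expectation $\beta + \tfrac{1-\alpha-\beta}{m+1} =: A$; by independence, $\Expec\big[(m+1)^{-e(C)}\ind\{C \cap E_\alpha = \emptyset\}\big] = A^{\abs{C}}$, so $\Expec \Phi = \sum_C A^{\abs{C}}$. The number of dual cycles of length $n$ enclosing a fixed vertex is at most $q(n)\kappa^n$ for some polynomial $q$, hence at most $K(C_0) C_0^n$ for any $C_0 > \kappa$; therefore $\Expec \Phi \le \sum_n K(C_0)(C_0 A)^n$, which is finite as soon as $\kappa A < 1$. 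A short rearrangement shows that $\kappa A < 1$ is precisely the hypothesis \eqref{eq:mb-maker-ab}. In particular, for $L$ large the contribution $\Phi_{>L}$ to $\Phi$ of dual cycles of length exceeding $L$ has expectation below $c/2$, and so $\Phi_{>L} < c/2$ with probability bounded away from $0$ at any fixed vertex.

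Finally I would deal with the finitely many ``short'' dual cycles around $v_0$, those of length at most $L$; this is where the argument of \Cref{thm:potential-maker} requires genuine extra work. Maker chooses $v_0$ so that, besides $\Phi_{>L}$ being small, every primal edge within distance $L$ of $v_0$ is either unclaimed or already hers --- a local event of positive probability whenever $\beta < 1$ --- and then spends a bounded number of opening rounds killing every short dual cycle, at each step responding first to the currently most threatened cycles. Using $m \ge 2$, a short-range potential (or direct pairing-type) argument shows she kills each short cycle before Breaker completes it; after this finite phase the short cycles contribute $0$ to the danger, and Maker continues with the potential strategy, whose danger stays below $c/2 < c$, so she wins. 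The event that \emph{some} vertex is suitable in both senses is translation invariant and contains the intersection of a positive-probability local event with the positive-probability bound on $\Phi_{>L}$, hence has positive probability at the origin; by Kolmogorov's zero--one law it holds almost surely, and Maker picks such a $v_0$. I expect this short-cycle step to be the main obstacle: one must simultaneously exhibit a positive-probability local configuration from which all short cycles can be neutralised and schedule the opening moves so that Breaker never completes a cut in the interim, and it is here --- not in the danger computation --- that the hypothesis $m \ge 2$ is needed.
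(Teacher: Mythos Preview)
Your overall architecture is correct and matches the paper's: Maker plays the blocker on dual cycles, the long cycles are handled by the infinite-board Beck criterion, the expectation computation $\Expec\Phi=\sum_C A^{|C|}$ with $A=\beta+(1-\alpha-\beta)/(m+1)$ is exactly right, and $\kappa A<1$ is indeed \eqref{eq:mb-maker-ab}. The zero--one reduction is also the right move.

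The gap is precisely where you flagged it. Your plan to ``spend a bounded number of opening rounds killing every short dual cycle'' does not work as stated: while Maker is doing this, Breaker is making moves that land on long cycles, and each such move can inflate the long-cycle danger by a factor $(m+1)$. After $K$ opening rounds the bound $\Phi_{>L}<c/2$ becomes $\Phi_{>L}<c(m+1)^K/2$, which is useless. You could try to start from $\Phi_{>L}<c/(m+1)^K$ instead, but $K$ (the number of short cycles) grows with $L$, and you chose $L$ to make $\Phi_{>L}$ small, so the bookkeeping becomes circular. A second, smaller issue: your positive-probability step asserts that the intersection of ``$\Phi_{>L}$ small'' and ``all nearby edges unclaimed or Maker's'' has positive probability, but these events are not determined by disjoint edge sets in your setup (long cycles can pass near $v_0$), so independence is not automatic and neither event is monotone in an obvious way.

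The paper resolves both issues with one device. Rather than a ball around $v_0$, it carves out a thin \emph{ladder} $L_{v_0}^N$ (the $2\times(N{+}1)$ grid) and restricts the potential game to the hypergraph $\cC_{v_0}^N$ of dual cycles that use \emph{no} ladder edge; such cycles must encircle the whole ladder and hence have length $\ge N$, which is what makes the danger a controllable tail. Maker then plays two games \emph{reactively in parallel}: if Breaker's single move lands in the ladder she answers there with an explicit triple-pairing that blocks every cut of the ladder (this is where $m\ge 2$ and $b=1$ are used), and if it lands outside she plays one round of the potential strategy on $\cC_{v_0}^N$. There is no opening phase and no interference between the two games. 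Any dual cycle Breaker might complete either avoids the ladder (blocked by the potential game) or meets it, in which case it contains a cut of the ladder (blocked by the pairing). Moreover, because the ladder and its complement are disjoint edge sets, the events ``ladder has no Breaker edges'' and ``$w_\lambda(\sigma^\rR,\cC_{v_0}^N)<\delta$'' are genuinely independent, which cleanly gives the positive-probability input to the zero--one law.
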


\Cref{thm:potential-breaker-symmetric} and \Cref{thm:potential-maker-symmetric} allow us to obtain non-trivial information on the phase diagrams of $(m,b)$ games on $(\ZZ^2)_{\alpha,\beta}$ for other pairs $(m,b)$.
See \Cref{fig:phase2112} below.

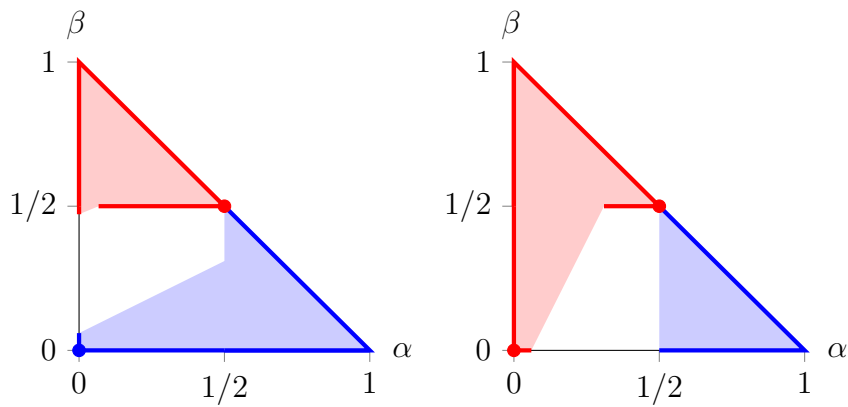
\begin{figure}[ht!]
\centering
\begin{tikzpicture}[thick, scale=1]
\begin{axis}[
    width=10em,height=10em,
    scale only axis, axis equal,
    axis x line* = bottom,
    axis y line* = left,
    axis line style={draw=none},
    ymin=-0.04, ymax=1.04,
    xmin=-0.04, xmax=1.04,
    xtick = {0, 1/2, 1},
    xticklabels = {0, 1/2, 1},
    ytick = {0, 1/2, 1},
    yticklabels = {0, 1/2, 1},
    xlabel=$\alpha$,
    ylabel=$\;\;\beta$,
    every axis x label/.style={at={(current axis.right of origin)},anchor=west},
    every axis y label/.style={at={(current axis.north west)},anchor=south},
]
\path[fill=red!20] (0,0.5) -- (0,0.47215) -- (0.06723,0.5) -- (0.06723,0.6) -- (0,0.6) -- cycle;
\path[fill=blue!20] (0,0.059868) -- (0.5,0.30986) -- (0.6,0.30986) -- (0.6,0) -- (0, 0) -- cycle;
\draw [black] (0,0) -- (0,1);
\draw [ultra thick, blue] (0,0.059868)-- (0,0) -- (1,0);
\draw [ultra thick, red] (0,0.47215) -- (0,1);
\filldraw[ultra thick, draw=red, fill=red!20] (0,0.5) -- (0,1) -- (0.5,0.5) -- (0.06723,0.5);
\filldraw[ultra thick, draw=blue, fill=blue!20] (0.5,0) -- (1,0) -- (0.5,0.5);
\filldraw[blue] (0,0) circle (0.2em);
\filldraw[red] (0.5,0.5) circle (0.2em);
\end{axis}
\end{tikzpicture}
\begin{tikzpicture}[thick, scale=1]
\begin{axis}[
    width=10em,height=10em,
    scale only axis, axis equal,
    axis x line* = bottom,
    axis y line* = left,
    axis line style={draw=none},
    ymin=-0.04, ymax=1.04,
    xmin=-0.04, xmax=1.04,
    xtick = {0, 1/2, 1},
    xticklabels = {0, 1/2, 1},
    ytick = {0, 1/2, 1},
    yticklabels = {0, 1/2, 1},
    xlabel=$\alpha$,
    ylabel=$\;\;\beta$,
    every axis x label/.style={at={(current axis.right of origin)},anchor=west},
    every axis y label/.style={at={(current axis.north west)},anchor=south},
]
\path[fill=red!20] (0,0)--(0.059868,0)--(0.30986,0.5)--(0.30986,0.6)--(0,0.6)--cycle;
\draw [black] (0,0) -- (1,0);
\draw [ultra thick, red] (0.059868,0) -- (0,0) -- (0,1);
\filldraw[ultra thick, draw=red, fill=red!20] (0,0.5) -- (0,1) -- (0.5,0.5) -- (0.30986,0.5);
\filldraw[ultra thick, draw=blue, fill=blue!20] (0.5,0) -- (1,0) -- (0.5,0.5);
\filldraw[red] (0,0) circle (0.2em);
\filldraw[red] (0.5,0.5) circle (0.2em);
\end{axis}
\end{tikzpicture}
\caption{The known regions of the phase diagram for the $(2,1)$ game on the left, and for the $(1,2)$ game on the right, both played on $(\ZZ^2)_{\alpha,\beta}$.}
\label{fig:phase2112}
\end{figure}

Recall that with the bound $\kappa(\ZZ^2) < 3$ we could deduce from \Cref{thm:potential-breaker} that Breaker wins the $(1,2)$ game on $\ZZ^2$ and from \Cref{thm:potential-maker} that Maker wins the $(2,1)$ game on $\ZZ^2$.
Now, equipped with \Cref{thm:potential-breaker-symmetric} and \Cref{thm:potential-maker-symmetric}, and with the bound $\kappa(\ZZ^2) \leq 2.6792$ determined by Pönitz and Tittmann~\cite{Ponitz2000-yg}, we can extend these results and get the regions as in \Cref{fig:phase2112}.
Indeed, with the improved bound on $\kappa(\ZZ^2)$ we know that, almost surely, Breaker wins the $(1,2)$ game on $(\ZZ^2)_{\alpha,0}$ if $\alpha \leq 0.05987$ and Maker wins the $(2,1)$ game on $(\ZZ^2)_{0,\beta}$ if $\beta \leq 0.05987$.
Moreover, the same upper bound on $\kappa(\ZZ^2)$ was used when determining the regions in \Cref{fig:phase11}.

\subsection{Notation}

For a graph $\Lambda$, we denote by $V(\Lambda)$ its vertex set, and by $E(\Lambda)$ its edge set.
The \indef{vertex boundary} $\vboundary H$ of a finite subgraph $H$ of a graph $\Lambda$ is defined as
\begin{equation*}
    \vboundary H \defined \set[\big]{ x \in V(H) \st  \set{x,y} \in E(\Lambda) \text{ for some $y \in V(\Lambda) \setminus V(H)$} }.
\end{equation*}

The lattice $\ZZ^d$ is infinite graph with the following vertex and edge sets:
\begin{align*}
    V(\ZZ^d) &\defined \set[\big]{ (x_1, \dotsc, x_d) \st x_1, \dotsc, x_d \in \ZZ }, \\
    E(\ZZ^d) &\defined \set[\big]{ \set{(x_1, \dotsc, x_d), (y_1, \dotsc, y_d)} \subseteq\ZZ^d \st
    \abs{x_1 - y_1} + \dotsb + \abs{x_d - y_d} = 1}.
\end{align*}
The \indef{box of radius $m$ centred at $x$}, denoted by $B_m(x)$, is the subgraph of $\ZZ^d$ induced by the vertices $(x + [-m,m]^d) \cap \ZZ^d$.
A path $P = v_1 \dotsb v_\ell$ in a graph is a sequence of distinct neighbouring vertices, and we refer to $v_1$ and $v_\ell$ as the \indef{endpoints} of $P$ and to $v_2, \dotsc, v_{\ell - 1}$ as the \indef{internal vertices} of $P$.

Given a plane graph $\Lambda$, its plane dual is the graph $\Lambda'$ with edge set being the set of faces in the plane realisation of $\Lambda$, and two faces are connected by an edge if they share a boundary edge.
There is a canonical map $E(\Lambda) \to E(\Lambda')$ that sends an edge $e \in E(\Lambda)$ to its dual edge $e'$, that connects the two faces that share $e$.

Given an infinite planar connected graph $\Lambda$ and a vertex $v_0 \in V(\Lambda)$, an separating set is a set of edges $S \subseteq E(\Lambda)$ whose removal makes the component of $v_0$ finite.
Note that the dual of a minimal separating set correspond exactly as a dual cycle that encircles $v_0$ in a bounded region.
For that reason, when we consider a game played on $\ZZ^2$, we often consider Breaker as trying to claim a dual cycle around $v_0$, as he can pretend his moves are made in the dual graph, rather than in $\ZZ^2$.

\subsection{Structure}

Some notations and preliminary facts about percolation and random configurations are established in \Cref{sec:percolation}.
In, \Cref{sec:bootstrap} we use facts from bootstrap percolation to deduce \Cref{thm:1dtrivial}.
We introduce the general Maker-Breaker game on a hypergraph in \Cref{sec:hyper}, where we also describe the hypergraphs that we will use.
In \Cref{sec:potential} we outline the potential strategies for Maker and Breaker, providing a detailed proof of \Cref{thm:potential-breaker-symmetric} and \Cref{thm:potential-maker-symmetric}.
In \Cref{sec:analysis}, we provide our analysis of the potential-based strategy in a general Maker-Breaker game.
Finally, we summarise the state of the art of percolation games in \Cref{sec:heuristics}, listing many open problems and conjectures.


\section{Percolation, random boards and initial configurations}
\label{sec:percolation}

In this section we introduce the basic definitions and recall some standard results of bond percolation, which is the natural model for a \indef{random board} for a Maker-Breaker percolation game.
We mostly follow the notation and terminology from Bollobás and Riordan~\cite{Bollobas2006-jp}.

Consider an infinite connected graph $\Lambda$ and $0 \leq p \leq 1$.
In \indef{bond percolation}, each edge of $\Lambda$ is declared \indef{open} with probability $p$, independently from all other edges.
An edge that is not open is said to be \indef{closed}.

To be precise, a \indef{bond configuration} on $\Lambda$ is a function $\omega \from E(\Lambda) \to \set{0,1}$, mapping an edge $e$ into $\omega_e$.
The space of all bond configurations of $\Lambda$ is $\Omega_\Lambda^{\bond} \defined \set{0,1}^{E(\Lambda)}$.
We then say that $e$ is open if $\omega_e = 1$ and closed if $\omega_e = 0$.
Write $\Pbond{\Lambda, p}$ for the product measure on $\Omega_{\Lambda}^{\bond}$ with $\pbond{\Lambda, p}{w_e = 1} = p$ for every $e \in E(\Lambda)$.
We also write $(\Lambda)_p^{\bond}$ for a random configuration with law $\Pbond{\Lambda, p}$.
An \indef{open cluster} is a connected component that consists of open edges.
The central question in bond percolation is the study of the existence of an infinite open clusters under the law of $\Pbond{\Lambda, p}$ as $p$ varies from $0$ to $1$.

When $\Lambda = \ZZ^2$, the bond percolation model undergoes a phase transition at $p = 1/2$.
This was determined by a combination of the remarkable results of Harris~\cite{Harris1960-di}, who showed that $\pbond{\ZZ^2,p}{\text{there is an infinite open cluster}} = 0$ for all $p \leq 1/2$, and Kesten~\cite{Kesten1980-fj}, who showed that $\pbond{\ZZ^2,p}{\text{there is an infinite open cluster}} = 1$ for all $p > 1/2$.
Simpler proofs of these classical results were later found, for instance, by Bollobás and Riordan~\cites{Bollobas2006-rk,Bollobas2007-zp}.

Correlation inequalities are important tools for the study of percolation theory.
First note that we can induce a partial order on the $\Omega_{\Lambda}^{\bond}$ by defining that $\omega \preceq \omega'$ if $\omega_e = 1$ implies $\omega'_e = 1$ for every edge $e \in E(\Lambda)$.
We say that an event $\cE \subseteq \Omega_{\lambda}^{\bond}$ is increasing if $\omega \in \cE$ and $\omega \preceq \omega'$ implies that $\omega' \in \cE$.
We will make use of the correlation inequality of Harris~\cite{Harris1960-di}.

\begin{lemma}[Harris' inequality]
\label{lem:harris}
Let $\cE_1, \dotsc, \cE_n \subseteq \Omega_{\Lambda}^{\bond}$ be increasing events.
Then under the product measure $\Pbond{\Lambda, p}$, we have
\begin{equation*}
    \pbond{\Lambda,p}{\cE_1 \cap \dotsb \cap \cE_k} \geq \pbond{\Lambda,p}{\cE_1} \dotsm \pbond{\Lambda,p}{\cE_k}.
\end{equation*}
\end{lemma}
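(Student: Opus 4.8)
The plan is to derive the lemma from the functional inequality that for all bounded \emph{increasing} functions $f, g \from \Omega_\Lambda^{\bond} \to \RR$ — meaning $\omega \preceq \omega'$ implies $f(\omega) \le f(\omega')$ — one has $\Expec[fg] \ge \Expec[f]\,\Expec[g]$ under $\Pbond{\Lambda,p}$. This is the standard route to Harris' inequality, and once it is available the lemma follows by induction on the number of events: an intersection of increasing events is increasing, so applying the functional inequality with $f = \ind_{\cE_1 \cap \dotsb \cap \cE_{k-1}}$ and $g = \ind_{\cE_k}$ gives $\pbond{\Lambda,p}{\cE_1 \cap \dotsb \cap \cE_k} \ge \pbond{\Lambda,p}{\cE_1 \cap \dotsb \cap \cE_{k-1}}\,\pbond{\Lambda,p}{\cE_k}$, and one iterates.

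I would first prove the functional inequality when $f$ and $g$ depend on only finitely many coordinates $\omega_{e_1}, \dotsc, \omega_{e_n}$, by induction on $n$. For $n = 1$, write $q = 1 - p$ and let $f(i), g(i)$ denote the values taken when $\omega_{e_1} = i$; using $\pbond{\Lambda,p}{\omega_{e_1} = 1} = p$, a direct expansion yields
\begin{equation*}
  \Expec[fg] - \Expec[f]\,\Expec[g] = pq\bigl(f(1) - f(0)\bigr)\bigl(g(1) - g(0)\bigr) \ge 0,
\end{equation*}
the inequality holding because $f$ and $g$ are increasing. For the inductive step, condition on $\omega_{e_n}$: the slices $f_i \defined f(\,\cdot\,, \omega_{e_n} = i)$ and $g_i \defined g(\,\cdot\,, \omega_{e_n} = i)$ are increasing in $\omega_{e_1}, \dotsc, \omega_{e_{n-1}}$, so the inductive hypothesis gives $\Expec[f_i g_i \mid \omega_{e_n} = i] \ge \phi(i)\psi(i)$ where $\phi(i) \defined \Expec[f_i]$ and $\psi(i) \defined \Expec[g_i]$; and since $f_0 \le f_1$ and $g_0 \le g_1$ pointwise, $\phi$ and $\psi$ are increasing in $i$. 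Hence, by the $n = 1$ case applied to $\phi$ and $\psi$ viewed as functions of $\omega_{e_n}$,
\begin{equation*}
  \Expec[fg] = \Expec\bigl[\Expec[f_{\omega_{e_n}} g_{\omega_{e_n}} \mid \omega_{e_n}]\bigr] \ge \Expec[\phi(\omega_{e_n})\,\psi(\omega_{e_n})] \ge \Expec[\phi(\omega_{e_n})]\,\Expec[\psi(\omega_{e_n})] = \Expec[f]\,\Expec[g],
\end{equation*}
the last equality being the tower property.

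To remove the finiteness assumption — necessary since an increasing event can depend on all of $E(\Lambda)$ — fix an exhaustion $E(\Lambda) = \bigcup_{n} S_n$ by finite sets, let $\mathcal{F}_n$ be the $\sigma$-algebra generated by $\set{\omega_e \st e \in S_n}$, and put $f_n \defined \Expec[f \mid \mathcal{F}_n]$, $g_n \defined \Expec[g \mid \mathcal{F}_n]$. Each $f_n$ is an increasing function of the coordinates in $S_n$: if $\sigma \preceq \sigma'$ are configurations on $S_n$ then $(\sigma,\tau) \preceq (\sigma',\tau)$ for every configuration $\tau$ on $E(\Lambda)\setminus S_n$, and averaging the increasing function $f$ over $\tau$ preserves this. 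By the martingale convergence theorem, $f_n \to f$ and $g_n \to g$ almost surely and in $L^1$, so (these functions being bounded) $f_n g_n \to fg$ in $L^1$ as well. Applying the finite-coordinate case to $f_n$ and $g_n$ and letting $n \to \infty$ gives $\Expec[fg] = \lim_n \Expec[f_n g_n] \ge \lim_n \Expec[f_n]\,\Expec[g_n] = \Expec[f]\,\Expec[g]$, establishing the functional inequality in full generality. Since the result is classical I expect no real obstacle; the only genuine computation is the one-variable base case, and the points requiring the most care are the monotonicity of conditional expectations and the interchange of limits in the infinite setting.
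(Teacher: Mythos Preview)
Your proof is correct and follows the standard route to Harris' inequality: establish the functional correlation inequality $\Expec[fg] \ge \Expec[f]\,\Expec[g]$ for bounded increasing $f,g$ by induction on the number of coordinates, extend to the infinite product via martingale convergence, and then deduce the event version by iterating with indicator functions. The base case computation is right, the inductive conditioning is handled properly, and the monotonicity of the conditional expectations $f_n = \Expec[f \mid \mathcal{F}_n]$ is justified correctly using the product structure of the measure.

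There is nothing to compare against: the paper does not prove this lemma. It is quoted as a classical result of Harris and cited to the original 1960 paper, then used as a black box (specifically in the proof of \Cref{prop:minimum-degree-percolation-origin}). So your write-up supplies strictly more than the paper does here.
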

While bond percolation is the random boards for the percolation game, site percolation will be used in our analysis of \Cref{thm:1dtrivial}.

In \indef{site percolation}, each vertex of $\Lambda$ is open with probability $p$ and closed otherwise, independently of all other vertices.
The \indef{site configurations} $\Omega_\Lambda^{\site} = \set{0,1}^{V(\Lambda)}$ receive the product measure $\Psite{\Lambda, p}$ with $\psite{\Lambda,p}{\omega_v = 1} = p$ for every $v \in V(\Lambda)$.
We denote by $(\Lambda)_p^{\site}$ a random configuration with law $\Psite{\Lambda, p}$.

\subsection{Random initial configurations}

It will become convenient in \Cref{sec:hyper} and onwards to adopt the point of view of random initial configurations rather than that of percolation.
A \indef{game configuration} is a function $\sigma \from E(\Lambda) \to \set{\rU, \rM, \rB}$, $e \mapsto \sigma_e$.
Given a configuration $\sigma$, we say that a vertex $v \in V(\Lambda)$ is \indef{of Maker} if $\sigma(v) = \rM$, \indef{of Breaker} if $\sigma(v) = \rB$, and \indef{unclaimed} if $\sigma(v) = \rU$.
Write $\cG_\Lambda \defined \set{\rU, \rM, \rB}^{E(\Lambda)}$ for the space of all game configurations.

For $\alpha, \beta \geq 0$, with $\alpha + \beta \leq 1$, write $\Pbond{\Lambda,\alpha,\beta}$ for the product measure on $\cG_\Lambda$ with $\pbond{\Lambda,\alpha,\beta}{\sigma_e = \rM} = \alpha$, $\pbond{\ZZ^2,\alpha,\beta}{\sigma_e = \rB} = \beta$ and $\pbond{\ZZ^2,\alpha,\beta}{\sigma_e = \rU} = 1 - \alpha - \beta$.
We say that a game configuration $\sigma$ is distributed as $(\Lambda)_{\alpha, \beta}$ if it is drawn from the measure $\Pbond{\ZZ^2,\alpha,\beta}$.
In other words, each edge is assigned to Maker or Breaker probabilities $\alpha$ and $\beta$, respectively, and it is left unclaimed otherwise.

The Maker-Breaker percolation game on a random board $(\Lambda)_p$ is equivalent to the Maker-Breaker percolation game with random initial configuration distributed as $(\Lambda)_{0,1-p}$.
This is so as giving an edge to Breaker and removing it altogether from the board has the same effect for the game.
Considering the game with a random initial configuration $\sigma_0$ distributed as $(\Lambda)_{\alpha,\beta}$ has the advantage of considering a bias towards Maker, which cannot be easily expressed in terms of a random board.


\section{The \texorpdfstring{$(1,d-1)$}{(1,d-1)} game on \texorpdfstring{$(\mathbb{Z}^d)_p$}{Zdp}}
\label{sec:bootstrap}

The goal of this section is to prove \Cref{thm:1dtrivial}.
In other words, we want to show that if $p < 1$, then Breaker almost surely has a winning strategy for the $(1,d-1)$ game on $(\ZZ^d)_p$.
Before diving into the details of the proof, we will sketch the case $d = 2$, namely \Cref{thm:11trivial}.

\subsection{Sketch of the \texorpdfstring{$d = 2$}{d = 2} case}

Our goal as Breaker is to eventually fully claim a self-avoiding dual cycle around the origin.
Alternatively, we could guarantee that Maker will never claim a self-avoiding walk from $v_0$ to the boundary of some large box centred at $v_0$ and force all the edges in this box to be eventually claimed.

Consider $\ZZ^2$ after bond percolation with parameter $p < 1$ was performed.
Each vertex has degree $\leq 1$ with positive probability.
These vertices can be safely removed from the board, as they are not present in any self-avoiding dual cycle around $v_0$.
Instead of actually removing these vertices, we paint them red, marking that they are not useful for Maker.
Paint all other vertices blue, as they could still be useful.

Now, if a blue vertex $v$ has $\leq 1$ blue neighbour, they too cannot be used, even if their actual degree is higher.
Indeed, $v$ can only be an internal vertex of a path that uses a red vertex.
So we can paint these vertices red too.
After repeating this process indefinitely, the blue vertices form the $2$-core\footnote{The $k$-core of a graph is obtained by repeatedly removing all vertices with smallest degree until only vertices with degree $\geq k$ remain. The order of removal does not change the final graph. Note that the $k$-core of a graph may be empty, even if there are vertices of degree larger than $k$ to begin with.} of $(\ZZ^2)_p$.

If $p > 1/2$, the $2$-core of $(\ZZ^2)_p$ is infinite, so there is still some hope for Maker.
But Breaker can play in a way to render additional vertices useless.
Indeed, if a blue vertex $v$ has exactly $2$ blue neighbours, say $x$ and $y$, then Breaker can pair the edges $vx$ and $vy$.
In other words, if Maker claims $vx$, Breaker claims $vy$, and vice versa.
Doing so ensures that $v$ cannot be an internal vertex of a useful path of Maker, that is, a path using only blue vertices.
We can now safely paint this vertex red.
This process can be continued indefinitely before the game even starts and the remaining blue vertices form the $3$-core of $(\ZZ^2)_p$.

To recapitulate, we started by setting each vertex red if it has degree $0$ or $1$ and blue otherwise.
If a blue vertex has $0$ or $1$ blue neighbours, it is useless so we paint it red.
If a blue vertex has $2$ blue neighbours, then Breaker builds the pairing of the two edges and we also paint the vertex red.
That is to say, a blue vertex becomes red if it has $2$ or more red neighbours.
The red vertices follow the $2$-neighbour bootstrap percolation.

The classical result of van Enter guarantees that if each vertex of $\ZZ^2$ is infected with probability $q > 0$ independently, the the $2$-neighbour bootstrap percolation process infects every vertex.
While each vertex is in our original set was present with probability $(1-p)^4 + 4(1-p)^3 > 0$, they are not independent.
Instead, we can start the process by infecting the vertices where the north and east edges are missing, possibly pairing their neighbours.
Then each vertex is initially infected independently with probability $(1-p)^2 > 0$ and van Enter results apply: every vertex eventually becomes red.

The last piece in the proof \Cref{thm:11trivial} is a compactness argument.
To be more precise, we allow Breaker to choose a large enough radius for a box around $v_0$ with the property that every open path from $v_0$ to the boundary passes through both edges of the pairing.
Breaker will always play inside this box, ignoring the paired edges outside.
Clearly, preventing Maker from leaving this box is enough for Breaker to guarantee his victory.
Indeed, this way, the game eventually stops and no path from $v_0$ to the boundary of the box is claimed by Maker.

\subsection{A strategy for Breaker}

We can now proceed to a formal proof of \Cref{thm:1dtrivial}.
Recall that $B_m(x)$ is the box of radius $m$ centred at $x$, namely, the subgraph of $\ZZ^d$ induced by the vertices $(x + [-m,m]^d) \cap \ZZ^d$.
We say that a vertex $x \in B_m(v_0)$ is \indef{internal} if $x \neq v_0$ and $x \notin \vboundary B_m(v_0)$.
Let $\omega$ be a bond configuration on $\ZZ^d$.
We say that a vertex $v \in \ZZ^d$ is \indef{$m$-confined in $\omega$} if there is no open subgraph $H$ of the box $B_m(v)$ such that all of the following conditions hold:
\begin{enumerate}
    \item $v \in V(H)$;
    \item $\card{V(H) \cap \vboundary B_m(v)} \geq 1$;
    \item $\deg_H(x) \geq d + 1$ for any every internal vertex $x \in V(H)$.
\end{enumerate}
We will soon see that if $v$ is $m$-confined in $\omega$, then if we repeatedly removed internal vertices of degree $\leq d$ from $B_m(v)$, then we would disconnect $v$ from $\vboundary B_m(v)$.

We say that a configuration $\omega$ is \indef{confined} if every vertex $x \in \ZZ^d$ is $m$-confined in $\omega$ for some $m = m(x) \geq 2$.
In \Cref{prop:confined}, which we prove later, we show that if $p < 1$ then almost surely $(\ZZ^d)_p$ is confined.
Given that, in order to finish the proof of \Cref{thm:1dtrivial} it suffices to show that Breaker can win the $(1,d-1)$ game on a confined configuration of $\ZZ^d$.

\begin{proposition}
\label{prop:confined-strategy}
Breaker has a winning strategy for a $(1,d-1)$ game on any confined configuration of $\ZZ^d$.
\end{proposition}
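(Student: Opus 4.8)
The plan is to have Breaker process the board $B_m(v_0)$ vertex by vertex, in rounds of a "virtual bootstrap" that runs before the real game, and to set up a pairing strategy that makes Maker unable to cross any confined box. First I would fix $v_0$; since the configuration is confined, there is some finite $m = m(v_0) \geq 2$ such that $v_0$ is $m$-confined in $\omega$. Breaker will play the whole game inside the finite board $B_m(v_0)$ (edges outside are simply ignored; paired edges outside, if any are ever used in the construction, would be irrelevant since Breaker only needs to prevent Maker from reaching $\vboundary B_m(v_0)$). It suffices to show that on this \emph{finite} board Breaker can, after finitely many moves, guarantee that $v_0$ is separated from $\vboundary B_m(v_0)$ by edges \emph{not} claimed by Maker; since the board is finite, the game then terminates with Breaker having won.

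The core construction is a deletion process mirroring $(d+1)$-neighbour considerations, exactly as sketched for $d=2$. Call the closed edges of $\omega$ (those with $\omega_e = 0$) "dead", and set a vertex $x$ to be "killed" if, after we delete all dead edges, $x$ is an internal vertex of $B_m(v_0)$ with at most $d$ surviving incident edges within $B_m(v_0)$ going to not-yet-killed vertices. The key point: whenever an internal vertex $x$ is about to be killed and it currently has exactly $d$ live edges $e_1,\dots,e_d$ to surviving neighbours, Breaker reserves a pairing among a chosen $(d-1)$-subset... here one must be careful, because $d-1$ is even only for odd $d$. The right bookkeeping is the one used in general biased pairing/box-strategy arguments: since Breaker claims $d-1$ edges per turn and an internal vertex $x$ with $\leq d$ live edges is "almost dead", Breaker maintains the invariant that whenever Maker claims one edge at such a vertex, Breaker can claim $d-1$ edges killing the vertex's usefulness, i.e. guaranteeing $x$ cannot be an internal vertex of a Maker-only path to the boundary. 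I would formalize this by induction on the (finite) number of rounds of the virtual bootstrap: a vertex killed in round $t$ has all but at most one of its live edges leading to vertices killed in rounds $< t$, so Breaker, reacting to a single Maker move incident to $x$, has at most $d-1$ "new" edges to neutralize and enough moves to do so. Since $v_0$ is $m$-confined, the virtual process kills every internal vertex of $B_m(v_0)$, hence after the process $v_0$ and $\vboundary B_m(v_0)$ lie in different components of the "surviving" graph; translating back, any self-avoiding Maker path from $v_0$ to the boundary must use an internal vertex $x$ as an internal vertex of the path, contradicting the pairing Breaker set up at $x$.

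The main obstacle I expect is the bookkeeping that makes "$d-1$ moves per Breaker turn suffice" rigorous: one must show that at the moment a vertex becomes relevant, the number of its incident edges that are simultaneously unclaimed, live, and leading to not-yet-neutralized vertices is at most $d-1$ (so Breaker can respond to Maker's single threatening move in one turn), and that these pairings/reservations made at different vertices and different times do not conflict — i.e. Breaker never needs to claim an edge already claimed, and never needs to respond to two Maker moves at once with only $d-1$ edges. Handling this cleanly probably requires running the virtual bootstrap to completion \emph{before} the game starts, recording for each internal vertex $x$ a "response set" $R(x)$ of at most $d-1$ edges incident to $x$ whose purpose is: if Maker ever claims an edge $e \ni x$ with $e \notin R(x)$ and $e$ leads to a not-yet-killed vertex (in the bootstrap order), Breaker claims the remaining unclaimed edges of $R(x)$. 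One then checks, using the defining conditions (i)–(iii) of $m$-confinement together with the order in which vertices are killed, that this is well-defined and that it blocks every internal vertex of every potential Maker crossing path. The remaining steps — confinement holds a.s. (this is \Cref{prop:confined}, assumed) and a Breaker win on the finite board implies a Breaker win in the game — are then routine compactness/finiteness arguments.
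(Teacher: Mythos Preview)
Your high-level plan matches the paper's approach exactly: run a deletion process inside $B_m(v_0)$ that peels off internal vertices of degree $\leq d$ (which terminates disconnecting $v_0$ from the boundary precisely because $v_0$ is $m$-confined), attach to each peeled vertex a reactive response for Breaker, then argue by taking the \emph{first}-peeled vertex on a hypothetical Maker crossing path. However, two of your concrete formulations are wrong and would not yield a proof as stated. First, the sentence ``a vertex killed in round $t$ has all but at most one of its live edges leading to vertices killed in rounds $< t$'' is false: the defining condition is that such a vertex has at most $d$ (not at most one) live edges to \emph{not-yet-killed} vertices. This is exactly why Breaker's $d-1$ moves suffice after Maker's single move. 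Second, your response-set scheme with trigger ``$e \notin R(x)$'' fails: if $x$ has $d$ live edges to later-killed vertices and $R(x)$ consists of $d-1$ of them, Maker can claim two edges \emph{inside} $R(x)$ on separate turns without ever triggering Breaker at $x$ (and these edges trigger nothing at their other endpoints either, since from that side they lead to the already-killed $x$); Maker then uses $x$ as an internal vertex of a crossing path.

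The correct bookkeeping, which also dissolves your worry about conflicting reservations, is the paper's: for each peeled vertex $u_i$, record the \emph{full} set $S_i$ of its (at most $d$) live edges to not-yet-removed vertices, and have Breaker respond to Maker claiming \emph{any} edge of $S_i$ by claiming all the remaining ones (at most $d-1$ edges, so one Breaker turn suffices). The crucial observation you did not isolate is that these sets $S_i$ are automatically pairwise \emph{edge-disjoint}: once $S_i$ is formed, $u_i$ is deleted, so no later $S_j$ contains an edge incident to $u_i$, while every edge of $S_i$ is incident to $u_i$. Disjointness guarantees Breaker is never asked to respond in two clumps at once and never needs an already-claimed edge. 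With this fixed, your minimality argument goes through verbatim and coincides with the paper's proof.
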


Consider a confined configuration $\omega$.
Regardless of Maker's choice of $v_0 \in \ZZ^d$, choose a radius $m$ such that $v_0$ is $m$-confined in $\omega$.

We now construct a family $G$ of subsets of open edges of $B_m(v_0)$.
We refer to the subsets in the family $G$ as \indef{clumps}.
Indeed, we do so by following \Cref{alg:core-construction} listed below, which is essentially the construction of the $(d+1)$-core of the open edges in $B_m(v_0)$.
The clumps are a generalisation of the pairing discussed for $d = 2$.
Now each clump has between $2$ and $d$ edges and they are edge-disjoint.
It may be useful to have in mind that Breaker will later play in a way that if Maker ever claims an edge in a clump, he will claim all other edges.
We keep track of the used vertices $U$ and the order in which they appear.

\begin{algorithm}
\caption{Construction of a family of clumps $G$ in the box $B_m(v_0)$}
\label{alg:core-construction}
\begin{algorithmic}[1]
\State $F \gets \text{subgraph of open edges of $B_m(v_0)$}$
\State $G,U \gets \emptyset$
\State $i \gets 1$
\While{$F$ has a path from $v_0$ to $\vboundary B_m(v_0)$}
    \State Let $u_i$ be an internal vertex minimising $\deg_F(u_i)$
    \If{ $\deg_F(u_i) \geq d + 1$} $ $ terminate
    \ElsIf{ $2 \leq \deg_{F}(u_i) \leq d$}
        \State $S_i \gets \set{u_i y_1, \dotsc, u_i y_\ell}$, where $y_1, \dotsc, y_\ell$ are the neighbours of $u_i$ in $F$
        \State $G \gets G \cup \set{S_i}$
    \EndIf
    \State $U \gets U \cup \set{u_i}$
    \State $F \gets F - \set{u_i}$
    \State $i \gets i + 1$
\EndWhile
\end{algorithmic}
\end{algorithm}

First notice that the algorithm always terminates as $F$ has finitely many vertices and each iteration either removes a vertex from $F$, or terminates altogether.
The algorithm constructs a set $U = \set{u_1, u_2, \dotsc}$ of used vertices and a family $G$ of clumps.
Each clump $S_i$ is of the form $\set{u_i y_1, \dotsc, u_i y_\ell}$.
We say that the clump $S_i$ is \indef{centred} at the vertex $u_i$.
Once the clump $S_i$ is added to $G$, the centre $u_i$ is removed from $F$, so the clumps are all edge-disjoint as promised.
A vertex $u_j \in U$ may or may not be the centre of some clump.

\begin{figure}[ht!]
\centering

\begin{tikzpicture}[scale=1.0]
\tikzstyle{s-helper}=[black, line width=0.1em];
\definecolor{s-col}{RGB}{135, 201, 180};
\tikzstyle{s-line}=[line width=0.5em, s-col, draw opacity=1];
\tikzstyle{s-node}=[fill=white,inner sep=0.4pt, circle, line width=0.6pt,draw,font=\scriptsize];

\def\d{0.3}
\def\s{0.08}

\draw[s-line]
(-3,0+\s)--(-3,-1+\d)
(0+\d,-2)--(1,-2)--(1,-1-\d)
(0+\d,-1)--(1,-1)--(1,0-\d)
(3,0+\d)--(3,1)--(2+\d,1)
(2,1+\d)--(2,2)--(1+\d,2)
(2,0+\d)--(2,1)--(1+\d,1)
(2,-3+\d)--(2,-2)--(2,-1-\d)
(3-\d,-1)--(2,-1)--(2,0-\d)
(1+\d,0)--(2,0)--(3-\d,0)
(3-\s,0)--(4-\d,0)
(0+\d,0)--(1,0)--(1,1-\d)
(0+\d,1)--(1,1)--(1,2-\d)
(0+\d,2)--(1,2)--(1,3-\d)
(1+\d,3)--(2,3)--(2,4-\d)
(0+\d,3)--(1,3)--(1,4-\d)
(-1-\d,3)--(-2,3)--(-2,4-\d)
(0-\d,3)--(-1,3)--(-1,4-\d)
(0,2+\d)--(0,3)--(0,4-\d)
(0,1+\d)--(0,2)--(-1+\d,2)
(0,0+\d)--(0,1)--(-1+\d,1)
(-1,1+\d)--(-1,2)--(-2+\d,2)
(-1,0+\d)--(-1,1)--(-2+\d,1)
(-2,1-\d)--(-2,0)--(-1-\d,0)
(0-\d,0)--(-1,0)--(-1,-1+\d)
(0,0-\d)--(0,-1)--(-1+\d,-1)
(0,-1-\d)--(0,-2)--(-1+\d,-2)
(-1,-1+\s)--(-1,-2+\d)
(1+\d,-3)--(2,-3)--(3-\d,-3)
(0+\d,-3)--(1,-3)--(1,-4+\d)
(-2+\d,-2)--(-1,-2)--(-1,-3+\d)
(-1,-4+\d)--(-1,-3)--(0-\d,-3)
(0,-2-\d)--(0,-3)--(0,-4+\d)
(-3+\d,-1)--(-2,-1)--(-2,-2+\d)
(-3+\d,-2)--(-2,-2)--(-2,-3+\d)
(-3+\d,-3)--(-2,-3)--(-2,-4+\d)
(-4+\d,-1)--(-3,-1)--(-3,-2+\d)
(-4+\d,-2)--(-3,-2)--(-3,-3+\d)
(-4+\d,-3)--(-3,-3)--(-3,-4+\d)
(4-\d,-1)--(3,-1)--(3,-2+\d)
(4-\d,-2)--(3,-2)--(3,-3+\d)
(4-\d,-3)--(3,-3)--(3,-4+\d)
(4-\d,3)--(3,3)--(3,2+\d)
(3-\s,2)--(4-\d,2)
(-3+\d,2)--(-2,2)--(-2,1+\d)
(-2+\s,1)--(-3+\d,1)
(-4+\d,1)--(-3,1)--(-3,2-\d)
(-3,2-\s)--(-3,3-\d)
(-3,3-\s)--(-3,4-\d)
;

\draw[s-helper]
(4,4)--(4,3) (4,1)--(4,0) (4,0)--(4,-1) (4,-2)--(4,-3) (4,-3)--(4,-4)
(3,3)--(3,2) (3,1)--(3,0) (3,-1)--(3,-2) (3,-2)--(3,-3) (3,-3)--(3,-4)
(2,4)--(2,3) (2,2)--(2,1) (2,1)--(2,0) (2,0)--(2,-1) (2,-1)--(2,-2) (2,-2)--(2,-3)
(1,4)--(1,3) (1,3)--(1,2) (1,2)--(1,1) (1,1)--(1,0) (1,0)--(1,-1) (1,-1)--(1,-2) (1,-3)--(1,-4)
(0,4)--(0,3) (0,3)--(0,2) (0,2)--(0,1) (0,1)--(0,0) (0,0)--(0,-1) (0,-1)--(0,-2) (0,-2)--(0,-3) (0,-3)--(0,-4)
(-1,4)--(-1,3) (-1,2)--(-1,1) (-1,1)--(-1,0) (-1,0)--(-1,-1) (-1,-1)--(-1,-2) (-1,-2)--(-1,-3) (-1,-3)--(-1,-4)
(-2,4)--(-2,3) (-2,2)--(-2,1) (-2,1)--(-2,0) (-2,-1)--(-2,-2) (-2,-2)--(-2,-3) (-2,-3)--(-2,-4)
(-3,4)--(-3,3) (-3,3)--(-3,2) (-3,2)--(-3,1) (-3,0)--(-3,-1) (-3,-1)--(-3,-2) (-3,-2)--(-3,-3) (-3,-3)--(-3,-4)

(3,4)--(4,4) (2,4)--(3,4) (1,4)--(2,4) (-2,4)--(-1,4) (-4,4)--(-3,4)
(3,3)--(4,3) (1,3)--(2,3) (0,3)--(1,3) (-1,3)--(0,3) (-2,3)--(-1,3)
(3,2)--(4,2) (1,2)--(2,2) (0,2)--(1,2) (-1,2)--(0,2) (-2,2)--(-1,2) (-3,2)--(-2,2)
(2,1)--(3,1) (1,1)--(2,1) (0,1)--(1,1) (-1,1)--(0,1) (-2,1)--(-1,1) (-3,1)--(-2,1) (-4,1)--(-3,1)
(3,0)--(4,0) (2,0)--(3,0) (1,0)--(2,0) (0,0)--(1,0) (-1,0)--(0,0) (-2,0)--(-1,0)
(3,-1)--(4,-1) (2,-1)--(3,-1) (0,-1)--(1,-1) (-1,-1)--(0,-1) (-3,-1)--(-2,-1) (-4,-1)--(-3,-1)
(3,-2)--(4,-2) (0,-2)--(1,-2) (-1,-2)--(0,-2) (-2,-2)--(-1,-2) (-3,-2)--(-2,-2) (-4,-2)--(-3,-2)
(3,-3)--(4,-3) (2,-3)--(3,-3) (1,-3)--(2,-3) (0,-3)--(1,-3) (-1,-3)--(0,-3) (-3,-3)--(-2,-3) (-4,-3)--(-3,-3)
;

\foreach \x / \y in
{
0/0,-4/4,-4/3,-4/2,-4/1,-4/0,-4/-1,-4/-2,-4/-3,-4/-4,
-3/4,-2/4,-1/4,0/4,1/4,2/4,3/4,4/4,4/3,4/2,4/1,4/0,4/-1,
4/-2,4/-3,4/-4,3/-4,2/-4,1/-4,0/-4,-1/-4,-2/-4,-3/-4
}
\filldraw[xshift=\x cm, yshift=\y cm, fill=black] (0,0) circle (3pt);

\draw
(-3,0) node[s-node] {01}
(1,-2) node[s-node] {02}
(1,-1) node[s-node] {03}
(3,1) node[s-node] {04}
(2,2) node[s-node] {05}
(2,1) node[s-node] {06}
(2,-2) node[s-node] {07}
(2,-1) node[s-node] {08}
(2,0) node[s-node] {09}
(3,0) node[s-node] {10}
(1,0) node[s-node] {11}
(1,1) node[s-node] {12}
(1,2) node[s-node] {13}
(2,3) node[s-node] {14}
(1,3) node[s-node] {15}
(-2,3) node[s-node] {16}
(-1,3) node[s-node] {17}
(0,3) node[s-node] {18}
(0,2) node[s-node] {19}
(0,1) node[s-node] {20}
(-1,2) node[s-node] {21}
(-1,1) node[s-node] {22}
(-2,0) node[s-node] {23}
(-1,0) node[s-node] {24}
(0,-1) node[s-node] {25}
(0,-2) node[s-node] {26}
(-1,-1) node[s-node] {27}
(2,-3) node[s-node] {28}
(1,-3) node[s-node] {29}
(-1,-2) node[s-node] {30}
(-1,-3) node[s-node] {31}
(0,-3) node[s-node] {32}
(-2,-1) node[s-node] {33}
(-2,-2) node[s-node] {34}
(-2,-3) node[s-node] {35}
(-3,-1) node[s-node] {36}
(-3,-2) node[s-node] {37}
(-3,-3) node[s-node] {38}
(3,-1) node[s-node] {39}
(3,-2) node[s-node] {40}
(3,-3) node[s-node] {41}
(3,3) node[s-node] {42}
(3,2) node[s-node] {43}
(-2,2) node[s-node] {44}
(-2,1) node[s-node] {45}
(-3,1) node[s-node] {46}
(-3,2) node[s-node] {47}
(-3,3) node[s-node] {48}
;
\end{tikzpicture}
\caption{Clumps of size $1$ and $2$ produced by \Cref{alg:core-construction} on a configuration with $p = 0.8$. Vertices are numbered as they appear.}
\label{fig:clumps}
\end{figure}
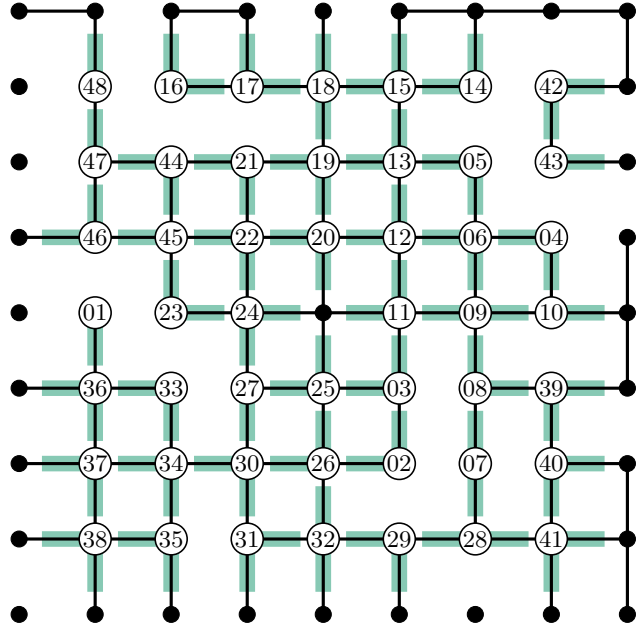

As $v_0$ is $m$-confined, $F$ has no path from $v_0$ to $\vboundary B_m(v_0)$ by the end of the algorithm.
Indeed, let $H$ to be the component of $v_0$ in $F$.
Then $H$ is a connected open subgraph of $B_m(v_0)$ with all internal vertices $x$ satisfying $\deg_H(x) \geq d + 1$, so the condition that $v_0$ is $m$-confined guarantee that $H \cap \vboundary B_m(v_0) = \emptyset$.
Therefore, any open path on $B_m(v_0)$ must use a vertex in $U$.
See \Cref{fig:clumps} for an example of the outcome of \Cref{alg:core-construction}.

We now can formally describe Breaker's strategy for a confined configuration, together with the family of clumps $G$ just constructed above.
On any given turn, if Maker claims the edge $e$, then Breaker claims edges in the following order:
\begin{enumerate}
    \item \label{strat:11breaker-step1}
    If $e \in S$ for some clump $S \in G$, claim all other unclaimed edges in $S$.
    \item \label{strat:11breaker-step2}
    Claim any unclaimed open edges inside $B_m(v_0)$.
    \item \label{strat:11breaker-step3}
    Claim any unclaimed open edges anywhere.
\end{enumerate}
On Breaker's turn, he has $d-1$ edges to claim, and he starts by following rule \ref{strat:11breaker-step1}.
If indeed Maker's edge $e$ belongs to a clump $S$, then it must be the first edge in to be claimed in that clump, and moreover, as $2 \leq \card{S} \leq d$, Breaker indeed can claim all other edges in $S$.
We say now that the clump $S$ has been \indef{used}.
If Breaker can still claim edges, he claims any available edge following \ref{strat:11breaker-step2} and \ref{strat:11breaker-step3}, giving priority to edges inside $B_m(v_0)$.
On finitely many rounds, all the open edges in $B_m(v_0)$ are claimed, as Breaker only claims such edges when they exists.

\begin{proof}[Proof of \Cref{prop:confined-strategy}]
We show that if Breaker follows the strategy just described, then it wins on a confined configuration.
Indeed, by the time that all open edges in $B_m(v_0)$ are claimed, Breaker has already won.
It suffices then to show that there are no self-avoiding walk $P$ from $v_0$ to $\vboundary B_m(v_0)$ consisting only of edges claimed by Maker.
Suppose such path $P$ existed, we may assume it is fully contained in $B_m(v_0)$.
As previously observed, any such path must use a vertex in $U = {u_1, u_2, \dotsc}$.
Let $u_k$ be the vertex of smallest index with $u_k \in P$.
As $U$ consists only of internal vertices, $u_k$ is an internal vertex of $P$.
Let $x$ and $y$ be the two neighbours of $u_k$ in $P$.
From \Cref{alg:core-construction}, $u_k$ can be one of two kinds: it is either a centre of a clump or it is not the centre of a clump.

If $u_k$ is not the centre of a clump, then $u_k$ had degree $1$ in the $k$-th step of the algorithm, which means that it has only one neighbour in $B_m(v_0) \setminus \set{u_1, \dotsc, u_k}$.
This contradicts the minimality of $k$, as it implies either $x$ or $y$ is in $\set{u_1, \dotsc, u_k}$.

If $u_k$ is the centre of a clump $S_k$, then at the $k$-th step of the algorithm $u_k$ had from $2$ to $d$ neighbours in $B_m(v_0) \setminus \set{u_1, \dotsc, u_k}$.
Since all other vertices of $P$, and in particular $x$ and $y$, are not in $\set{u_1, \dotsc, u_k}$, we have that the edges $u_k x$ and $u_k y$ are in $S_k$.
This is a contradiction as Breaker claimed at least one of these two edges.
\end{proof}

\subsection{Bootstrap percolation}

The \indef{$r$-neighbour bootstrap percolation} of a graph $G$ is the following deterministic process.
An initial set of vertices $I \subseteq V(G)$ is deemed \indef{infected}.
Writing $I_t$ for the set of infected vertices at time $t$, we set $I_0 \defined I$ and follow the dynamics
\begin{equation*}
  I_{t+1} \defined I_t \cup \set[\big]{v \in V(G) \st \card{N(v) \cap I_t} \geq r}.
\end{equation*}
In other words, a vertex becomes infected as soon as it has at least $r$ infected neighbours.
Also note that a vertex cannot become disinfected, so the sets $I_t$ are nested.
The \indef{closure} of $I \subseteq V(G)$ is the set of eventually infected vertices under $r$-neighbour bootstrap percolation, that is, $[I]_r \defined \bigcup_{t \geq 0} I_t$.
We say that the set $I$ \indef{percolates} if $[I]_r = V(G)$.
While deterministic initial conditions are also of interest, we are concerned with the case that $I$ is chosen randomly.
Specifically for $\ZZ^d$, when the initially infected set is the result of a site percolation process with $p > 0$, percolation always occurs under the $d$-neighbour bootstrap.
Indeed, this was shown by van Enter~\cite{Van_Enter1987-ee} for $d = 2$ and by Schonmann~\cite{Schonmann1992-lj} for general $d$.

\begin{theorem}[van Enter and Schonmann]
\label{thm:schonmann-van-enter}
If $0 < p \leq 1$ and $0 \leq r \leq d$ then under the $r$-neighbour bootstrap percolation, the set $(\ZZ^d)_p^{\site}$ percolates almost surely.
\end{theorem}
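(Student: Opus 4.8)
The plan is to reduce to the threshold case $r=d$ and then run van Enter's ``growing box'' argument \cite{Van_Enter1987-ee}, using an induction on the dimension to handle the lower‑dimensional layers that appear each time an infected box is enlarged. \textbf{Reductions.} If $0\le r\le d$ then a vertex with at least $d$ infected neighbours certainly has at least $r$ of them, so $[I]_r\supseteq[I]_d$ for every initial set $I$, and for $r=0$ everything is infected at time $0$; hence it suffices to treat $1\le r=d$. Next, the event $E=\set{[(\ZZ^d)_p^{\site}]_d=\ZZ^d}$ is invariant under the translations of $\ZZ^d$, and the law $\Psite{\ZZ^d,p}$ is a product measure, hence mixing and in particular ergodic for this action; so $\prob{E}\in\set{0,1}$ and it is enough to prove $\prob{E}>0$.

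The next step is a \textbf{growth lemma}. Call a finite box $R=\prod_{i=1}^d[a_i,b_i]\cap\ZZ^d$ \emph{internally spanned} if the $d$‑neighbour bootstrap process run inside the induced subgraph $R$, started from $I\cap R$, infects all of $R$ (so boundary vertices of $R$, which have reduced degree, need a correspondingly larger proportion of infected neighbours). The key point is: if $R$ is already entirely infected and $S=\set{b_j+1}\times\prod_{i\ne j}[a_i,b_i]$ is the unit slab just outside the $+e_j$‑face of $R$, then every vertex of $S$ already has its $(-e_j)$‑neighbour infected, so inside $S$ — which is a $(d-1)$‑dimensional box — infection spreads at least as fast as $(d-1)$‑neighbour bootstrap started from $I\cap S$. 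Consequently, if $S$ is internally spanned under $(d-1)$‑neighbour bootstrap, then $R\cup S$ becomes entirely infected, and likewise in the $-e_j$ direction. Iterating: if some box is entirely infected and, along a fixed nested sequence of boxes exhausting $\ZZ^d$, each of the finitely many unit slabs adjoined at every step is internally spanned under $(d-1)$‑neighbour bootstrap by its own part of $I$, then $[I]_d=\ZZ^d$.

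I would then set up the \textbf{induction on $d$}. Let $f_d(L)=\prob{[1,L]^d\cap\ZZ^d \text{ is internally spanned}}$; the content of the induction is that $f_d(L)\to 1$ as $L\to\infty$ for every fixed $p>0$. The base case $d=1$ is immediate, since $[1,L]$ is internally spanned under $1$‑neighbour bootstrap as soon as it contains one infected site, so $f_1(L)=1-(1-p)^L$. For the inductive step, assuming $f_{d-1}(L)\to 1$, one builds $[1,L]^d$ from a single infected site by repeatedly adjoining unit slabs in the $2d$ coordinate directions; by the growth lemma each newly adjoined slab (a $(d-1)$‑box of side $\Theta(\ell)$ at stage $\ell$) need only be internally spanned under $(d-1)$‑neighbour bootstrap by its own \emph{disjoint} portion of $I$, which by hypothesis happens with probability $f_{d-1}(\Theta(\ell))$, and the resulting infinite product of success probabilities is positive because $f_{d-1}\to 1$; this already yields $f_d(L)\ge c_d(p)>0$ uniformly in $L$. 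To upgrade ``positive'' to ``$\to 1$'' one runs the standard Aizenman--Lebowitz‑type renormalisation: inside $[1,L]^d$ there are $\Theta((L/\ell)^d)$ disjoint sub‑boxes of side $\ell$, if one of them is internally spanned then with probability tending to $1$ (choosing $\ell=\ell(L)$ growing slowly, e.g.\ logarithmically) the extra infected sites needed to grow it out to fill $[1,L]^d$ are present, whence $f_d(L)\to 1$.

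Finally I would \textbf{assemble the pieces}. Fix a large $n_0$; let $B$ be the cylinder event that $B_{n_0}(0)$ is internally spanned, so $\prob{B}=f_d(2n_0+1)>0$, and let $C$ be the event that for every $n\ge n_0$ each of the unit slabs adjoined when enlarging $B_n(0)$ to $B_{n+1}(0)$ is internally spanned under $(d-1)$‑neighbour bootstrap by its own part of $I$. All of these slabs, together with $B_{n_0}(0)$, are pairwise disjoint (the slabs lie in distinct annuli $B_{n+1}(0)\setminus B_n(0)$), so $B$ and $C$ are independent and $\prob{C}\ge\prod_{n\ge n_0}f_{d-1}(\Theta(n))^{2d}>0$, the positivity being exactly where $f_{d-1}(L)\to 1$ is used. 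On $B\cap C$ the growth lemma gives $[I]_d=\ZZ^d$, so $\prob{E}\ge\prob{B}\prob{C}>0$, and ergodicity upgrades this to $\prob{E}=1$; monotonicity then gives the claim for all $r\le d$. The main obstacle is the \emph{fragility} of growth: an infected box can only be enlarged reliably once it is already wide enough that the next slab is likely to fill in, which forces the argument to be arranged so that the adjoined slabs grow in size along the exhausting sequence; quantitatively, this is precisely what makes the estimate $f_{d-1}(L)\to 1$ with a summable rate both necessary and the real work, and is the heart of the theorems of van Enter \cite{Van_Enter1987-ee} and Schonmann \cite{Schonmann1992-lj}.
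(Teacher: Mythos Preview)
The paper does not prove this theorem; it is quoted as a known result of van Enter~\cite{Van_Enter1987-ee} (case $d=2$) and Schonmann~\cite{Schonmann1992-lj} (general $d$) and then used as a black box. Your sketch is indeed the standard growing-box argument underlying those papers, so in that sense you are reproducing the right proof.

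That said, your induction does not close as written. You state the inductive hypothesis as $f_{d-1}(L)\to 1$ and then claim that ``the resulting infinite product of success probabilities is positive because $f_{d-1}\to 1$'', and again that $\prob{C}\ge\prod_{n\ge n_0}f_{d-1}(\Theta(n))^{2d}>0$ follows from $f_{d-1}\to 1$. This is false in general: positivity of $\prod_\ell f_{d-1}(\Theta(\ell))$ requires $\sum_\ell\bigl(1-f_{d-1}(\Theta(\ell))\bigr)<\infty$, not merely convergence to $1$. You seem aware of this in your closing remark about a ``summable rate'', but then the inductive hypothesis must be strengthened to carry that rate, and the renormalisation step must be shown to \emph{preserve} summability (in fact one gets stretched-exponential decay of $1-f_d(L)$, which is what Schonmann proves). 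As written, the base case $1-f_1(L)=(1-p)^L$ is summable, but nothing in your inductive step shows that summability propagates to $1-f_2(L)$, so the argument stalls at $d=3$.

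A smaller imprecision: the $2d$ slabs needed to pass from $B_n(0)$ to $B_{n+1}(0)$ do \emph{not} lie in distinct annuli and are not automatically disjoint (they overlap at corners). The fix is to adjoin them sequentially, so that each new slab is a face of the \emph{current} rectangular box; the resulting slabs are then disjoint rectangular $(d-1)$-boxes with all side-lengths in $[2n+1,2n+3]$, and one applies the inductive estimate to those (this also means $f_{d-1}$ must be controlled for rectangular boxes, or one reduces to the cubic case by a further growth argument).
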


Using this result, we can now show that the $(d+1)$-core of the open edges in bond percolation in $\ZZ^d$ is empty.

\begin{proposition}
\label{prop:minimum-degree-percolation}
For any $0 \leq p < 1$, almost surely $(\ZZ^d)_p$ has no open subgraph of minimum degree $\geq d + 1$.
\end{proposition}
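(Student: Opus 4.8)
\medskip

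The strategy is to reduce the statement about bond percolation on $\ZZ^d$ to a statement about $d$-neighbour bootstrap percolation, so that \Cref{thm:schonmann-van-enter} applies. The key observation is that a vertex of $(\ZZ^d)_p$ that lies in \emph{no} open subgraph of minimum degree $\geq d+1$ should be thought of as ``eventually infected'', and we want to show that almost surely every vertex has this property.

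\medskip

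First I would set up the infection process on the vertex set of $\ZZ^d$. Given a bond configuration $\omega$, declare a vertex $v$ \emph{initially infected} if at least one of the $2d$ edges at $v$ is closed — equivalently, if $\deg_\omega(v) \leq 2d - 1$. Since each edge is closed independently with probability $1 - p > 0$, and distinct vertices share edges, these events are \emph{not} independent; but one can pass to an independent sub-event, for instance by declaring $v$ initially infected whenever the two edges in a fixed pair of coordinate directions (say the ``positive $x_1$'' edge and the ``positive $x_2$'' edge) at $v$ are both closed. This happens with probability $(1-p)^2 > 0$ independently over $v$, so the set of initially infected vertices stochastically dominates a site percolation configuration $(\ZZ^d)_q^{\site}$ with $q = (1-p)^2 > 0$. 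Now run $d$-neighbour bootstrap percolation from this initial set. By \Cref{thm:schonmann-van-enter}, almost surely every vertex of $\ZZ^d$ is eventually infected.

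\medskip

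The heart of the argument is then a deterministic implication: if $H$ is an open subgraph of $(\ZZ^d)_p$ with $\delta(H) \geq d+1$, then \emph{no} vertex of $H$ can ever become infected under the bootstrap process. Suppose otherwise, and let $v \in V(H)$ be the first vertex of $H$ to become infected, say at time $t+1$. Since $\deg_\omega(v) \geq \deg_H(v) \geq d+1 > 2d-1$ is impossible for $d \geq 2$... wait — more carefully: $v$ cannot be initially infected, because an initially infected vertex has at most $2d - 1$ open edges while $\deg_H(v) \geq d+1$ forces at least $d+1$ open edges at $v$; this rules out initial infection only when $d+1 > 2d-1$, i.e.\ $d < 2$, so this simple count is not enough. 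Instead one must use the right initial condition: with the pair-of-directions definition above, $v$ being initially infected means the positive-$x_1$ and positive-$x_2$ edges at $v$ are closed, hence absent from $H$, so $\deg_H(v) \leq 2d - 2 < d+1$ when $d = 2$ — again borderline. The clean fix is to instead take $r = d$ bootstrap and initial set ``$v$ has a closed edge'' but argue via \emph{minimality in time} together with \emph{minimum degree}: at time $t+1$, vertex $v$ has $\geq d$ infected neighbours; but $v$ has $\geq d+1$ neighbours \emph{inside} $H$ (counting multiplicity one, these are $\geq d+1$ distinct vertices of $H$), and at time $t$ none of them is infected by minimality, so at most $2d - (d+1) = d - 1$ neighbours of $v$ lie outside $H$ and could be infected — fewer than $d$, a contradiction. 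This also handles initial infection if we fold ``initially infected'' into ``infected at time $0$'' and note an $H$-vertex has $\leq d-1$ non-$H$-neighbours total, hence cannot have a closed edge forcing it in... Actually the honest statement: every vertex $v \in V(H)$ has at least $d+1$ neighbours in $H$, hence at most $d - 1$ neighbours outside $H$; since a bootstrap infection of $v$ requires $d$ infected neighbours and (by minimality) all infected neighbours of $v$ at the relevant time lie outside $H$, we get $d \leq d-1$, absurd. The only remaining case is that $v$ is initially infected, but with the initial set ``at least one edge at $v$ is closed'' an $H$-vertex can have a closed edge; this is why we use the \emph{sub-event} ``two specified edges closed'' — then an initially infected $H$-vertex would have $\deg_H(v) \leq 2d - 2$, which contradicts $\deg_H(v) \geq d+1$ precisely when $2d - 2 < d+1$, i.e.\ $d \leq 2$. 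For $d \geq 3$ we instead simply observe that $H$ having $\delta(H) \geq d+1$ means every $H$-vertex has $\leq d-1$ closed incident edges, so no $H$-vertex is caught by an initial set requiring $\geq d$ closed edges at a vertex — but our initial set only requires one closed edge, so this doesn't work either. The genuinely clean route, which I would adopt, is: use the standard fact that a vertex set with the property ``every vertex has $\geq d$ neighbours outside it'' is disjoint from $[I]_d$ whenever it is disjoint from $I$ — but here $V(H)$ has every vertex with $\leq d-1$ neighbours outside $V(H)$, which is the wrong direction. So the correct framing is the \emph{complementary} one: the set $V(\ZZ^d) \setminus V(H)$ is closed under the process in reverse... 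I will instead phrase it as the author surely does — via minimality of infection time as sketched two sentences ago, which is airtight once one checks the initial-infection base case by choosing the initial set so that an initially infected vertex has at most $d-1$ incident closed edges is \emph{not} what we want; rather, we want: if $v\in V(H)$ then $v$ has $\ge d+1$ incident open edges, and the bootstrap rule with $r=d$ never infects a vertex all of whose already-infected neighbours number fewer than $d$, combined with: the set $I_0$ can be taken to consist of vertices with an incident closed edge, and any $v \in V(H)$ with an incident closed edge still has $\ge d+1$ incident open edges, hence $\ge d+1$ neighbours in $H$; so whether $v$ enters at time $0$ or later, at the moment it enters it must have $\ge d$ infected neighbours, all necessarily outside $H$ by minimality (for time $0$, ``outside $H$'' is automatic since no neighbour is infected at time $-1$, but $v$ at time $0$ is infected by the \emph{initial condition}, not by the rule) — hence for time $0$ we need the separate observation that $I_0$ is \emph{disjoint from} $V(H)$, which holds iff every $H$-vertex has all edges open, which is false in general.

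\medskip

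Given the tangle above, the main obstacle is clearly the \textbf{choice of initial infected set}: it must (i) be dominated from below by an i.i.d.\ site process with positive density, so that \Cref{thm:schonmann-van-enter} applies, and (ii) be disjoint from $V(H)$ for every open subgraph $H$ with $\delta(H) \geq d+1$. Requirement (ii) forces $I_0 \cap V(H) = \emptyset$, i.e.\ every vertex of $H$ must fail the initial-infection test; so the test should be ``$\deg_\omega(v) \leq d$'' (then $\delta(H)\geq d+1$ indeed puts $V(H)$ outside $I_0$). But ``$\deg_\omega(v) \leq d$'' requires $\geq d$ closed edges at $v$, an event of probability $\binom{2d}{d}(1-p)^d p^d + \cdots$, which is positive but whose occurrences at neighbouring vertices are positively correlated, not independent. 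So after fixing $I_0 = \{v : \deg_\omega(v) \leq d\}$, the remaining work is a \emph{domination/renormalisation step}: show that although $\{v \in I_0\}$ are dependent, the family still dominates an i.i.d.\ positive-density site process on a sublattice (e.g.\ by the Liggett–Schonmann–Stacey lemma, or by the elementary trick of revealing edges in disjoint direction-classes so that membership of $v$ in a thinned version of $I_0$ depends only on a fixed private set of edges). Once $I_0$ is both (ii)-good by construction and (i)-good by this domination, the deterministic implication ``$V(H) \cap [I_0]_d = \emptyset$'' is immediate by induction on the infection time using minimum degree $\geq d+1$ (a first-infected vertex of $H$ would need $\geq d$ infected neighbours among its $\leq d-1$ neighbours outside $H$), \Cref{thm:schonmann-van-enter} gives $[I_0]_d = V(\ZZ^d)$ almost surely, and we conclude $V(H) = \emptyset$, i.e.\ no such $H$ exists. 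I would write the domination step (i) as the one technical lemma and keep (ii) and the bootstrap-monotonicity argument as short deterministic observations.
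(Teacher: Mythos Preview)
Your approach is correct and is essentially the paper's, but you have buried the one clean idea under several false starts. The paper takes $I_0 \defined \set{v : \text{all $d$ edges $\{v,v+e_i\}$, $i=1,\dotsc,d$, are closed}}$, using \emph{all} $d$ positive coordinate directions rather than just two. The edge sets $S_v = \set{\{v,v+e_i\} : 1 \le i \le d}$ are pairwise disjoint as $v$ ranges over $\ZZ^d$, so the events $\set{v \in I_0}$ are genuinely independent with probability $(1-p)^d > 0$, and \Cref{thm:schonmann-van-enter} applies directly with no domination lemma, no Liggett--Schonmann--Stacey, no renormalisation. At the same time $v \in I_0$ forces $\deg_\omega(v) \le 2d - d = d$, so $I_0 \cap V(H) = \emptyset$ holds automatically whenever $\delta(H) \ge d+1$. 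Your induction on infection time (``a first-infected vertex of $H$ would need $\ge d$ infected neighbours among its $\le d-1$ neighbours outside $H$'') then finishes in two lines, exactly as in the paper.

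Your two-edge attempt fails for $d \ge 3$ precisely because it only forces $\deg_\omega(v) \le 2d-2$; taking $d$ edges fixes this. Your fallback to $I_0 = \set{v : \deg_\omega(v) \le d}$ plus a domination step is unnecessary overhead, and the LSS route in particular is dubious as stated: LSS produces a positive-density product lower bound only when the marginal is close enough to $1$, not for arbitrary positive marginals. You do mention the right fix (``revealing edges in disjoint direction-classes so that membership \ldots\ depends only on a fixed private set of edges''), but only as one option among several; that \emph{is} the proof, and the private set is $S_v$.
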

\begin{proof}
Given a vertex $x \in \ZZ^d$ denote by $S_x$ the set of edges of the form $\set{x, x + e_i}$, where $e_1, \dotsc, e_d$ is the standard basis of $\RR^d$.
Consider the set
\begin{equation*}
    I_0 \defined \set[\big]{x \in \ZZ^d \st \text{all edges in $S_x$ are closed}}.
\end{equation*}
Note that each vertex $x \in \ZZ^d$ is in $I_0$ independently with probability $q = (1 - p)^d > 0$.
In other words, $I_0$ has the law of $\Psite{\ZZ^d,q}$.
\Cref{thm:schonmann-van-enter} gives that the $d$-neighbour bootstrap percolation on $\ZZ^d$ with initially infected vertices $I_0$ does percolate almost surely.

We now show that $[I_0]_d = \ZZ^d$ implies that there are no open subgraphs of $\ZZ^d$ with minimum degree $\geq d + 1$.
Indeed, assume we had such a subgraph $H$.
No vertex in $I_0$ is in $H$, as by definition, vertices in $I_0$ have at most $d$ open neighbours.
Now we observe that no vertex of $H$ can ever get infected in further rounds.
Indeed, say that no vertex of $H$ has been infected after $t-1$ rounds.
If $x \in V(H)$ gets infected in the $t$-th round, then $x$ has at least $d$ neighbours in $I_0 \cup I_1 \cup \dotsb \cup I_{t-1}$, therefore it has at most $d$ neighbours in $H$.
\end{proof}

In another words, \Cref{prop:minimum-degree-percolation} says that
for every $p < 1$,
\begin{equation*}
    \pbond[\big]{\ZZ^d,p}{\text{$(d+1)$-core of $(\ZZ^d)_p$ is empty}} = 1,
\end{equation*}
Using Harris' inequality, we strengthen \Cref{prop:minimum-degree-percolation} as follows.

\begin{proposition}
\label{prop:minimum-degree-percolation-origin}
For any $0 \leq p < 1$, almost surely $(\ZZ^d)_p$ has no open subgraph $H$ with $\card{V(H)} \geq 2$ and $\deg_H(x) \leq d$ for at most one $x \in V(H)$.
\end{proposition}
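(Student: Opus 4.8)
The plan is to deduce this from \Cref{prop:minimum-degree-percolation} together with Harris' inequality (\Cref{lem:harris}). Morally, an open subgraph having a single vertex of degree $\leq d$ is \enquote{almost} a subgraph of minimum degree $\geq d+1$, and two such near-exceptional subgraphs whose exceptional vertices sit at different locations can be superimposed to produce a genuine open subgraph of minimum degree $\geq d+1$ on at least two vertices --- which \Cref{prop:minimum-degree-percolation} forbids.

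First I would localise. For $w \in \ZZ^d$ let $\mathcal{B}_w$ be the event that there is an open subgraph $H$ with $w \in V(H)$, $\card{V(H)} \geq 2$, and $\deg_H(x) \geq d+1$ for every $x \in V(H) \setminus \{w\}$. If the bad event of the proposition occurs with witness $H$, then taking $w$ to be the unique vertex of $V(H)$ of degree $\leq d$ (or any vertex of $V(H)$ if there is none), the event $\mathcal{B}_w$ occurs for some $w \in V(H)$. Hence it suffices to show $\prob{\mathcal{B}_w} = 0$ for every $w$, and by translation invariance of $\Pbond{\ZZ^d,p}$ it is enough to treat $w = 0$.

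Next I would refine $\mathcal{B}_0$ by recording a second vertex of the witness. For $v \neq 0$ let $\mathcal{B}_0^v$ be the event that some open subgraph $H$ witnesses $\mathcal{B}_0$ and moreover has $v \in V(H)$; since $\card{V(H)} \geq 2$ always holds, $\mathcal{B}_0 = \bigcup_{v \neq 0} \mathcal{B}_0^v$, a countable union. Each $\mathcal{B}_0^v$ is an increasing event, because enlarging the set of open edges leaves any witnessing $H$ intact. The heart of the matter is that $\mathcal{B}_0^v \cap \mathcal{B}_v^0$ has probability $0$: given a witness $H$ for $\mathcal{B}_0^v$ and a witness $H'$ for $\mathcal{B}_v^0$, in the open subgraph $H \cup H'$ the vertex $0$ has degree $\geq d+1$ (coming from $H'$, as $0 \in V(H') \setminus \{v\}$), the vertex $v$ has degree $\geq d+1$ (coming from $H$, as $v \in V(H) \setminus \{0\}$), and every other vertex of $V(H) \cup V(H')$ has degree $\geq d+1$ (from whichever of $H$, $H'$ contains it); as $\card{V(H \cup H')} \geq 2$, this contradicts \Cref{prop:minimum-degree-percolation}. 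Applying Harris' inequality to the two increasing events gives $\prob{\mathcal{B}_0^v}\,\prob{\mathcal{B}_v^0} \leq \prob{\mathcal{B}_0^v \cap \mathcal{B}_v^0} = 0$. Finally, the lattice automorphism $x \mapsto v - x$ preserves $\Pbond{\ZZ^d,p}$ and carries $\mathcal{B}_0^v$ onto $\mathcal{B}_v^0$, so $\prob{\mathcal{B}_0^v} = \prob{\mathcal{B}_v^0}$ and hence $\prob{\mathcal{B}_0^v} = 0$. Summing over $v \neq 0$ gives $\prob{\mathcal{B}_0} = 0$, and the reduction above finishes the proof.

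The one step needing genuine care is the superposition $H \cup H'$: one must check that the two distinguished vertices $0$ and $v$ really do acquire degree $\geq d+1$ there, which is exactly why the refined events $\mathcal{B}_0^v$ --- remembering that $v$ lies in the first witness and $0$ in the second --- are needed in place of the plain events $\mathcal{B}_0$ and $\mathcal{B}_v$. Everything else (monotonicity of the events, the decomposition $\mathcal{B}_0 = \bigcup_v \mathcal{B}_0^v$, and the effect of the reflection) is routine bookkeeping.
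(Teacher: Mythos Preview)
Your proof is correct and takes a genuinely different route from the paper's. Both arguments localise to $\mathcal{B}_0$ (the paper's $E_0$), reduce by translation invariance, and combine Harris' inequality with \Cref{prop:minimum-degree-percolation}; they differ in how the forbidden subgraph of minimum degree $\geq d+1$ is produced. The paper intersects $E_0$ with the events $F_i$ that each of the $2d$ edges incident to the origin is open, and asserts that on $E_0 \cap F_1 \cap \dotsb \cap F_{2d}$ such a subgraph exists. You instead refine to events $\mathcal{B}_0^v$ recording a second vertex $v$ of the witness, pair $\mathcal{B}_0^v$ with its image $\mathcal{B}_v^0$ under the lattice automorphism $x \mapsto v - x$, and observe that on $\mathcal{B}_0^v \cap \mathcal{B}_v^0$ the union $H \cup H'$ of the two witnesses has every vertex of degree $\geq d+1$, since each exceptional vertex is repaired by the other witness; Harris then gives $\prob{\mathcal{B}_0^v}^2 \leq \prob{\mathcal{B}_0^v \cap \mathcal{B}_v^0} = 0$. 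Your superposition step is entirely transparent, whereas the paper's assertion is more delicate than it looks: adjoining all $2d$ edges at $0$ to the witness $H$ may create new degree-$1$ vertices (the neighbours of $0$ not already in $V(H)$), and adjoining only the edges to neighbours already in $V(H)$ gives $\deg(0) = \card{N(0) \cap V(H)}$, which need not be $\geq d+1$. Your argument sidesteps this issue completely.
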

\begin{proof}
If $p = 0$, there is nothing to prove, so assume $0 < p < 1$.
Let $E_v$ be the event that $(\ZZ^d)_p$ has an open subgraph $H$ with $v \in V(H)$, $\card{V(H)} \geq 2$ and $\deg_H(x) \geq d+1$ for all $x \in V(H)$, $x \neq v$.
Our goal is to show that $\pbond{p}{E} = 0$, where $E = \bigcup_{v \in \ZZ^d}E_v$ is the event in the statement.
It is enough to show that $\pbond{p}{E_0} = 0$, as $\pbond{p}{E_v} = \pbond{p}{E_0}$ for all $v \in \ZZ^d$ by translational invariance.
Assume for a contradiction that $\pbond{p}{E_0} = q > 0$.

Let $e_1, \dotsc, e_{2d}$ be the edges incident to $0$ in $\ZZ^d$ and let $F_i$ be the event that edge $e_i$ is open.
Note that the events $E_0$ and $F_i$ are all increasing.
From Harris's inequality,
\begin{equation*}
    \pbond{p}{E_0 \cap F_1 \cap \dotsb \cap F_{2d}} \geq \pbond{p}{E_0} \pbond{p}{F_1} \dotsb \pbond{p}{F_{2d}} = qp^{2d} > 0.
\end{equation*}
But this contradicts \Cref{prop:minimum-degree-percolation}, as $E_0 \cap F_1 \cap \dotsb \cap F_{2d}$ implies that there is an open subgraph of minimum degree $\geq d + 1$.
\end{proof}

The last step in the proof of \Cref{thm:1dtrivial} is to show that $(\ZZ^d)_p$ is confined almost surely.
This follows from \Cref{prop:minimum-degree-percolation-origin} and a compactness argument.

\begin{proposition}
\label{prop:confined}
For any $0 \leq p < 1$, $(\ZZ^d)_p$ is confined almost surely.
\end{proposition}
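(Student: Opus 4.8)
The plan is to deduce this from \Cref{prop:minimum-degree-percolation-origin} via a Borel--Cantelli / compactness argument. Fix $p < 1$. For a vertex $v$, being $m$-confined in $\omega$ is the statement that there is no open subgraph $H$ of $B_m(v)$ with $v \in V(H)$, touching $\vboundary B_m(v)$, and with $\deg_H(x) \ge d+1$ at every internal $x$. First I would record the monotonicity: if $v$ is $m$-confined in $\omega$ and $m' \ge m$, then $v$ is also $m'$-confined. Indeed, if some open $H \subseteq B_{m'}(v)$ witnessed failure of $m'$-confinement, then the component $K$ of $v$ in the subgraph obtained from $H$ by iteratively deleting internal vertices of degree $\le d$ (where now ``internal'' refers to $B_{m'}(v)$) is either finite and avoids $\vboundary B_{m'}(v)$, in which case trim it; but more directly, intersecting $H$ with $B_m(v)$ and again peeling off low-degree vertices, one obtains a subgraph contradicting $m$-confinement of $v$ — here one uses that a vertex internal to $B_m(v)$ is also internal to $B_{m'}(v)$, so the degree-$\ge d+1$ condition is inherited before peeling. (This is the routine part; I would spell it out carefully since the two notions of ``internal'' must be reconciled.)

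Next, the key input: by \Cref{prop:minimum-degree-percolation-origin}, almost surely $(\ZZ^d)_p$ has no open subgraph $H$ with $\card{V(H)} \ge 2$ in which at most one vertex has degree $\le d$. I claim this implies that, almost surely, every vertex $v$ is $m$-confined for some finite $m = m(v)$. Suppose not: on a positive-probability event there is a vertex $v$ that is \emph{not} $m$-confined for any $m \ge 2$. Then for every $m$ there is an open connected subgraph $H_m \subseteq B_m(v)$ containing $v$, reaching $\vboundary B_m(v)$, with every internal vertex of degree $\ge d+1$ in $H_m$. I would like to pass to a limit. Since $v$ has bounded degree, a standard compactness (König's lemma / diagonal) argument applied to the nested boxes extracts an infinite open subgraph $H_\infty$ containing $v$: concretely, for each radius $\ell$ the sets $H_m \cap B_\ell(v)$ range over finitely many possibilities as $m \to \infty$, so one can choose a subsequence along which they stabilise to some $K_\ell$, with $K_\ell \subseteq K_{\ell+1}$, and set $H_\infty = \bigcup_\ell K_\ell$. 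Because each $H_m$ reaches the boundary of $B_m(v)$ and is connected through $v$, the limit $H_\infty$ is infinite (it contains, for each $m$, a path from $v$ leaving $B_{m-1}(v)$). Every vertex $x \neq v$ of $H_\infty$ is internal to $B_m(v)$ for all large $m$, hence has degree $\ge d+1$ in $H_m$ for those $m$, hence degree $\ge d+1$ in $H_\infty$. Thus $H_\infty$ is an infinite — in particular $\card{V(H_\infty)} \ge 2$ — open subgraph with at most one vertex (namely $v$) of degree $\le d$, contradicting \Cref{prop:minimum-degree-percolation-origin}.

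Finally, ``$(\ZZ^d)_p$ is confined'' means \emph{every} vertex is $m$-confined for some $m \ge 2$, and this holds almost surely because it is a countable intersection over $v \in \ZZ^d$ of the almost-sure events ``$v$ is $m$-confined for some $m$'' established above. (Equivalently, the complement is the countable union $\bigcup_v \bigcap_{m \ge 2}\{v \text{ not } m\text{-confined}\}$, each term of which has probability $0$ by the argument of the previous paragraph.) I expect the main obstacle to be the limiting step: one must ensure that the diagonal extraction produces a subgraph that is genuinely infinite (this is where connectivity through $v$ and the boundary-reaching property of each $H_m$ are essential) and that the local degree conditions survive the passage to the limit. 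Everything else is bookkeeping with the definitions and an appeal to countable subadditivity.
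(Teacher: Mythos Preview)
Your proposal is correct and follows essentially the same route as the paper: assume \Cref{prop:minimum-degree-percolation-origin} holds, suppose some vertex $v$ fails to be $m$-confined for every $m$, and extract via compactness an infinite open subgraph in which every vertex other than $v$ has degree at least $d+1$, contradicting that proposition. The paper's compactness step is phrased as a vertex-by-vertex diagonal selection (enumerate $\ZZ^d\setminus\{v\}$ by distance and pass to subfamilies of the $H_m$), while you stabilise the finite restrictions $H_m\cap B_\ell(v)$; these are equivalent formulations of the same K\"onig-type argument. Your opening monotonicity digression (``$m$-confined implies $m'$-confined for $m'\ge m$'') is not used anywhere in your argument and does not appear in the paper, so you can simply drop it. One point where you are arguably more careful than the paper: you explicitly take the witnesses $H_m$ to be connected through $v$, which is what guarantees the limit object is genuinely infinite---this is how the notion is actually used in the strategy of \Cref{prop:confined-strategy}, even if the formal definition in the paper does not state connectivity.
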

\begin{proof}
Fix $v \in \ZZ^d$.
By \Cref{prop:minimum-degree-percolation-origin}, we can assume that $(\ZZ^d)_p$ almost surely has no open subgraph $H$ with $v \in V(H)$, $\card{V(H)} \geq 2$ and $\deg_H(x) \leq d$ for all $x \in V(H)$, $x \neq v$.
We will show that this is imply that $v$ is $m$-confined in $(\ZZ^d)_p$.

Enumerate all the vertices of $\ZZ^d \setminus \set{v}$ as $v_1, v_2, \dotsc$ such that $d(v,v_i) \leq d(v,v_j)$ whenever $i \leq j$, where $d$ is the graph distance in $\ZZ^d$.
If there is no $m \geq 2$ such that $v$ is $m$-confined in $\ZZ^d$, there is an infinite collection $\cH$ of open subgraphs $H_2, H_3, \dotsc$ where $v \in V(H_m)$, $\card{V(H_m) \cap \vboundary B_m(v)} \geq 1$ and $\deg_{H_m}(x) \geq d + 1$ for any every internal vertex $x \in V(H_m)$.
We can assume that each $H_m$ is induced by a set of vertices $V_m$.

We iteratively construct an induced subgraph $H$ in the following way.
If $v_1$ is infinitely many subgraphs of $\cH$ then put $v_1$ in $V(H)$ and pass to the subfamily of $\cH$ consisting of all subgraphs with $v_1$.
Otherwise, $v_1$ is absent in infinitely many subgraphs of $\cH$, so we pass to that subfamily.
Repeat this process for $v_2, v_3, \dotsc$ and so on to finish the construction of $H$.

We claim that $H$ is an open subgraph where each vertex, possibly apart from $v$, has degree at least $3$.
Indeed, let $x$ be a vertex in the interior of $B_m(v)$ for some $m$.
As the vertices $v_1, v_2, \dotsc$ cover all of $\ZZ^d \setminus \set{v}$, each vertex of $B_m(v)$ has been either placed or not in $H$.
As $x \in H$, then $\cH$ was eventually restricted to a family of subgraphs that all agreed on $B_m(v)$, and $\deg_{H'}(x) \geq 3$ for all $H' \in \cH$ from that point onward.
\end{proof}

\Cref{thm:1dtrivial} and \Cref{thm:11trivial} are now proven.


\section{Maker-Breaker games on hypergraphs}
\label{sec:hyper}

A \indef{hypergraph} $\cH = (V, E)$ is a set $V = V(\cH)$ of \indef{vertices}, together with a family $E = E(\cH)$ of subsets of $V$ called \indef{hyperedges}.
It is often convenient to consider Maker-Breaker games played on an arbitrary hypergraph $\cH$.
In this setting, the board of the game consists of the vertex set of $\cH$ while the winning sets are its hyperedges.

We define a \indef{game configuration} on $\cH$ to be a map $\sigma \from V(\cH) \to \set{\rU, \rM, \rB}$.
Given a configuration $\sigma$, we say that an element of the board, represented by a vertex $v \in V(\cH)$, is \indef{of Maker} if $\sigma(v) = \rM$, \indef{of Breaker} if $\sigma(v) = \rB$, and \indef{unclaimed} if $\sigma(v) = \rU$.
We write $\rM_\sigma \defined \sigma^{-1}(\rM)$ for the set of elements claimed by Maker, $\rB_\sigma \defined \sigma^{-1}(\rB)$ for the set of elements claimed by Breaker and $\rU_\sigma \defined \sigma^{-1}(\rU)$ for the set of unclaimed elements.

Given a game configuration $\sigma$ and an unclaimed element $v \in \rU_\sigma$, denote by $\sigma_{\rM \gets v}$ the game configuration obtained from $\sigma$ by giving the element $v$ to Maker.
We define $\sigma_{\rB \gets v}$ similarly.
That is, we set
\begin{align*}
    \sigma_{\rM \gets v}(u) \defined \begin{cases}
        \rM & \text{if $u = v$}, \\
        \sigma(u) & \text{otherwise},
    \end{cases}
    & &
    \sigma_{\rB \gets v}(u) \defined \begin{cases}
        \rB & \text{if $u = v$}, \\
        \sigma(u) & \text{otherwise}.
    \end{cases}
\end{align*}
Given a game configuration $\sigma$, the \indef{reverse configuration} $\sigma^{\rR}$ is the one obtained from $\sigma$ by reversing the ownership of the elements claimed by Breaker and Maker.
That is,
\begin{equation*}
    \sigma^{\rR}(v) \defined \begin{cases}
        \rM & \text{if $\sigma(v) = \rB$}, \\
        \rB & \text{if $\sigma(v) = \rM$}, \\
        \rU & \text{if $\sigma(v) = \rU$}.
    \end{cases}
\end{equation*}
Observe that if $\sigma$ is distributed as $(\Lambda)_{\alpha, \beta}$, then $\sigma^{\rR}$ is distributed as $(\Lambda)_{\beta, \alpha}$.

Given integers $m, b \geq 1$ and a game configuration $\sigma_0$ on a hypergraph $\cH$ we define the \indef{$(m,b)$ Maker-Breaker game on $\cH$ starting from $\sigma_0$} in the following way.
Players Maker and Breaker alternate turns claiming unclaimed elements of the board $V(\cH)$ and updating the game configuration accordingly.
The initial configuration is $\sigma_0$ and we write $\sigma_i$ for the configuration after $i$ elements have been claimed since the start of the game.
Maker starts and on her turn she claims $m$ unclaimed elements, while Breaker claims $b$ on his turn.
In this way, a sequence $\sigma_0, \sigma_1, \sigma_2, \dotsc$ of game configurations is produced throughout the game, with either $\sigma_{i+1} = (\sigma_i)_{\rM \gets v}$ or $\sigma_{i+1} = (\sigma_i)_{\rB \gets v}$, where $v \in \rU_{\sigma_{i}}$.
If we set $g = m + b$, then the game configuration at the beginning of round $k$ is $\sigma_{gk}$.
The winning sets are the hyperedges $E(\cH)$ and Breaker's goal is to claim at least one element in each winning set.
In other words, Breaker wins in round $k$ if for every $S \in E(\cH)$, we have $\card{S \cap \rB_{\sigma_{gk}}} \geq 1$.
Maker wins if there exists $S \in E(\cH)$ such that $\card{S \cap \rB_{\sigma_i}} = 0$ for every $i \geq 0$.
That is to say, a winning strategy for Maker is a way to guarantee that Breaker will never win in finite time.
If $V(\cH)$ is finite, every element is eventually claimed, so Maker can only win by fully claiming a hyperedge $S \in E(\cH)$.

When we do not mention a particular initial configuration $\sigma_0$, we consider the game configuration where every element is unclaimed, that is, $\sigma_0(v) = \rU$ for all $v \in V(\cH)$.
However, it is important for our applications to consider games with various initial configurations $\sigma_0$.
In fact, we will often consider the case where $\sigma_0$ is random.
We note that any Maker-Breaker game can be modelled as a Maker-Breaker game on a hypergraph.
Indeed, one can always consider the hypergraph where the elements of the board and the winning sets are its vertices and hyperedges, respectively.
This hypergraph is sometimes referred in the literature to as the \indef{hypergraph of the game} (see, e.g., \cite{Hefetz2014-qc}).

The seminal paper of Erd\H{o}s and Selfridge~\cite{Erdos1973-gg} introduced a potential-based strategy and gave a general criterion for a win of Breaker in the unbiased Maker-Breaker game on a finite hypergraph $\cH$.
They showed that if
\begin{equation*}
    \sum_{S \in E(\cH)} 2^{-\card{S}} < \frac{1}{2}
\end{equation*}
then Breaker has a winning strategy for the $(1,1)$ Maker-Breaker on $\cH$.
This result was subsequently generalised by Beck~\cite{Beck1982-rh}, who noted that a similar potential-based strategy works for biased Maker-Breaker games.

\begin{theorem}[Beck]
\label{thm:beck-original}
Let $\cH$ be a finite hypergraph and let $m,b \geq 1$ be integers.
If
\begin{equation}
\label{eq:beck-condition}
    \sum_{S \in E(\cH)} (b+1)^{-\card{S}/m} < \frac{1}{b+1},
\end{equation}
then Breaker has a winning strategy for the $(m,b)$ Maker-Breaker game on $\cH$.
\end{theorem}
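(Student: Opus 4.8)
The plan is to run the weight‑function argument of Erd\H{o}s--Selfridge in Beck's biased form, so I will only need to design a potential and check that Breaker, playing greedily against it, both keeps Maker from ever completing a hyperedge and keeps the potential from growing. Set $t \defined b+1$. For a game configuration $\sigma$ reached during play, call a hyperedge $S \in E(\cH)$ \emph{alive} if $S \cap \rB_\sigma = \emptyset$, and give it the weight $w_\sigma(S) \defined t^{-\card{S \cap \rU_\sigma}/m}$ (and weight $0$ if $S$ is dead, i.e.\ contains an element of Breaker). Let $\Phi(\sigma) \defined \sum_{S \in E(\cH)} w_\sigma(S)$, and for an unclaimed $v \in \rU_\sigma$ let its \emph{danger} be $\mathrm{d}_\sigma(v) \defined \sum_{S \text{ alive},\, v \in S} w_\sigma(S)$. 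Note the all‑unclaimed initial configuration $\sigma_0$ has $\Phi(\sigma_0) = \sum_{S \in E(\cH)} t^{-\card{S}/m}$, which is $< 1/t$ by hypothesis \eqref{eq:beck-condition}.

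I would have Breaker play greedily: on each of his turns he claims his $b$ elements one at a time, each time taking an unclaimed element of maximum current danger. Two facts then drive the analysis. First, if $\Phi < 1/t$ at the start of one of Maker's turns, then $\Phi < 1$ throughout that turn: a single move by Maker multiplies the weight of each alive set through the claimed element by $t^{1/m}$ and changes nothing else, hence multiplies $\Phi$ by at most $t^{1/m}$, so after $j \le m$ of Maker's moves one still has $\Phi < t^{j/m}/t \le 1$; since a hyperedge entirely owned by Maker is alive with weight $t^{0} = 1$, Maker cannot complete a winning set during such a turn. Second, with the greedy strategy $\Phi$ does not increase from the start of one round to the start of the next. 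Granting both facts, induction from $\Phi(\sigma_0) < 1/t$ gives $\Phi < 1/t$ at the start of every round, hence by the first fact Maker never completes a hyperedge; as $\cH$ is finite, play ends with every element claimed, so Breaker has then claimed an element of every hyperedge and wins.

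The main work, and the step I expect to be the real obstacle, is the second fact. For the upper bound on Maker's turn: if Maker's $m$ new elements are distributed so that an alive set $S$ receives $j_S$ of them (so $\sum_S j_S = m$), then $\Phi$ increases by $\sum_{S} (t^{j_S/m} - 1)\, w_\sigma(S)$, and convexity of $x \mapsto t^{x}$ on $[0,1]$ gives $t^{j_S/m} - 1 \le (j_S/m)(t-1) = b\, j_S/m$, so the increase is at most $\tfrac{b}{m}\sum_S j_S w_\sigma(S) = \tfrac{b}{m}\sum_{i=1}^{m} \mathrm{d}_\sigma(v_i) \le b \cdot \max_{v}\mathrm{d}_\sigma(v)$, the maximum over unclaimed $v$ at the start of the round. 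One then has to show Breaker's $b$ greedy moves bring $\Phi$ back down by at least that much, and this matching is the delicate part: a single greedy move need not by itself reclaim a whole $\max_v \mathrm{d}(v)$ worth of potential relative to what Maker gained, since killing a hyperedge lowers the dangers of the remaining elements, and moreover the most dangerous element may be exactly one Maker has just claimed. The resolution is to track the dangers through all $b$ of Breaker's moves and to exploit that the weight base is precisely $t = b+1$, together with the fact (from the first step) that Maker has not completed any set during her turn, so every alive set still has an unclaimed element that Breaker can attack. This is the bookkeeping carried out by Beck; once it is in place the remaining steps are routine.
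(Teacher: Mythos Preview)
Your setup is the right one and matches the paper's: the weight $w_\sigma(S)=\lambda^{\,r(\sigma,S)}$ with $\lambda=(b+1)^{-1/m}$, Breaker playing greedily on $\Delta_{\rB\gets v}(\sigma)=\mathrm{d}_\sigma(v)$, and the observation that $\Phi(\sigma_0)<1/(b+1)$ forces $\Phi<1$ throughout Maker's first turn. Your convexity bound $t^{j/m}-1\le (j/m)(t-1)$ is a perfectly good alternative to the paper's geometric-sum device and yields the same upper bound $\Delta_{\rM}\le b\cdot\max_{v\in\rU_\sigma}\mathrm d_\sigma(v)$ on Maker's gain.

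The gap is exactly where you flag it: you have not shown that Breaker's $b$ greedy moves recover this much, and your chosen bookkeeping makes this genuinely awkward. Because you compare $\Phi$ at the \emph{start} of consecutive rounds, Breaker plays \emph{after} Maker within the window, and the vertex realising $\max_v \mathrm d_\sigma(v)$ may already be one of Maker's $m$ picks; the ``every alive set still has an unclaimed element'' observation does not by itself give a single unclaimed $u$ with $\mathrm d_{\sigma'}(u)\ge\max_v\mathrm d_\sigma(v)$, so the inequality $\Delta_{\rB}\ge b\cdot\max_v\mathrm d_\sigma(v)$ does not follow from what you have written.

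The paper closes the argument by a different organisation. It compares $\Phi$ at the \emph{end} of consecutive Maker turns, so each window is Breaker-then-Maker, and it controls Maker's gain not by convexity against the global maximum but via a reordering lemma (Observation~6.4): after permuting Maker's $m$ moves one has $\Delta_{\rM}\le(1+\lambda^{-1}+\dotsb+\lambda^{-m+1})\,\Delta_{\rM\gets v_1}(\sigma)=b\,\Delta_{\rB\gets v_1}(\sigma)$ for a \emph{specific} Maker move $v_1$ evaluated at the configuration $\sigma$ right after Breaker's previous turn. The point of the Breaker-then-Maker window is that now the link is immediate: $\Delta_{\rB\gets v_1}(\sigma)\le\Delta_{\rB\gets v_1}(\sigma_{-1})\le\Delta_{\rB\gets u_b}(\sigma_{-1})$ by monotonicity and the greedy choice of Breaker's last pick $u_b$, and $\Delta_{\rB}\ge b\,\Delta_{\rB\gets u_b}(\sigma_{-1})$ since greedy picks have non-increasing potentials. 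This is the ``bookkeeping'' you defer to; it is short once you shift the window, but it is not the argument your outline sets up.
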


While we will not use \Cref{thm:beck-original}, the following version of Beck's result will be more adequate for our applications.

\begin{theorem}
\label{thm:beck-infinite}
Let $\cH$ be a hypergraph and let $m, b \geq 1$ be integers.
Let $\sigma_0$ be a game configuration on $\cH$.
If
\begin{equation}
\label{eq:beck-infinite-condition}
    \sum_{\substack{S \in E(\cH)\\S \cap \rB_{\sigma_0} = \emptyset}} (b+1)^{-\card{S \cap \rU_{\sigma}}/m} < \frac{1}{b+1},
\end{equation}
then Breaker has a strategy for the $(m,b)$ Maker-Breaker game on $\cH$ starting from $\sigma_0$ which guarantees that Maker never fully claims a winning set $S \in E(\cH)$.
In particular, Breaker wins if $\cH$ is finite.
\end{theorem}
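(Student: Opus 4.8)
The plan is to reduce \Cref{thm:beck-infinite} to the classical finitary statement \Cref{thm:beck-original} by two successive manipulations: first removing Breaker's already-claimed elements to reduce to a game with a purely unclaimed board (but where some winning sets have been shrunk, reflecting Maker's head start), and then handling infiniteness by a compactness / exhaustion argument. For the first step, given the configuration $\sigma_0$, define a new hypergraph $\cH'$ whose vertex set is $\rU_{\sigma_0}$ and whose hyperedges are the sets $S \cap \rU_{\sigma_0}$ for those $S \in E(\cH)$ with $S \cap \rB_{\sigma_0} = \emptyset$ (we may discard any $S$ that already meets $\rB_{\sigma_0}$, since Breaker has already blocked it, and we may also discard $S$ if $S \subseteq \rM_{\sigma_0}$, as then Maker has already won and the hypothesis \eqref{eq:beck-infinite-condition} forces this not to happen). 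A strategy for Breaker on $\cH'$ in which he never lets Maker claim all of some hyperedge translates verbatim into a strategy on $\cH$ starting from $\sigma_0$: Breaker simply ignores the elements already owned, plays on $\rU_{\sigma_0}$, and whenever it is his turn with fewer than $b$ unclaimed elements remaining he claims arbitrary leftover elements. The key point is that condition \eqref{eq:beck-infinite-condition} is exactly condition \eqref{eq:beck-condition} for the hypergraph $\cH'$, since $\sum_{S \in E(\cH')} (b+1)^{-\card{S}/m} = \sum_{S \cap \rB_{\sigma_0} = \emptyset} (b+1)^{-\card{S \cap \rU_{\sigma_0}}/m}$.

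For the second step — making Beck's finitary result apply when $\cH'$ is infinite — I would use a direct potential-based argument rather than a compactness trick, since Maker wins only by never losing and an ad hoc limit of finite strategies can fail to be well defined across infinitely many rounds. Following Beck, Breaker maintains the potential $\Phi(\sigma) = \sum_{S} (b+1)^{-\card{S \cap \rU_\sigma}/m}$, where the sum runs over hyperedges not yet blocked by Breaker. Crucially, \eqref{eq:beck-infinite-condition} guarantees $\Phi(\sigma_0) < 1/(b+1)$, and each individual term is at most $1$, so only finitely many hyperedges contribute more than any fixed $\delta>0$; this control is what lets the infinite sum be handled. In a full round Maker claims $m$ elements and Breaker claims $b$; a standard averaging argument (exactly as in \Cref{thm:beck-original}) shows Breaker can always choose his $b$ elements so that the net change in $\Phi$ over the round is non-positive, provided $\Phi$ started below $1/(b+1)$. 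Hence $\Phi(\sigma_{gk}) < 1/(b+1) \le 1$ for every $k$. But if Maker ever fully claimed a hyperedge $S$ (necessarily one never touched by Breaker, else it would have been dropped from the sum), that hyperedge would contribute $(b+1)^{-0/m} = 1$ to $\Phi$, forcing $\Phi \ge 1$, a contradiction. Therefore Maker never fully claims a winning set, which is precisely the asserted conclusion; and if $\cH$ (hence $\cH'$) is finite, every element is eventually claimed, so "Maker never claims a full hyperedge" upgrades to "Breaker wins".

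The main obstacle is making the potential argument rigorous over infinitely many rounds: one must check that $\Phi(\sigma)$ is a well-defined finite real number at every stage (it is, because $\Phi(\sigma_0)$ is finite and $\Phi$ is non-increasing along Breaker's play), and that the round-by-round averaging step of Beck's proof only ever involves finitely many hyperedges in an essential way — the hyperedges containing any particular element Maker might claim are the only ones whose terms increase, and although there can be infinitely many such, their total contribution to $\Phi$ is bounded by $\Phi(\sigma) < 1/(b+1)$, so the inequalities driving Breaker's choice remain valid. I would present this as: reduce to $\cH'$; invoke a lemma (essentially reproving Beck's bound but tracking only the finiteness subtleties) that Breaker can keep $\Phi$ below $1/(b+1)$ forever; conclude. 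I expect the reduction step to be routine and the potential-maintenance step to require the most care, though it is morally identical to \Cref{thm:beck-original}; indeed the paper flags that the needed extension of Beck's theorem to infinite boards is exactly this, and defers the detailed argument to \Cref{sec:analysis}.
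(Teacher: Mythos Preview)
Your proposal is correct and follows essentially the same route as the paper: reduce to the auxiliary hypergraph $\cH'$ on $\rU_{\sigma_0}$ (the paper calls this difference ``purely cosmetic''), then run Beck's potential argument with $\Phi = w_\lambda$ for $\lambda = (b+1)^{-1/m}$, showing Breaker's greedy choice keeps the potential below $1$ at the end of every Maker turn. One small imprecision: you write $\Phi(\sigma_{gk}) < 1/(b+1)$, but what the argument actually yields (and all that is needed) is $\Phi(\sigma_{gk+m}) < 1$ for every $k$; the bound $1/(b+1)$ is only used at $\sigma_0$ to survive Maker's opening $m$ moves via $\Phi(\sigma_m) \le \lambda^{-m}\Phi(\sigma_0) < 1$.
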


We must emphasise that the proof of \Cref{thm:beck-original} works for \Cref{thm:beck-infinite} as well.
In fact, the difference between the conditions \eqref{eq:beck-condition} and \eqref{eq:beck-infinite-condition} is purely cosmetic.
Indeed, \eqref{eq:beck-infinite-condition} is the same as \eqref{eq:beck-condition} on the auxiliary hypergraph $\cH'$ obtained from $\cH$ by deleting all hyperedges $S$ with $S \cap \rB_\sigma \neq \emptyset$, and by removing all vertices $v$ with $\sigma(v) = \rM$ from each remaining hyperedge.
The fact that we are considering an infinite hypergraph presents no real difficulty as long as the sum in \eqref{eq:beck-infinite-condition} is finite.
For the benefit of the reader, we include a detailed proof of \Cref{thm:beck-infinite} in \Cref{sec:analysis} as we believe it is instructive to see the analysis of the potential-based strategy.

We also remark that the proof of \Cref{thm:beck-infinite} can be trivially extended to handle $c$-boosted games, as long as the right-hand side of \eqref{eq:beck-infinite-condition} is strengthened from $1/(b+1)$ to $1/(b+1)^{(m+c)/m}$, see \Cref{cor:beck-infinite-boosted}.

In order to use \Cref{thm:beck-infinite} in the analysis of the Maker-Breaker percolation games, we consider an appropriate auxiliary hypergraph $\cH$.
Generally speaking, there are two different hypergraphs that will be relevant for us, depending on whether we want to analyse a strategy for Maker or for Breaker.

If $\Lambda$ is a connected locally finite\footnote{A graph is locally finite if all the vertices have finite degrees.} graph, $v_0 \in V(\Lambda)$ and $N \geq 0$, then we introduce a hypergraph $\cP_{\Lambda,v_0}^N$ defined as follows.
Consider all the self-avoiding walks in $\Lambda$ that start from $v_0$ and have length exactly $N$.
Each of these walks corresponds to a separate hyperedge in $\cP_{\Lambda,v_0}^N$ formed by all the edges of the walk.
The vertex set of $\cP_{\Lambda,v_0}^N$ consists of all the edges of $\Lambda$ that are contained in at least one such self-avoiding walk.
As $\Lambda$ is locally finite, $\cP_{\Lambda,v_0}^N$ has finitely many vertices.
We use $\cP_{\Lambda,v_0}^N$ for the analysis of Breaker's winning strategy, since if he can claim one edge in each self-avoiding walk of length $N$ from $v_0$ he wins the Maker-Breaker percolation game.

\begin{observation}
\label{obs:breaker-hypergraph}
Suppose that $\sigma$ is a game configuration on $\Lambda$ with the property that for every $v_0 \in V(\Lambda)$, there is $N$ such that Breaker has a winning strategy for the $(m,b)$ Maker-Breaker game on $\cP_{\Lambda,v_0}^N$.
Then Breaker has a winning strategy for the $(m,b)$ Maker-Breaker percolation game on $\Lambda$ with initial configuration $\sigma$.
\end{observation}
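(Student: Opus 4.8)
The plan is to convert a collection of Breaker strategies on the finite hypergraphs $\cP_{\Lambda,v_0}^N$ into a single global strategy on the infinite board $\Lambda$, using a standard "simulation" argument together with a compactness observation to handle the infinitely many edges of $\Lambda$ that do not appear in any hyperedge. First I would fix the vertex $v_0$ chosen by Maker at round zero and let $N = N(v_0)$ be the value whose existence is guaranteed by the hypothesis, so that Breaker has a winning strategy $\mathcal{S}$ on $\cP_{\Lambda,v_0}^N$ starting from the restriction of $\sigma$ to the vertex set of $\cP_{\Lambda,v_0}^N$ (i.e.\ the edges of $\Lambda$ lying on some self-avoiding walk of length $N$ from $v_0$). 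The point to keep in mind is that $\cP_{\Lambda,v_0}^N$ has only finitely many vertices, so $\mathcal{S}$ genuinely terminates with Breaker having claimed at least one element of every hyperedge.

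The main step is the simulation. Breaker plays on $\Lambda$ as follows: he maintains an auxiliary game on $\cP_{\Lambda,v_0}^N$ in his head. Whenever Maker claims an edge $e$ of $\Lambda$, if $e$ is a vertex of $\cP_{\Lambda,v_0}^N$ he feeds that move into the auxiliary game; if $e$ is not a vertex of $\cP_{\Lambda,v_0}^N$ he simply ignores it in the auxiliary game. Breaker then consults $\mathcal{S}$ for his response, claims the corresponding edges of $\Lambda$ (which are automatically still unclaimed, since $\mathcal{S}$ only responds with unclaimed vertices of the auxiliary game and the auxiliary game faithfully tracks ownership of those edges), and if he has claimed fewer than $b$ edges — because $\mathcal{S}$ asked for fewer, or because the auxiliary game has ended — he spends the remaining moves on arbitrary unclaimed edges of $\Lambda$ (outside $\cP_{\Lambda,v_0}^N$ if possible, to avoid interfering with the invariant). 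One subtlety to check: Maker might claim an edge of $\cP_{\Lambda,v_0}^N$ that $\mathcal{S}$ has, in the auxiliary game, already been claimed — but this cannot happen, because every edge Breaker claims in $\Lambda$ that lies in $\cP_{\Lambda,v_0}^N$ is recorded as Breaker's in the auxiliary game, and Maker's moves are recorded there too, so the auxiliary configuration is always a sub-restriction of the true configuration of $\Lambda$ restricted to $\cP_{\Lambda,v_0}^N$. Since $\mathcal{S}$ is a winning strategy on the finite hypergraph, after finitely many rounds Breaker has claimed at least one edge on every self-avoiding walk of length $N$ from $v_0$.

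Finally I would close the argument: at that point every self-avoiding walk from $v_0$ of length $N$ contains a Breaker-claimed edge, so in the graph $\Lambda$ with Breaker's edges removed there is no self-avoiding walk of length $N$ from $v_0$, hence the component of $v_0$ has at most $N$ edges in its spanning structure and in particular is finite — so Breaker has won by that round. This is where the hypothesis "$\Lambda$ is connected locally finite" (implicitly in force, as in the definition of $\cP_{\Lambda,v_0}^N$) is used, to ensure $\cP_{\Lambda,v_0}^N$ is finite and the simulation is well-defined. The only mild obstacle is bookkeeping: verifying that the edges $\mathcal{S}$ instructs Breaker to claim are always available in the real game, which follows from the invariant that the auxiliary configuration agrees with the real one on $V(\cP_{\Lambda,v_0}^N)$ for all edges ever touched; everything else is routine. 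I would remark that the same simulation works verbatim for the $c$-boosted game, since a boost by Maker on edges outside $\cP_{\Lambda,v_0}^N$ is ignored and a boost on edges inside it is just fed to $\mathcal{S}$, which is assumed to handle the boosted auxiliary game.
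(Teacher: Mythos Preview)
Your argument is correct and is exactly the natural way to make the observation precise. Note, however, that the paper gives no proof at all for this statement: it is recorded as an \emph{Observation} and treated as self-evident from the definitions, so there is no ``paper's own proof'' to compare against. Your simulation argument is the standard unpacking of why the observation holds.

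One small point of bookkeeping you gloss over: when some of Maker's $m$ moves in a given round fall outside $V(\cP_{\Lambda,v_0}^N)$, the auxiliary game receives fewer than $m$ Maker moves that round, so strictly speaking $\mathcal{S}$ is being consulted on an out-of-turn position. The routine fix is to have Breaker imagine that Maker spent those missing moves on arbitrary unclaimed vertices of $\cP_{\Lambda,v_0}^N$; since extra Maker moves can only hurt Breaker, and $\mathcal{S}$ still wins against this stronger imaginary Maker, the real game is won a fortiori. Likewise, if Breaker is ever forced to dump a surplus move inside $V(\cP_{\Lambda,v_0}^N)$, that edge should be recorded in the auxiliary game as a Breaker edge, which again can only help. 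With these two cosmetic adjustments the invariant ``the auxiliary configuration agrees with the real configuration on $V(\cP_{\Lambda,v_0}^N)$, up to extra edges that favour Breaker'' is maintained throughout, and your conclusion goes through cleanly.
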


We now define a hypergraph which will be useful for the analysis of Maker's win.
Let $\Lambda$ be a connected infinite graph and $v_0 \in V(\Lambda)$, then the hypergraph $\cC_{\Lambda,v_0}$ is defined as follows.
The vertices of $\cC_{\Lambda,v_0}$ are the edges $E(\Lambda)$.
The hyperedges of $\cC_{\Lambda,v_0}$ are all the minimal finite subsets of $E(\Lambda)$ whose removal separates $v_0$ from the infinite component.
When $\Lambda$ is a plane graph, this can be better understood in terms of the dual graph $\Lambda'$.
Indeed, a game configuration $\sigma \from E(\Lambda) \to \set{\rM, \rB, \rU}$ in $\Lambda$ can be associated with a dual configuration $\sigma' \from E(\Lambda') \to \set{\rM, \rB, \rU}$ characterised by $\sigma_{e} = (\sigma')_{e'}$.
Then, if $C \in \cC_{\Lambda,v_0}$ is a hyperedges, then the dual of the edges in $C$ are edges in self-avoiding dual cycle around $v_0$.
Maker can exploit this duality and play in $\Lambda$ as Breaker would play in $\Lambda'$ in the hypergraph with the dual cycles as hyperedges.
We prefer, however, to keep the game being played on the original graph $\Lambda$ and freely use the language of the dual graph when it is clearer to do so.

\begin{observation}
\label{obs:maker-hypergraph}
Suppose that $\sigma$ is a game configuration on $\Lambda$ where there is $v_0 \in V(\Lambda)$ such that Breaker has a strategy for the $(b,m)$ Maker-Breaker game on $\cC_{\Lambda,v_0}$ in which Maker never fully claims a winning set.
Then Maker has a winning strategy for the $(m,b)$ Maker-Breaker percolation game on $\Lambda$ with initial configuration $\sigma^{\rR}$.
\end{observation}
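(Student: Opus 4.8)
The plan is to exhibit a winning strategy for Maker in the percolation game on $\Lambda$ with initial configuration $\sigma^{\rR}$ by directly simulating Breaker's strategy in the $(b,m)$ game on the hypergraph $\cC_{\Lambda, v_0}$. First I would set up the correspondence between the two games. Recall that the vertices of $\cC_{\Lambda, v_0}$ are exactly the edges $E(\Lambda)$, and its hyperedges are the minimal finite edge-cuts separating $v_0$ from infinity. Maker, playing the percolation game, will pretend to be ``Breaker'' in the auxiliary game on $\cC_{\Lambda, v_0}$, while the percolation-Breaker's moves will be fed to her as the moves of ``Maker'' in the auxiliary game. The ownership reversal in the statement is precisely what makes this bookkeeping consistent: if $\sigma$ is the (fixed, non-random) configuration relative to which we have the auxiliary strategy, then in the percolation game the initial configuration is $\sigma^{\rR}$, so an edge $e$ with $\sigma^{\rR}_e = \rM$ (already Maker's in the percolation game) corresponds to an element $e$ with $\sigma_e = \rB$ (already ``Breaker's'' in the auxiliary game), and conversely. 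Since $m$ and $b$ are swapped between the two games as well, when the percolation-Breaker claims $b$ edges in a round, this is exactly a move of ``Maker'' (who claims $b$ elements) in the $(b,m)$ auxiliary game; and when percolation-Maker claims $m$ edges, this is a move of ``Breaker'' (who claims $m$ elements) in the auxiliary game.

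Next I would describe the simulation step by step. At the start, Maker picks the vertex $v_0$ given by the hypothesis. She then maintains, as an invariant, that the set of edges she has claimed in the percolation game together with the edges that were $\rM$ in $\sigma^{\rR}$ equals the set $\rB_\tau$ of ``Breaker''-owned elements in the current auxiliary configuration $\tau$, and likewise percolation-Breaker's edges plus the $\rB$-edges of $\sigma^{\rR}$ equal $\rM_\tau$. Since percolation-Maker moves first in the percolation game and ``Breaker'' moves second in the auxiliary game, there is a slight parity mismatch to handle: I would simply have Maker, on her very first move (and on each of her subsequent moves), respond to percolation-Breaker's \emph{previous} move. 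Concretely, Maker's turn proceeds as follows: she reads off the (at most $b$) edges just claimed by percolation-Breaker, treats these as ``Maker's'' move in the auxiliary game, consults the auxiliary Breaker-strategy for the response, which is a set of at most $m$ elements of $\cC_{\Lambda, v_0}$, i.e. at most $m$ edges of $\Lambda$, and claims exactly those edges. (On her very first move, before percolation-Breaker has moved, she may claim arbitrary unclaimed edges, or just pass in spirit by claiming irrelevant edges; these extra edges only help her. If at some point the auxiliary strategy wants to claim a larger number than remains available — e.g. because of the boost-like slack introduced by the arbitrary first move — she claims whatever subset is still unclaimed, which again only helps.)

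The key point to verify is that this simulation is legal and that it delivers a win. Legality: every element the auxiliary strategy instructs ``Breaker'' to claim is, by the rules of that game, currently unclaimed in $\tau$; by the invariant, the corresponding edge of $\Lambda$ is currently unclaimed in the percolation game, so Maker can indeed claim it. Winning: by hypothesis, the auxiliary Breaker-strategy guarantees that ``Maker'' never fully claims a winning set of $\cC_{\Lambda, v_0}$, i.e. there is always some edge in every minimal $v_0$-cut that is \emph{not} owned by ``Maker''. Translating through the correspondence, this says that in the percolation game, for every minimal finite edge-cut $C$ separating $v_0$ from infinity, at least one edge of $C$ is not owned by percolation-Breaker (it is either Maker's or unclaimed), for all time. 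Hence percolation-Breaker never disconnects $v_0$ from the infinite component — which is exactly the statement that percolation-Maker wins. The only genuine subtlety, and the step I expect to require the most care, is the first-move parity adjustment together with the possibility that the auxiliary strategy asks to claim more elements than are available in a given round; I would phrase this by noting that a strategy for ``Breaker'' in the $(b,m)$ game is, without loss of generality, still winning if ``Breaker'' is occasionally allowed (but not forced) to claim \emph{more} than $m$ elements or to skip claiming some, since extra Breaker-claimed elements only make it harder for ``Maker'' to complete a winning set — this monotonicity is immediate from the definition of the game and lets the simulation absorb the one-move offset cleanly.
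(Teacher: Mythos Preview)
Your proposal is correct and follows the natural role-reversal argument: percolation-Maker simulates auxiliary-Breaker, percolation-Breaker's moves are fed in as auxiliary-Maker's moves, the configuration reversal $\sigma \leftrightarrow \sigma^{\rR}$ matches the ownership swap, and the $(m,b) \leftrightarrow (b,m)$ bias swap matches the move counts. Your handling of the first-move parity offset via monotonicity (extra Breaker-owned elements only help Breaker in the auxiliary game) is the standard and correct fix.

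The paper does not actually give a proof of this observation; it is stated as self-evident, with the surrounding discussion simply noting that ``Maker can exploit this duality and play in $\Lambda$ as Breaker would play'' in the dual hypergraph. Your write-up is therefore more detailed than anything in the paper for this particular statement. The closest explicit argument in the paper is the proof of the refined \Cref{obs:maker-hypergraph-refined}, where Maker splits the board into two regions and plays two games in parallel; there the same simulation idea appears as one of the two components, and the parity issue is absorbed by Maker spending her first move on the protective ladder. So your approach is essentially the intended one, just spelled out in full.
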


In other words, if Breaker almost surely has a winning strategy for the $(b, m)$ Maker-Breaker game on $\cC_{\Lambda,v_0}$ with initial configuration drawn from $(\Lambda)_{\beta, \alpha}$, then Maker almost surely has a winning strategy for the $(m, b)$ game on $(\Lambda)_{\alpha, \beta}$.

Due to technical reasons that will arise later, we restrict our attention to the case $\Lambda = \ZZ^2$ when considering the potential strategy for Maker.
We simplify the notation and write $\cC_{v_0}$ for $\cC_{\ZZ^2,v_0}$ moving forward.
In this case, the hyperedges of $\cC_{v_0}$ correspond to cycles in the plane dual graph of $\ZZ^2$.

In \Cref{sec:potential}, we will use the hypergraphs $\cP_{\Lambda,v_0}^N$ and $\cC_{\ZZ^2,v_0}$ to analyse winning strategies for Maker and Breaker for a variety of Maker-Breaker percolation games.
While \Cref{obs:breaker-hypergraph} gives us a successful approach for Breaker's win, \Cref{obs:maker-hypergraph} does not quite work for Maker if there is an abundance of short dual cycles.
We fix this issue by showing how Maker can guarantee that Breaker cannot fully claim a very short dual cycle in an auxiliary game.


\section{The potential-based strategies}
\label{sec:potential}

Given a connected infinite vertex-transitive graph $\Lambda$ and parameters $\alpha, \beta \geq 0$, $\alpha + \beta \leq 1$, we consider potential-based strategies for the $(m,b)$ game on $(\Lambda)_{\alpha,\beta}$.
In view of \Cref{obs:breaker-hypergraph} and \Cref{obs:maker-hypergraph}, we will use potential-based strategies for both players in the Maker-Breaker games on the auxiliary hypergraphs $\cP_{\Lambda,v_0}^N$ and $\cC_{\Lambda,v_0}$.

\subsection{Winning the game as Breaker}

To win the $(m,b)$ Maker-Breaker percolation game on $\Lambda$, Breaker has to claim one edge in every long enough self-avoiding walk in $\Lambda$.
Indeed, given an initial configuration $\sigma_0$ and an origin $v_0 \in V(\Lambda)$, Breaker can choose a suitable length $N$ and hope he can claim one edge in each self-avoiding walk from $v_0$ of length $N$.
In other words, Breaker can play the $(m,b)$ Maker-Breaker game on $\cP_{\Lambda,v_0}^N$.
To this effect, it is sufficient to check that condition \eqref{eq:beck-infinite-condition} in \Cref{thm:beck-infinite} holds.

The core idea of a potential-based strategy is that we provide Breaker with a way of evaluating how `urgent' it is to play an edge $e \in E(\Lambda)$ given a game configuration $\sigma$.
Breaker then prioritises playing the most `urgent' edges.
Given a self-avoiding walk $P \in E(\cP_{\Lambda,v_0}^N)$ and a game configuration $\sigma$, define
\begin{equation*}
    r(\sigma, P) \defined
    \begin{cases}
        \card{P \cap \rU_\sigma} & \text{if $P \subseteq \rU_\sigma \cup \rM_\sigma$}, \\
        \infty & \text{otherwise,}
    \end{cases}
\end{equation*}
and for a fixed $0 < \lambda < 1$ to be chosen later, define the \indef{danger} of a path $P$ to be
\begin{equation*}
    w_\lambda(\sigma, P) \defined  \lambda^{r(\sigma, P)}.
\end{equation*}
The total danger associated with a configuration $\sigma$ is then defined as
\begin{equation*}
    w_\lambda(\sigma, \cP_{\Lambda,v_0}^N) \defined \sum_{P \in E(\cP_{\Lambda,v_0}^N)} w_\lambda(\sigma, P).
\end{equation*}
Breaker's goal is to keep $w_\lambda$ as low as possible.
Indeed, if $w_\lambda(\sigma, \cP_{\Lambda,v_0}^N)$ is less than $1$ throughout the whole game, Breaker wins.
See \Cref{sec:analysis} for the detailed analysis.

Of course, there is no guarantee that the strategy of greedily minimising $w_\lambda$ is successful.
From \Cref{thm:beck-infinite}, together with \Cref{obs:breaker-hypergraph}, we know that as long as $\lambda = (b+1)^{-1/m}$, if regardless of the choice of $v_0$ there is an $N = N(v_0)$ with $w_\lambda(\sigma_0, \cP_{\Lambda,v_0}^N) < 1/(b+1)$, then the greedily minimising $w_\lambda$ is a winning strategy for Breaker for the $(m,b)$ Maker-Breaker percolation game on $\Lambda$ starting from $\sigma_0$.
It is left for us to analyse how small can $w_\lambda(\sigma_0, \cP_{\Lambda,v_0}^N)$ be when $\sigma_0$ is drawn from $(\Lambda)_{\alpha,\beta}$.

\begin{proposition}
\label{prop:weight-saw}
Let $\Lambda$ be an infinite vertex-transitive graph with a connective constant bounded from above by $\kappa$. 
Let $\alpha, \beta \geq 0$, $\alpha + \beta \leq 1$ and let $0 < \lambda < 1$ be such that $\p[\big]{\lambda(1 - \alpha - \beta) + \alpha} \kappa < 1$.
Then almost surely, a random game configuration $\sigma$ distributed as $(\Lambda)_{\alpha,\beta}$ has the property that for any $v_0 \in V(\Lambda)$ and any $\delta > 0$ , there exists an integer $N$ for which $w_\lambda(\sigma, \cP_{\Lambda,v_0}^N) < \delta$.
\end{proposition}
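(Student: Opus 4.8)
The plan is to compute the expected total danger of a random configuration, show it tends to zero thanks to the connective constant bound, and then transfer this to an almost sure statement, first for a fixed origin and then for all origins simultaneously by countability.

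First I would note that the danger of a single self-avoiding walk factorises over its edges. Writing $g(\rM) \defined 1$, $g(\rU) \defined \lambda$ and $g(\rB) \defined 0$, one has $w_\lambda(\sigma, P) = \prod_{e \in P} g(\sigma_e)$ for every self-avoiding walk $P$; here the value $g(\rB) = 0$ exactly encodes the convention $\lambda^{\infty} = 0$ that kills any walk containing a Breaker edge, and otherwise $\prod_{e\in P} g(\sigma_e) = \lambda^{\card{P \cap \rU_\sigma}} = \lambda^{r(\sigma,P)}$. Since $P$ is self-avoiding its edges are distinct, so under $(\Lambda)_{\alpha,\beta}$ the states $(\sigma_e)_{e\in P}$ are independent, whence
\[
  \Expec\bigl[w_\lambda(\sigma, P)\bigr] = \prod_{e\in P}\Expec[g(\sigma_e)] = \bigl(\alpha + \lambda(1-\alpha-\beta)\bigr)^{\card{P}} = q^{\,\card{P}},
\]
where $q \defined \alpha + \lambda(1-\alpha-\beta)$. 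As $\Lambda$ is vertex-transitive, the number of self-avoiding walks of length $N$ from $v_0$ equals $\card{\cP^N}$, so summing over $E(\cP_{\Lambda,v_0}^N)$ gives $\Expec\bigl[w_\lambda(\sigma, \cP_{\Lambda,v_0}^N)\bigr] = \card{\cP^N}\, q^N$.

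Next I would bring in the connective constant: since $\card{\cP^N}^{1/N} \to \kappa$, for every $\eps > 0$ there is $N_0$ with $\card{\cP^N} \le (\kappa+\eps)^N$ for all $N \ge N_0$. The hypothesis is precisely $q\kappa < 1$, so we may fix $\eps$ small enough that $c \defined (\kappa+\eps)q < 1$; then $\Expec\bigl[w_\lambda(\sigma, \cP_{\Lambda,v_0}^N)\bigr] \le c^N$ for $N \ge N_0$, and in particular $\sum_{N} \Expec\bigl[w_\lambda(\sigma, \cP_{\Lambda,v_0}^N)\bigr] < \infty$. Fixing $v_0$, the monotone convergence theorem (applied to the nonnegative partial sums) yields $\Expec\bigl[\sum_N w_\lambda(\sigma, \cP_{\Lambda,v_0}^N)\bigr] < \infty$, so almost surely $\sum_N w_\lambda(\sigma, \cP_{\Lambda,v_0}^N) < \infty$, hence $w_\lambda(\sigma, \cP_{\Lambda,v_0}^N) \to 0$ as $N\to\infty$; equivalently, one could just apply Fatou to $\liminf_N w_\lambda(\sigma, \cP_{\Lambda,v_0}^N)$. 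Either way, almost surely, for every $\delta > 0$ there is an $N$ with $w_\lambda(\sigma, \cP_{\Lambda,v_0}^N) < \delta$.

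Finally I would upgrade this to hold simultaneously for all origins. Since $\Lambda$ is connected and locally finite, $V(\Lambda)$ is countable, and the event described in the previous paragraph has probability one for each fixed $v_0$; intersecting over the countably many choices of $v_0$ gives the claim. I do not expect a genuine obstacle here: the only points needing care are the product formula for $w_\lambda(\sigma,P)$ together with the $\lambda^{\infty}=0$ convention, and ordering the quantifiers correctly so that the countable intersection over $v_0$ is taken after fixing a single full-measure event for each $v_0$.
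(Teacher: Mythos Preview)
Your argument is correct and follows essentially the same route as the paper: compute $\Expec[w_\lambda(\sigma,P)] = q^N$ with $q = \alpha + \lambda(1-\alpha-\beta)$, bound the total expected danger by $((\kappa+\eps)q)^N$, transfer to an almost-sure statement for fixed $v_0$, and then take a countable union over $v_0 \in V(\Lambda)$. The only cosmetic differences are that the paper computes the expectation via a binomial expansion rather than your product factorisation, and passes to the almost-sure conclusion via Markov's inequality rather than your summability argument; one small slip worth noting is that $\kappa$ is only assumed to be an \emph{upper bound} on the connective constant, so $\card{\cP^N}^{1/N}$ need not converge to $\kappa$ itself, though the inequality $\card{\cP^N} \le (\kappa+\eps)^N$ you actually use remains valid.
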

\begin{proof}
Let $\sigma$ be a random game configuration distributed as $(\Lambda)_{\alpha,\beta}$.
That is, every edge $e \in E(\Lambda)$ is set independently to $e \in \rM_\sigma$ with probability $\alpha$, to $e \in \rB_\sigma$ with probability $\beta$ and to $e \in \rU_\sigma$ with probability $1 - \alpha - \beta$.

Since $\Lambda$ is vertex-transitive, we have $\card{E(\cP_{\Lambda,v}^{N + M})} \leq \card{E(\cP_{\Lambda,v}^N)} \card{E(\cP_{\Lambda,v}^M)}$, therefore $\card{E(\cP_{\Lambda,v}^N)}^{1/N}$ converges to the \indef{connective constant} of $\Lambda$.
Since $\kappa$ is an upper bound on the connective constant, for every $\eps > 0$ there exists $N_0(\eps)$ such that $\card{\cP_{\Lambda,v}^N} \leq (\kappa + \eps)^N$ for every $N \geq N_0(\eps)$.
As $\kappa < 1 / \p[\big]{\lambda(1 - \alpha - \beta) + \alpha}$, we can choose $\eps > 0$ such that $\kappa + \eps < 1 / \p[\big]{\lambda(1 - \alpha - \beta) + \alpha}$.

Note that $w_\lambda(\sigma,P) = \ind_{\set{P \cap \rB_\sigma = \emptyset}} \lambda^{\card{P \cap \rU_\sigma}}$.
Therefore,
\begin{align*}
    \expec[\big]{w_\lambda(\sigma,P)}
    &= \sum_{0 \leq k \leq N} \lambda^k \pbond[\big]{\Lambda,\alpha,\beta}{\card{P \cap \rU_\sigma} = k, \card{P \cap \rB_\sigma} = 0} \\
    &= \sum_{0 \leq k \leq N} \lambda^k \binom{N}{k}(1 - \alpha - \beta)^k \alpha^{N-k}
    = \p[\big]{\lambda(1 - \alpha - \beta) + \alpha }^N.
\end{align*}
Thus, for $N \geq N_0(\eps)$, we have
\begin{align*}
    \expec[\big]{w_\lambda(\sigma, \cP_{\Lambda,v}^N)}
    &= \sum_{P \in E(\cP_{\Lambda,v}^N)} \expec[\big]{w_\lambda(\sigma, P)} = \card{\cP_{\Lambda,v}^N} \p[\big]{\lambda (1 - \alpha - \beta) + \alpha}^N \\
    &\leq \p[\Big]{(\kappa + \eps)\p[\big]{\lambda (1 - \alpha - \beta) + \alpha}}^N.
\end{align*}

Let $\cE_v^N$ be the event that $w_\lambda(\sigma, \cP_{\Lambda,v}^N) \geq \delta$.
Then $\cE_v = \bigcap_{N \geq 0}\cE_v^N$ is the event that $w_\lambda(\sigma, \cP_{\Lambda,v}^N) \geq \delta$ holds for all $N$.
By Markov's inequality, we have
\begin{equation*}
    \pbond{\Lambda,\alpha,\beta}{\cE_v}
    \leq \pbond{\Lambda,\alpha,\beta}{\cE_v^N}
    \leq \delta^{-1}\p[\big]{(\kappa + \eps)\p[\big]{\lambda(1 - \alpha - \beta) + \alpha}}^N
\end{equation*}
for $N \geq N_0(\eps)$.
As $(\kappa + \eps)\p[\big]{\lambda(1 - \alpha - \beta) + \alpha} < 1$, we have $\pbond{\Lambda,\alpha,\beta}{\cE_v} = 0$ for all $v \in V(\Lambda)$.
Therefore, if $\cE = \bigcup_{v \in V(\Lambda)} \cE_v$ is the event that there is some vertex $v$ such that $w_\lambda(\sigma, \cP_{\Lambda,v}^N) \geq \delta$ for all $N$, then $\pbond{\Lambda,\alpha,\beta}{\cE} \leq \sum_{v \in V(\Lambda)} \pbond{\Lambda,\alpha,\beta}{\cE_v} = 0$, as required.
\end{proof}

Assuming \Cref{thm:beck-infinite}, we obtain \Cref{thm:potential-breaker-symmetric} as a corollary.

\begin{proof}[Proof of \Cref{thm:potential-breaker-symmetric}]
Consider the $c$-boosted $(m,b)$ Maker-Breaker percolation game on $(\Lambda)_{\alpha,\beta}$.
Setting $\lambda \defined (b+1)^{-1/m}$ in the condition $\kappa < 1/\p[\big]{\lambda(1 - \alpha - \beta) + \alpha}$, we obtain \eqref{eq:mb-breaker-ab}.
Let $\sigma_0$ be a random game configuration distributed as $(\Lambda)_{\alpha, \beta}$ and let $v_0$ be the vertex chosen by Maker before the game starts.
Thus, \Cref{prop:weight-saw} gives that almost surely, there exists an integer $N \geq 0$ such that $w_\lambda(\sigma, \cP_{\Lambda,v_0}^N) < 1/(b+1)^{(m+c)/m}$.
With this condition, by \Cref{cor:beck-infinite-boosted}, Breaker has a winning strategy for the $c$-boosted $(m,b)$ Maker-Breaker game on $\cP_{\Lambda, v_0}^N$.
By \Cref{obs:breaker-hypergraph}, we have that almost surely Breaker has a winning strategy for the $c$-boosted $(m, b)$ Maker-Breaker percolation game on $(\Lambda)_{\alpha, \beta}$, as claimed.
\end{proof}

\subsection{Winning the game as Maker}

We now proceed to a similar strategy for Maker when $\Lambda = \ZZ^2$.
In view of \Cref{obs:maker-hypergraph}, we are now concerned with the hypergraph $\cC_{v_0}$.
We further restrict to pairs $(m,b)$ with $m \geq 2$ and $b = 1$.

We maintain the core idea from the strategy of Breaker: we evaluate how `urgent' edges are in need to be played and greedily minimise our estimation of how bad a configuration is.
Indeed, to win as Maker, we play as Breaker in the auxiliary game on $\cC_{v_0}$.
Given a self-avoiding dual cycle $C \in E(\cC_{v_0})$, we set
\begin{equation*}
    r(\sigma, C) \defined
    \begin{cases}
        \card{C \cap \rU_\sigma} & \text{if $C \subseteq \rU_\sigma \cup \rM_\sigma$}, \\
        \infty & \text{otherwise,}
    \end{cases}
\end{equation*}
and for a fixed $0 < \lambda < 1$, to be chosen later, we define the \indef{danger} of $C$ as
\begin{equation*}
    w_\lambda(\sigma, C) \defined  \lambda^{r(\sigma, C)}.
\end{equation*}
The total danger associated with a configuration $\sigma$ is then defined to be
\begin{equation*}
    w_\lambda(\sigma, \cC_{v_0}) \defined \sum_{C \in E(\cC_{v_0})} w_\lambda(\sigma, C).
\end{equation*}
To proceed as before, we need an analogue of \Cref{prop:weight-saw} for dual cycles, that is, to determine conditions that guarantee that $w_\lambda(\sigma^{\rR}, \cC_{v_0})$ is controlled.
The trouble is that the total danger now is an infinite sum, as $\card{E(\cC_{v_0})} = \infty$.
With the right choice of parameters, the total danger turns into a convergent sum.
However, we need the total danger of the initial configuration to be low, rather than just finite, in order to use \Cref{thm:beck-infinite}.
As an appropriate tail of a convergent sum is arbitrarily small, the initial danger can be made low if Maker could ignore short dual cycles.
To turn this observation into a strategy for Maker, she split the board $\ZZ^2$ into two disjoint regions, one finite and one infinite.
From Maker's point of view, there are now two games being played in parallel, one for each region.
Whenever Breaker claims an edge in a given region, she responds by playing a move in that same region, following her corresponding strategy.
We provide Maker with a strategy in the finite region that guarantees that no short dual cycle of the board will be fully claimed by Breaker.
On the infinite region, Maker plays the potential-based strategy described above.

Denote by $L_{v_0}^N$ the subgraph of $\ZZ^2$ induced by the vertices $v_0 + \set{0, 1} \times \set {0, 1, \dotsc, N}$.
The \indef{protective game} on $L_{v_0}^N$ is played on the edges $E(L_{v_0}^N)$ and following the same dynamics of the Maker-Breaker percolation game.
Maker's goal is to guarantee that Breaker never claims a full cut in $L_{v_0}^N$.
In other words, the removal of the edges claimed by Breaker cannot disconnect $L_{v_0}^N$. 
As we will now see, Maker can accomplish this task quite easily, regardless of the value of $N$, as long as $b = 1$, $m \geq 2$, and all the edges in $L_{v_0}^N$ are unclaimed in the initial configuration.
Maker's strategy for the protective game is as follows.
In the first round of the game, Maker claims the edge $\set{v_0, v_0 + (0,1)}$.
After that, Maker partitions the remaining edges $E(L_{v_0}^N)$ into sets of size three as in \Cref{fig:ladder}.
Indeed, the parts are of the form
\begin{equation*}
    \set[\big]{\set{v,v+(1,0)}, \set{v+(1,0),v+(1,1)}, \set{v+(1,0),v+(1,1)} }
\end{equation*}
for $v \in \set{v_0, v_0 + (0,1), \dotsc v_0 + (0,N-1)}$.
Maker plays now a reactive strategy; whenever Breaker claims an unclaimed edge in $E(L_{v_0}^N)$, Maker claims the other two edges in the same part of the aforementioned partition.
To see that this is indeed a winning strategy for the protective game, notice that every cut in $L_{v_0}^N$ without the edge $\set{v_0, v_0 + (0,1)}$ has at least two edges in one of the triples above.

\begin{figure}[ht!]
\centering
\begin{tikzpicture}[scale=1.0]
\tikzstyle{s-helper}=[black, line width=0.1em];
\definecolor{s-col}{RGB}{135, 201, 180};
\tikzstyle{s-line}=[line width=0.4em, s-col, draw opacity=1];
\def\d{0.2}
\draw[s-line]
(0+\d,0)--(1,0)--(1,1)--(0+\d,1)
(1+\d,0)--(2,0)--(2,1)--(1+\d,1)
(2+\d,0)--(3,0)--(3,1)--(2+\d,1)
(3+\d,0)--(4,0)--(4,1)--(3+\d,1)
(4+\d,0)--(5,0)--(5,1)--(4+\d,1)
(5+\d,0)--(6,0)--(6,1)--(5+\d,1)
(6+\d,0)--(7,0)--(7,1)--(6+\d,1);
\draw[s-helper]
(0,0)--(7,0) (0,1)--(7,1)
(0,0)--(0,1) (1,0)--(1,1) (2,0)--(2,1)
(3,0)--(3,1) (4,0)--(4,1) (5,0)--(5,1)
(6,0)--(6,1) (7,0)--(7,1);
\foreach \x / \y in
{0/0,0/1,1/0,1/1,2/0,2/1,3/0,3/1,4/0,4/1,5/0,5/1,6/0,6/1,7/0,7/1}
\filldraw[xshift=\x cm, yshift=\y cm, fill=black] (0,0) circle (3pt);
\draw (-0.5,0) node {$v_0$};
\end{tikzpicture}
\caption{Disjoint triples of edges in $L_{v_0}^N$ used in Maker's strategy.}
\label{fig:ladder}
\end{figure}
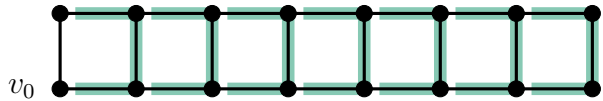

An alternative way to see that the reactive strategy is a winning one is by considering the dual of the edges claimed by Breaker.
Note that Maker guarantees that any dual path of edges claimed by Breaker that enters from the boundary of $L_{v_0}^N$ can only be continued `to the left'.
The fact that Maker claimed the leftmost edge $\set{v_0, v_0 + (0,1)}$ impedes any dual path of Breaker from leaving $L_{v_0}^N$, so Breaker cannot claim a minimal cut.
See \Cref{fig:ladder-claimed} for an example.

\begin{figure}[ht!]
\centering
\begin{tikzpicture}[scale=1.0]
\tikzstyle{s-maker}=[blue, ultra thick];
\tikzstyle{s-breaker}=[red, thick];
\tikzstyle{s-breaker-dual}=[red!20, ultra thick];
\draw[s-breaker-dual]
(0.5,0.5) -- (1.5,0.5) -- (1.5,-0.5)
(2.5,0.5) -- (4.5,0.5) -- (4.5,1.5)
(5.5,0.5) -- (7.5,0.5)
;

\draw[s-maker]
(1,0) -- (0,0) -- (0,1) -- (2,1) -- (2,0) -- (7,0)
(2,1) -- (4,1)
(5,0) -- (5,1) -- (7,1)
;

\draw[s-breaker]
(2,0) -- (1,0) -- (1,1)
(3,0) -- (3,1)
(4,0) -- (4,1) -- (5,1)
(6,0) -- (6,1)
(7,0) -- (7,1)
;

\foreach \x / \y in
{0.5/0.5,1.5/0.5,2.5/0.5,3.5/0.5,4.5/0.5,5.5/0.5,6.5/0.5,7.5/0.5,1.5/-0.5,4.5/1.5}
\filldraw[xshift=\x cm, yshift=\y cm, fill=red, red!60, thick] (-0.1,-0.1) -- (-0.1,0.1) -- (0.1,0.1) -- (0.1,-0.1) -- cycle;

\foreach \x / \y in
{0/0,0/1,1/0,1/1,2/0,2/1,3/0,3/1,4/0,4/1,5/0,5/1,6/0,6/1,7/0,7/1}
\filldraw[xshift=\x cm, yshift=\y cm, fill=black] (0,0) circle (3pt);
\draw (-0.5,0) node {$v_0$};
\end{tikzpicture}
\caption{Example of fully claimed $L_{v_0}^N$ in the protective game. Dual paths claimed by Breaker shown in red.}
\label{fig:ladder-claimed}
\end{figure}
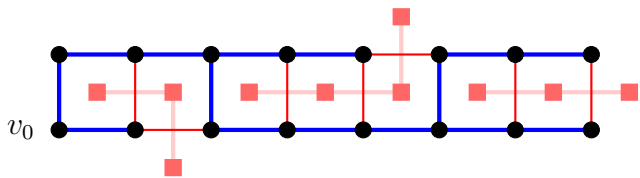

In view of this reactive strategy, we may modify the \Cref{obs:maker-hypergraph} with the new information that no dual cycle of length $\leq N$ is ever claimed by Breaker, given that Maker plays the protective game on $L_{v_0}^N$.
We want to equip Maker with the potential-based strategy on the board $\ZZ^2$ restricted to the complement of $E(L_{v_0}^N)$.
Indeed, define the hypergraph $\cC_{v_0}^N$ to be the subhypergraph of $\cC_{v_0}$ induced by $E(\ZZ^2) \setminus E(L_{v_0}^N)$.
In other words, $\cC_{v_0}^N$ is obtained from $\cC_{v_0}$ by erasing $E(L_{v_0}^N)$, together with all dual cycles that use at least one of those edges.
Therefore, the hyperedges of $\cC_{v_0}^N$ are all dual cycles in $\ZZ^2$ that fully encircle $L_{v_0}^N$, and therefore, have length at least $N$.
We update \Cref{obs:maker-hypergraph} in the following way.

\begin{observation}
\label{obs:maker-hypergraph-refined}
Let $m \geq 2$ and $b = 1$.
Suppose that $\sigma$ is a game configuration on $\ZZ^2$ with the property that there is $v_0 \in \ZZ^2$ and $N \geq 0$ such that:
\begin{enumerate}
    \item $E(L_{v_0}^N)$ has no edges claimed by Breaker in $\sigma$;
    \item Breaker has a strategy for the $(b,m)$ Maker-Breaker game on $\cC_{v_0}^N$ with initial configuration $\sigma$, restricted to $\cC_{v_0}^N$, in which Maker never fully claims a hyperedge.
\end{enumerate}
Then Maker has a winning strategy for the $(m,b)$ Maker-Breaker percolation game on $\ZZ^2$ with initial configuration $\sigma^{\rR}$.
\end{observation}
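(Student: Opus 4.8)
The plan is to have Maker play two games in parallel by partitioning the board as $E(\ZZ^2) = E(L_{v_0}^N) \sqcup \big(E(\ZZ^2)\setminus E(L_{v_0}^N)\big)$. On the finite part $E(L_{v_0}^N)$ she runs the reactive protective strategy described just above the statement: on her first turn she claims $\set{v_0, v_0+(0,1)}$, she partitions the rest of $E(L_{v_0}^N)$ into the triples of \Cref{fig:ladder}, and whenever Breaker claims an edge inside $L_{v_0}^N$ she answers with the other two edges of its triple; condition (i) is precisely what lets her carry this out from the outset. On the complementary part she plays the winning ``Breaker'' strategy for the $(b,m)$ game on $\cC_{v_0}^N$ furnished by hypothesis (ii), exported into the percolation game by the same correspondence as in \Cref{obs:maker-hypergraph} (Maker $\leftrightarrow$ Breaker of the auxiliary game, an edge claimed here $\leftrightarrow$ that edge claimed there). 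Since $b=1$, in each turn Breaker claims exactly one edge, which lies in exactly one of the two regions, and Maker answers in that same region with the corresponding sub-strategy — the protective answer costs $2 \le m$ edges, the $\cC_{v_0}^N$-answer costs $m$ — spending any surplus moves harmlessly. The one irregularity, that Maker moves first and with $m$ edges before seeing any move of Breaker, is absorbed exactly as in \Cref{obs:maker-hypergraph}: she spends turn one on $\set{v_0,v_0+(0,1)}$ plus harmless moves, a free opening move never hurting the player running the $\cC_{v_0}^N$-Breaker strategy.

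To see that this is a winning strategy, I would suppose for contradiction that at some stage the edges claimed by Breaker contain a \emph{minimal} set $C$ whose removal makes the component of $v_0$ finite; let $K \ni v_0$ be that finite component of $\ZZ^2 - C$, so by minimality each edge of $C$ has one endpoint in $K$ and one outside it. If $C \cap E(L_{v_0}^N) = \emptyset$, then $L_{v_0}^N$ is a connected subgraph of $\ZZ^2 - C$ that meets $K$, hence $L_{v_0}^N \subseteq K$; so $C$ encircles all of $L_{v_0}^N$ and is therefore a hyperedge of $\cC_{v_0}^N$ lying in $E(\ZZ^2)\setminus E(L_{v_0}^N)$ and fully claimed by Breaker. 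But on that region Maker plays the strategy of hypothesis (ii), under which the opposing player of the auxiliary game — the one claiming $b$ edges and aiming to complete hyperedges, i.e.\ precisely Breaker of the percolation game here — never fully claims a hyperedge; contradiction. Otherwise $C$ contains an edge $e \in E(L_{v_0}^N)$, and deleting $e$ from $C$ exposes a $v_0$-to-infinity path through $e$, so the two endpoints of $e$ lie in $L_{v_0}^N$ but in distinct components of $\ZZ^2-C$. Consequently $L_{v_0}^N - \big(C \cap E(L_{v_0}^N)\big)$, being a subgraph of $\ZZ^2 - C$, is disconnected, i.e.\ the edges of $C$ inside $L_{v_0}^N$ — all claimed by Breaker — form a cut of $L_{v_0}^N$. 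This contradicts the protective strategy, since every cut of $L_{v_0}^N$ either contains $\set{v_0,v_0+(0,1)}$, owned by Maker, or contains two edges of a single triple, which Maker's reactive answers forbid Breaker from ever holding together. Thus Breaker can never complete a separating set, and Maker wins the percolation game on $\ZZ^2$ started from $\sigma^{\rR}$.

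I expect the only real content to be the two structural facts invoked in the case analysis, which I would record as short lemmas: first, that a dual cycle around $v_0$ disjoint from $E(L_{v_0}^N)$ must encircle the whole connected, $v_0$-rooted graph $L_{v_0}^N$ and hence be a hyperedge of $\cC_{v_0}^N$ — this is the planarity remark already made when $\cC_{v_0}^N$ was introduced — and second, that a minimal separating set of $v_0$ meeting $E(L_{v_0}^N)$ restricts to a cut of the ladder $L_{v_0}^N$. Both are purely topological and brief. The remaining bookkeeping — interleaving the two reactive sub-strategies under the asymmetry $m \ge 2 > 1 = b$ and treating Maker's first move — is routine and mirrors the proof of \Cref{obs:maker-hypergraph}, so I do not anticipate a genuine obstacle there.
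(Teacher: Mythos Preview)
Your proposal is correct and follows essentially the same approach as the paper's own proof: split the board into $E(L_{v_0}^N)$ and its complement, run the protective triple-pairing on the ladder and the auxiliary $(b,m)$-Breaker strategy on the rest, and argue by the same two-case contradiction on a putative separating set claimed by Breaker. Your write-up is in fact more explicit than the paper's on the two topological points (why a minimal separating set disjoint from $E(L_{v_0}^N)$ must encircle the whole ladder, and why one that meets $E(L_{v_0}^N)$ restricts to a cut there), which the paper dispatches in a single clause each.
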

\begin{proof}
Maker chooses $v_0$ to protect.
As having edges already claimed by Maker can only help her, we may assume that all edges in $E(L_{v_0}^N)$ are unclaimed.
Maker follows a reactive strategy that plays two games at once: she plays as Maker in the protective game on $L_{v_0}^N$ and she plays as Breaker in the $(b,m)$ Maker-Breaker game on $\cC_{v_0}^N$ .
On her first turn, Maker follows her strategy in the protective game on $L_{v_0}^N$ and claims the edge $\set{v_0, v_0 + (0,1)}$.

After her first turn, she will now respond by playing either in $E(L_{v_0}^N)$ or in $E(\ZZ^2) \setminus E(L_{v_0}^N)$ depending on where Breaker makes a move.
As $b = 1$, Breaker can only make a move in one of these two games in each turn.
If Breaker claims an edge in $E(L_{v_0}^N)$, then Maker responds according to her winning strategy for the protective game on $L_{v_0}^N$.
If Breaker claims an edge in $E(\ZZ^2) \setminus E(L_{v_0}^N)$, then Maker plays as Breaker would in the $(b,m)$ Maker-Breaker game on $\cC_{v_0}^N$.

We claim that Maker wins the $(m,b)$ Maker-Breaker percolation game on $\ZZ^2$ by following this composite strategy.
Assume, for contradiction, that Breaker wins by fully claiming the dual cycle $C$ surrounding $v_0$.
If no edge in $C$ is contained in $E(L_{v_0}^N)$ then $C$ is a hyperedge in $\cC_{v_0}^N$, contradicting the fact that Breaker (playing as Maker) never claims a full hyperedge in the game on $\cC_{v_0}^N$.
If $C$ contains an edges from $E(L_{v_0}^N)$ then, since it is a cycle, it intersects $L_{v_0}^N$ in a cut, contradicting the fact that Maker wins that protective game on $L_{v_0}^N$.
\end{proof}

Now that we have a promising setup, we analyse when a initial configuration $\sigma$ coming from $(\ZZ^2)_{\alpha, \beta}$ will allow the existence of appropriate $v_0$ and $N$ such that \Cref{obs:maker-hypergraph-refined} can be applied.
While the condition that $L_{v_0}^N$ is unclaimed in $\sigma$ is a minor requirement, to apply \Cref{thm:beck-infinite}, we must guarantee that $w_\lambda(\sigma^{\rR}, \cC_{v_0}^N)$ is quite small.

\begin{proposition}
\label{prop:weight-dual-cycle}
Let $\kappa$ be the connective constant of $\ZZ^2$, let $\alpha, \beta \geq 0$, $\alpha + \beta \leq 1$ and let $0 < \lambda < 1$ be such that $\p[\big]{\lambda(1 - \alpha - \beta) + \beta} \kappa < 1$.
Then almost surely, a random game configuration $\sigma$ distributed as $(\ZZ^2)_{\alpha,\beta}$ has the property that for every $\delta > 0$ there exists $v_0 \in \ZZ^2$ such that:
\begin{enumerate}
    \item $ w_\lambda(\sigma^{\rR}, \cC_{v_0}^N) < \delta$;
    \item $E(L_{v_0}^N)$ is fully unclaimed in $\sigma$.
\end{enumerate}
\end{proposition}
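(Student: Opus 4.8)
The plan is to combine a first-moment estimate for the danger with a search over infinitely many disjoint candidate locations, and then to use ergodicity to upgrade a positive-probability event to a probability-one one. The structural point that makes this work is that every hyperedge of $\cC_{v_0}^N$ is contained in $E(\ZZ^2)\setminus E(L_{v_0}^N)$, so that conditions (i) and (ii) are governed by disjoint sets of edges.

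First I would record the first-moment bound. Reversal does not change which edges are unclaimed, so $\sigma^{\rR}$ is distributed as $(\ZZ^2)_{\beta,\alpha}$, and for a dual cycle $C$ one has $w_\lambda(\sigma^{\rR},C)=\ind_{\set{C\cap \rB_{\sigma^{\rR}}=\emptyset}}\lambda^{\card{C\cap \rU_{\sigma^{\rR}}}}$, whence $\expec[\big]{w_\lambda(\sigma^{\rR},C)}=\mu^{\card C}$ with $\mu\defined\lambda(1-\alpha-\beta)+\beta<1/\kappa$. The plane dual of $\ZZ^2$ is isomorphic to $\ZZ^2$, so a standard count of self-avoiding polygons around a point (a length-$\ell$ polygon crosses a fixed ray from $v_0$ within distance $\ell$, and is otherwise a self-avoiding walk of length $\ell$) gives, for any $\eps>0$, at most $\ell(\kappa+\eps)^\ell$ hyperedges of $\cC_{v_0}$ of length $\ell$ once $\ell$ is large. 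Fixing $\eps$ with $(\kappa+\eps)\mu<1$ and using that every hyperedge of $\cC_{v_0}^N$ has length at least $N$ (as noted just before \Cref{obs:maker-hypergraph-refined}), I obtain
\[
\expec[\big]{w_\lambda(\sigma^{\rR},\cC_{v_0}^N)}\;\le\;\sum_{\ell\ge N}\ell\bigl((\kappa+\eps)\mu\bigr)^\ell\;\defines\;\tau(N),
\]
where $\tau(N)\to 0$ as $N\to\infty$, a bound uniform in $v_0$ by translation invariance.

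Next, I would fix $\delta>0$ (and assume $\alpha+\beta<1$, the other case having no unclaimed edges). Choose $N$ with $\tau(N)<\delta/4$ and set $p_0\defined(1-\alpha-\beta)^{\card{E(L_{v_0}^N)}}>0$. Take $K\defined\floor{1/p_0}$ vertices $v^{(1)},\dots,v^{(K)}$ spaced along a horizontal line so that the ladders $L_{v^{(k)}}^N$ are pairwise edge-disjoint (e.g.\ $v^{(k)}\defined(2k,0)$); let $U_k$ be the event that $E(L_{v^{(k)}}^N)$ is unclaimed in $\sigma$ and $D_k$ the event that $w_\lambda(\sigma^{\rR},\cC_{v^{(k)}}^N)<\delta$. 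Then $U_1,\dots,U_K$ are independent with $\Prob(U_k)=p_0$. Since every hyperedge of $\cC_{v^{(k)}}^N$ lies in $E(\ZZ^2)\setminus E(L_{v^{(k)}}^N)$, the event $D_k$ depends only on edges outside $L_{v^{(k)}}^N$, so $U_k$ and $D_k$ are independent, and Markov's inequality gives $\Prob(D_k)\ge 1-\tau(N)/\delta\ge\tfrac34$. Hence $\Prob(U_k\cap D_k)\ge\tfrac34 p_0$, while $\Prob\bigl((U_j\cap D_j)\cap(U_k\cap D_k)\bigr)\le\Prob(U_j\cap U_k)=p_0^2$ for $j\neq k$. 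Bonferroni's inequality then yields $\Prob\bigl(\bigcup_{k=1}^K(U_k\cap D_k)\bigr)\ge\tfrac34 Kp_0-\binom{K}{2}p_0^2$; since $\binom{K}{2}p_0^2\le\tfrac12(Kp_0)^2$ and $\tfrac12<Kp_0\le1$, this is at least $Kp_0\bigl(\tfrac34-\tfrac12 Kp_0\bigr)>0$.

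Finally, the event $E_\delta$ that there exist $v_0\in\ZZ^2$ and $N\ge0$ satisfying (i)--(ii) is translation invariant, because $w_\lambda$, the hypergraphs $\cC_{v_0}^N$, the ladders $L_{v_0}^N$ and reversal are all equivariant under lattice translations; and $\Pbond{\ZZ^2,\alpha,\beta}$ is ergodic under this action, so $\Prob(E_\delta)\in\set{0,1}$. As $E_\delta\supseteq\bigcup_k(U_k\cap D_k)$, the previous paragraph forces $\Prob(E_\delta)=1$, and intersecting over $\delta=1/n$, $n\in\NN$, and using monotonicity in $\delta$ finishes the proof. The step I expect to be the real obstacle is precisely this decoupling: one cannot fix $v_0$ in advance, since the probability that a ladder tall enough to make the danger small is unclaimed decays like $(1-\alpha-\beta)^{\Theta(N)}$, which $(\kappa+\eps)\mu$ need not beat; searching over many disjoint locations and exploiting $U_k\perp D_k$ — which holds exactly because $L_{v_0}^N$ was excised when forming $\cC_{v_0}^N$ — is what resolves it. (Ergodicity could instead be replaced by truncating the danger to dual cycles of bounded length, making the main part a local event, and running Borel--Cantelli over disjoint blocks, but that route is messier.)
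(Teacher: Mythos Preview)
Your proof is correct and follows the same route as the paper: a first-moment bound on the danger via the self-avoiding-walk count, independence of the ladder event and the danger event (since $\cC_{v_0}^N$ was defined by excising $E(L_{v_0}^N)$), and then a zero--one law for the translation-invariant union. The Bonferroni step over $K$ sites is superfluous, though: the paper simply observes $\Prob(U_1\cap D_1)\ge\tfrac12(1-\beta)^{3N+1}>0$ for a \emph{single} vertex and invokes the zero--one law directly, so your concern that ``one cannot fix $v_0$ in advance'' is misplaced --- only positivity is needed before the zero--one law takes over, not a quantitative lower bound that competes with the decay of $p_0$.
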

\begin{proof}
Let $\sigma$ be a random game configuration distributed as $(\ZZ^2)_{\alpha,\beta}$ and let $v \in \ZZ^2$ be any vertex.
Every self-avoiding dual cycle $C \in \cC_v^N$ with $\card{C} = n$ must contain at least one of the edges in $\set{ (x, x+1) \st 0 \leq x \leq n }$.
Therefore, we can bound the number of $C \in \cC$ with $\card{C} = n$ by $n \card{E(\cP_{\ZZ^2, v_0}^n)}$.
As in the proof of \Cref{prop:weight-saw}, there exists $n_0(\eps)$ such that there are at most $(\kappa + \eps)^n$ such dual cycles if $n \geq n_0(\eps)$.

Note that $w_\lambda(\sigma^{\rR}, C) = \ind_{\set{C \cap \rM_\sigma = \emptyset}} \lambda^{\card{C \cap \rU_\sigma}}$.
Given $C \in \cC$ with $\card{C} = n$, we have
\begin{align*}
    \expec[\big]{w_\lambda(\sigma^{\rR}, C)}
    &= \sum_{0 \leq k \leq n} \lambda^k \pbond[\big]{\ZZ^2,\alpha,\beta}{\card{C \cap \rU_\sigma} = k, \card{C \cap \rM_\sigma} = 0} \\
    &= \sum_{0 \leq k \leq n} \lambda^k \binom{n}{k}(1 - \alpha - \beta)^k \beta^{n-k}
    = \p[\big]{\lambda(1 - \alpha - \beta) + \beta}^n.
\end{align*}
Thus, for $N \geq n_0(\eps)$, we have
\begin{align*}
    \expec[\big]{w_\lambda(\sigma^{\rR}, \cC_v^N)}
    &= \sum_{n \geq N} \sum_{\substack{C \in E(\cC_v^N) \\ \card{C} = n }} \expec[\big]{w_\lambda(\sigma, P)} \\
    &\leq \sum_{n \geq N} \p[\Big]{(\kappa + \eps)\p[\big]{\lambda (1 - \alpha - \beta) + \beta}}^n.
\end{align*}
We can choose $\eps$ small enough such that $(\kappa + \eps)\p[\big]{\lambda (1 - \alpha - \beta) + \beta} < 1$ so that the series above converges, and $N$ large enough such that the tail of this series is arbitrarily small.
By Markov's inequality, if $N$ is large enough, we have
\begin{equation*}
    \pbond{\ZZ^2,\alpha,\beta}{w_\lambda(\sigma^\rR, \cC_v^N) \geq \delta}
    \leq \delta^{-1}\expec[\big]{w_\lambda(\sigma^{\rR}, \cC_v^N)} \leq 1/2.
\end{equation*}

Note that the events `$w_\lambda(\sigma^\rR, \cC_v^N) < \delta$' and `$\card{\rB_\sigma \cap E(L_v^N)} = 0$' are independent, as they are fully determined by disjoint sets of edges.
Let $\cE_{v,N}$ be the union of these two events.
For $N$ sufficiently large, we have
\begin{align*}
    \pbond[\big]{\ZZ^2,\alpha,\beta}{\cE_{v,N}}
    &= \pbond[\big]{\ZZ^2,\alpha,\beta}{w_\lambda(\sigma^\rR, \cC_v^N) \leq \delta} \pbond[\big]{\ZZ^2,\alpha,\beta}{\card{\rB_\sigma \cap E(L_v^N)} = 0} \\
    &\geq \frac{1}{2} (1 - \beta)^{3N+1} > 0.
\end{align*}
Denote by $\cE_{N} = \bigcup_{v \in \ZZ^2} \cE_{v,N}$.
Since $\cE_{N}$ is a tail event, the Kolmogorov’s zero--one law gives us that $\pbond{\ZZ^2,\alpha,\beta}{\cE_N} \in \set{0,1}$.
As $\pbond{\ZZ^2,\alpha,\beta}{\cE_N} \geq \pbond{\ZZ^2,\alpha,\beta}{\cE_{(0,0),N}} > 0$ for $N \geq N_0$, we have $\pbond{\ZZ^2,\alpha,\beta}{\cE_N} = 1$ for $N \geq N_0$, completing the proof.
\end{proof}

Assuming \Cref{thm:beck-infinite}, we obtain \Cref{thm:potential-maker-symmetric} from \Cref{prop:weight-dual-cycle}.

\begin{proof}[Proof of \Cref{thm:potential-maker-symmetric}]
Setting $\lambda = (m+1)^{-1}$ in the condition $\p[\big]{\lambda(1 - \alpha - \beta) + \beta} \kappa < 1$, we obtain \eqref{eq:mb-maker-ab}.
Let $\sigma_0$ be a random game configuration distributed as $(\Lambda)_{\alpha, \beta}$.
By \Cref{prop:weight-saw} we know that almost surely there exist $N \geq 0$ and $v_0 \in \ZZ^2$ such that $w_\lambda(\sigma, \cC_{v_0}^N) < 1/(m+1)$ and $L_{v_0}^N$ has no edge claimed by Breaker.
Under this condition, by \Cref{thm:beck-infinite}, Breaker has a strategy for the $(b,m)$ Maker-Breaker game on $\cC_{v_0}^N$ with initial configuration $\sigma^{\rR}$ in which Maker never fully claims a hyperedge.
By \Cref{obs:maker-hypergraph-refined}, we have that Breaker almost surely has a winning strategy for the $(m, b)$ Maker-Breaker percolation game on $(\Lambda)_{\alpha, \beta}$, as claimed.
\end{proof}


\section{Analysis of the potential-based strategies}
\label{sec:analysis}

The goal of this section is to provide an analysis of the potential-based strategy in a Maker-Breaker game, leading to a proof of \Cref{thm:beck-infinite}, which is equivalent to \Cref{thm:beck-infinite-alternative} below.
This completes the proof of \Cref{thm:potential-breaker-symmetric} and \Cref{thm:potential-maker-symmetric}, together with their many consequences.
We also extend the analysis to boosted games in \Cref{cor:beck-infinite-boosted}.

Fix a possibly infinite hypergraph $\cH$.
In this section, we describe and analyse a so-called \indef{potential-based strategy} for Breaker in the $(m,b)$ Maker-Breaker game on $\cH$.
Recall that a \indef{game configuration} on $\cH$ is a map $\sigma \from V(\cH) \to \set{\rU, \rM, \rB}$ and that we write $\rM_\sigma \defined \sigma^{-1}(\rM)$, $\rB_\sigma \defined \sigma^{-1}(\rB)$ and $\rU_\sigma \defined \sigma^{-1}(\rU)$.
Given a game configuration $\sigma$ and an unclaimed vertex $v \in \rU_\sigma$, denote by $\sigma_{\rM \gets v}$ and $\sigma_{\rB \gets v}$ the game configurations satisfying
\begin{align*}
    \sigma_{\rM \gets v}(u) \defined \begin{cases}
        \rM & \text{if $u = v$}, \\
        \sigma(u) & \text{otherwise},
    \end{cases}
    & &
    \sigma_{\rB \gets v}(u) \defined \begin{cases}
        \rB & \text{if $u = v$}, \\
        \sigma(u) & \text{otherwise}.
    \end{cases}
\end{align*}

To win the $(m,b)$ Maker-Breaker percolation game on $\cH$, Breaker has to claim one vertex in every hyperedge of $\cH$, recall \Cref{sec:hyper}.
We provide Breaker with a way of evaluating how dangerous an hyperedge $S \in E(\cH)$ is given a game configuration $\sigma$.
Breaker will then play greedily, always choosing vertices that minimise the estimated danger.
To be more precise, define
\begin{equation*}
    r(\sigma, S) \defined
    \begin{cases}
        \card{S \cap \rU_\sigma} & \text{if $S \subseteq \rU_\sigma \cup \rM_\sigma$}, \\
        \infty & \text{otherwise,}
    \end{cases}
\end{equation*}
and for a fixed $0 < \lambda < 1$ to be chosen later, define the \indef{danger} of an hyperedge $S$ as
\begin{equation*}
    w_\lambda(\sigma, S) \defined  \lambda^{r(\sigma, S)}.
\end{equation*}
Indeed, the energy $w_\lambda(\sigma, S)$ is a reasonable estimate for the danger of an hyperedge $S$ from Breaker's point of view.
As $w_\lambda(\sigma, S) = 0$ whenever Breaker already claimed a vertex in $S$, he can cease to worry about $S$.
On the other hand, if the vertices of $S$ are either unclaimed or claimed by Maker, $w_\lambda(\sigma, S)$ is positive.
As the number of vertices left unclaimed in $S$ decreases, the danger associated with such an hyperedge grows exponentially.
If $w_\lambda(\sigma, S) = 1$, Breaker failed his strategy and Maker fully claimed the hyperedge $S$.

The total danger associated with a configuration $\sigma$ is defined as
\begin{equation*}
    w_\lambda(\sigma) \defined \sum_{S \in E(\cH)} w_\lambda(\sigma, S).
\end{equation*}
The danger $w_\lambda(\sigma)$ can be interpreted as an \indef{energy} that Breaker aims to minimise.

With the potential-based strategy, Breaker always play a vertex that minimises $w_\lambda$.
His goal is to guarantee that Maker will never fully claim a winning set $S \in E(\cH)$.
In \Cref{thm:beck-infinite-alternative}, we show that by playing in this way, $w_\lambda(\sigma_t) \leq w_\lambda(\sigma_0)$ for all $t$.
Therefore, if $w_\lambda(\sigma_0) < 1$, than the same holds for all $\sigma_t$.
In particular, Breaker attains his aforementioned goal.

To analyse the energetic strategy, we need to keep track of some key quantities.
Define the \emph{$\rB$-potential} of a vertex $v \in \rU_\sigma$ to be
\begin{align*}
    \Delta_{\rB \gets v}(\sigma) \defined w_\lambda(\sigma) - w_\lambda(\sigma_{\rB \gets v}),
\end{align*}
and its \emph{$\rM$-potential} to be
\begin{align*}
    \Delta_{\rM \gets v}(\sigma) \defined w_\lambda(\sigma_{\rM \gets v}) - w_\lambda(\sigma).
\end{align*}
In other words, if Breaker claims $v$, the danger is reduced by $\Delta_{\rB \gets v}(\sigma)$, whereas Maker increases the danger by $\Delta_{\rM \gets v}(\sigma)$ by claiming $v$.

Before the proof of the main result of the section, \Cref{thm:beck-infinite-alternative}, we collect some observations.
For future reference, notice that $r(\sigma, S)$ changes in the following way when the players claim an unclaimed vertex $v$:
\begin{equation}
\label{eq:risk-breaker}
    r(\sigma_{\rB \gets v}, S) \defined
    \begin{cases}
        r(\sigma, S) & \text{if $v \notin S$}, \\
        \infty & \text{if $v \in S$},
    \end{cases}
\end{equation}
and
\begin{equation}
\label{eq:risk-maker}
    r(\sigma_{\rM \gets v}, S) \defined
    \begin{cases}
        r(\sigma, S) & \text{if $v \notin S$}, \\
        r(\sigma, S) - 1 & \text{if $v \in S$}.
    \end{cases}
\end{equation}
We collect some further observations.

\begin{observation}
\label{obs:delta-breaker}
For any unclaimed vertex $v \in \rU_\sigma$, we have
\begin{equation*}
    \Delta_{\rB \gets v}(\sigma) = \sum_{v \in S \in E(\cH)} \lambda^{r(\sigma, S)}.
\end{equation*}
\end{observation}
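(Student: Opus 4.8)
The plan is to expand the definition of $\Delta_{\rB \gets v}(\sigma)$ as a single sum over all hyperedges of $\cH$ and then identify which terms survive. Writing out the definitions of $\Delta_{\rB \gets v}$ and of the total danger $w_\lambda$, I would start from
\begin{equation*}
    \Delta_{\rB \gets v}(\sigma) = w_\lambda(\sigma) - w_\lambda(\sigma_{\rB \gets v})
    = \sum_{S \in E(\cH)} \p[\big]{ \lambda^{r(\sigma, S)} - \lambda^{r(\sigma_{\rB \gets v}, S)} }.
\end{equation*}
Since all summands are nonnegative (by \eqref{eq:risk-breaker}, claiming a vertex for Breaker can only increase $r$, hence only decrease $\lambda^{r}$ as $0 < \lambda < 1$), this rearrangement of the difference into a termwise sum is valid regardless of whether $w_\lambda(\sigma)$ is finite, with the convention that a sum of nonnegative terms always has a well-defined value in $[0,\infty]$.

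The next step is to evaluate each term using \eqref{eq:risk-breaker}. If $v \notin S$, then $r(\sigma_{\rB \gets v}, S) = r(\sigma, S)$, so the term vanishes. If $v \in S$, then $r(\sigma_{\rB \gets v}, S) = \infty$, and since $0 < \lambda < 1$ we have $\lambda^{\infty} = 0$, so the term equals $\lambda^{r(\sigma, S)}$. Summing over the surviving hyperedges yields
\begin{equation*}
    \Delta_{\rB \gets v}(\sigma) = \sum_{v \in S \in E(\cH)} \lambda^{r(\sigma, S)},
\end{equation*}
as claimed. I would also remark in passing that this shows the displayed sum is always a sub-sum of $w_\lambda(\sigma)$, so it is finite whenever $w_\lambda(\sigma)$ is; this is the regime in which the observation will be applied.

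There is essentially no substantive obstacle here: the only point requiring a line of care is the interchange of the difference $w_\lambda(\sigma) - w_\lambda(\sigma_{\rB \gets v})$ with the termwise difference when the danger could a priori be infinite, which is handled by the nonnegativity of the terms noted above. Everything else is a direct substitution of \eqref{eq:risk-breaker} and the convention $\lambda^{\infty} = 0$.
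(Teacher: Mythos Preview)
Your proof is correct and follows essentially the same approach as the paper: expand $\Delta_{\rB \gets v}(\sigma)$ as a termwise difference over all hyperedges and use \eqref{eq:risk-breaker} to see that only the hyperedges containing $v$ contribute, each with value $\lambda^{r(\sigma,S)}$. Your additional remark about nonnegativity justifying the termwise manipulation in the possibly-infinite case is a welcome bit of extra care that the paper leaves implicit.
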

\begin{proof}
From \eqref{eq:risk-breaker}, we get
\begin{align*}
    \Delta_{\rB \gets v}(\sigma)
    &= w_\lambda(\sigma) - w_\lambda(\sigma_{\rB \gets v}) \\
    &= \sum_{S \in E(\cH)} \p[\Big]{ w_\lambda(\sigma, S) - w_\lambda(\sigma_{\rB \gets v}, S)}
    = \sum_{v \in S \in E(\cH)} \lambda^{r(\sigma, S)},
\end{align*}
as claimed.
\end{proof}

\begin{observation}
\label{obs:delta-maker}
For any unclaimed vertex $v \in \rU_\sigma$, we have
\begin{align*}
    \Delta_{\rM \gets v}(\sigma) = (\lambda^{-1} - 1)\Delta_{\rB \gets v}(\sigma).
\end{align*}
\end{observation}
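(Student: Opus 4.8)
The plan is to expand the definition of $\Delta_{\rM \gets v}(\sigma)$ termwise over the hyperedges of $\cH$ and then recognise the result via the formula for $\Delta_{\rB \gets v}(\sigma)$ from \Cref{obs:delta-breaker}. Writing
\[
  \Delta_{\rM \gets v}(\sigma) = w_\lambda(\sigma_{\rM \gets v}) - w_\lambda(\sigma) = \sum_{S \in E(\cH)} \p[\big]{w_\lambda(\sigma_{\rM \gets v}, S) - w_\lambda(\sigma, S)},
\]
I would first observe that every hyperedge $S$ with $v \notin S$ contributes nothing: by \eqref{eq:risk-maker} we have $r(\sigma_{\rM \gets v}, S) = r(\sigma, S)$ in that case, so the two dangers agree and cancel.

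For a hyperedge $S$ with $v \in S$ there are two cases. If $S$ already contains a vertex of Breaker, then $r(\sigma, S) = r(\sigma_{\rM \gets v}, S) = \infty$, and since $0 < \lambda < 1$ both dangers are $0$; this is consistent with the claimed right-hand side, as such an $S$ also contributes $\lambda^{\infty} = 0$ to $\Delta_{\rB \gets v}(\sigma)$. Otherwise $S \subseteq \rU_\sigma \cup \rM_\sigma$, and since $v \in S \cap \rU_\sigma$ we have $r(\sigma, S) = \card{S \cap \rU_\sigma} \geq 1$, so that $r(\sigma, S) - 1 \geq 0$ is a legitimate exponent; by \eqref{eq:risk-maker}, $r(\sigma_{\rM \gets v}, S) = r(\sigma, S) - 1$, whence $w_\lambda(\sigma_{\rM \gets v}, S) = \lambda^{r(\sigma,S) - 1} = \lambda^{-1} w_\lambda(\sigma, S)$. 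Thus each such $S$ contributes $(\lambda^{-1} - 1)\lambda^{r(\sigma, S)}$ to the sum.

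Summing over all $S$ containing $v$, I would conclude
\[
  \Delta_{\rM \gets v}(\sigma) = (\lambda^{-1} - 1)\sum_{v \in S \in E(\cH)} \lambda^{r(\sigma, S)} = (\lambda^{-1} - 1)\Delta_{\rB \gets v}(\sigma),
\]
where the last equality is precisely \Cref{obs:delta-breaker}. There is no substantial obstacle here: the only points requiring a little care are the bookkeeping around $r(\sigma, S) = \infty$ and the remark that $r(\sigma, S) \geq 1$ for the hyperedges that matter, after which the identity falls out of a one-line factorisation. To be fully rigorous about rearranging a possibly infinite sum, one notes that all summands are non-negative, so the manipulation is valid whether or not the total danger $w_\lambda(\sigma)$ is finite.
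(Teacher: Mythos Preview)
Your proof is correct and follows essentially the same route as the paper: expand $\Delta_{\rM \gets v}(\sigma)$ as a sum over hyperedges, use \eqref{eq:risk-maker} to reduce to the terms with $v \in S$, factor out $(\lambda^{-1}-1)$, and identify the remaining sum via \Cref{obs:delta-breaker}. The paper's version is terser and does not spell out the $r(\sigma,S)=\infty$ case or the non-negativity remark, but the argument is the same.
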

\begin{proof}
From \eqref{eq:risk-maker} and \Cref{obs:delta-breaker}, we get
\begin{align*}
    \Delta_{\rM \gets v}(\sigma)
    &= w_\lambda(\sigma_{\rM \gets v}) - w_\lambda(\sigma) \\
    &= \sum_{S \in E(\cH)} \p[\Big]{ w_\lambda(\sigma_{\rM \gets v}, S) - w_\lambda(\sigma, S)} \\
    &= \sum_{v \in S \in E(\cH)} \p[\Big]{ \lambda^{r(\sigma, S) - 1} - \lambda^{r(\sigma, S)}}
    = (\lambda^{-1} - 1) \sum_{v \in S \in E(\cH)} \lambda^{r(\sigma, S)}.
\end{align*}
Therefore $\Delta_{\rM \gets v}(\sigma) = (\lambda^{-1} - 1) \Delta_{\rB \gets v}(\sigma)$ as claimed.
\end{proof}

\begin{observation}
\label{obs:monotone}
For any pair of unclaimed vertices $v_1, v_2 \in \rU_\sigma$, we have
\begin{equation*}
    \Delta_{\rB \gets v_2}(\sigma_{\rB \gets v_1}) \leq \Delta_{\rB \gets v_2}(\sigma).
\end{equation*}
\end{observation}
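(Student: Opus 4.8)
\textbf{Proof proposal for \Cref{obs:monotone}.}
The plan is to reduce everything to the term-by-term formula for the $\rB$-potential supplied by \Cref{obs:delta-breaker}, and then compare the two sums edge by edge using the update rule \eqref{eq:risk-breaker}. Concretely, \Cref{obs:delta-breaker} applied to the configuration $\sigma$ gives
\begin{equation*}
    \Delta_{\rB \gets v_2}(\sigma) = \sum_{v_2 \in S \in E(\cH)} \lambda^{r(\sigma, S)},
\end{equation*}
and applied to the configuration $\sigma_{\rB \gets v_1}$ (in which $v_2$ is still unclaimed, as we may assume $v_1 \neq v_2$) it gives
\begin{equation*}
    \Delta_{\rB \gets v_2}(\sigma_{\rB \gets v_1}) = \sum_{v_2 \in S \in E(\cH)} \lambda^{r(\sigma_{\rB \gets v_1}, S)}.
\end{equation*}
Both sums range over exactly the same index set, namely the hyperedges containing $v_2$, so it suffices to show the inequality holds termwise.

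Fix a hyperedge $S$ with $v_2 \in S$ and split into two cases according to whether $v_1 \in S$. If $v_1 \notin S$, then \eqref{eq:risk-breaker} yields $r(\sigma_{\rB \gets v_1}, S) = r(\sigma, S)$, so the two terms are equal. If $v_1 \in S$, then \eqref{eq:risk-breaker} yields $r(\sigma_{\rB \gets v_1}, S) = \infty$, hence $\lambda^{r(\sigma_{\rB \gets v_1}, S)} = 0$ (using $0 < \lambda < 1$), which is certainly at most $\lambda^{r(\sigma, S)} \geq 0$. In either case the term on the left is bounded above by the term on the right, so summing over all $S \ni v_2$ gives $\Delta_{\rB \gets v_2}(\sigma_{\rB \gets v_1}) \le \Delta_{\rB \gets v_2}(\sigma)$, as claimed. (The degenerate case $v_1 = v_2$ is not really needed later, but if one wishes to allow it one can simply interpret $\Delta_{\rB \gets v_2}(\sigma_{\rB \gets v_1}) = 0$, and the inequality is then trivial.)

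There is essentially no obstacle here: the statement is purely bookkeeping, expressing the intuitively obvious fact that Breaker claiming a vertex can only decrease the urgency of claiming any other vertex, because it can only knock out hyperedges (sending their risk to $\infty$) while leaving all other risks untouched. The only points requiring a little care are the convention $\lambda^\infty = 0$ and the tacit assumption — in force throughout this section, since $w_\lambda(\sigma_0)$ is taken finite — that all the sums involved converge absolutely, so that rearranging and comparing them termwise is legitimate.
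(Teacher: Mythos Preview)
Your proof is correct and follows essentially the same route as the paper: both arguments apply \Cref{obs:delta-breaker} to express the two potentials as sums over hyperedges containing $v_2$, then use \eqref{eq:risk-breaker} to see that the terms with $v_1\in S$ vanish on the left while the remaining terms agree. The paper compresses this into a single chain of inequalities, but the content is identical.
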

\begin{proof}
From \Cref{obs:delta-breaker} and \eqref{eq:risk-breaker}, we get
\begin{align*}
    \Delta_{\rB \gets v_2}(\sigma_{\rB \gets v_1})
    &= \sum_{v_2 \in S \in E(\cH)} \lambda^{r(\sigma_{\rB \gets v_1}, S)}
    = \sum_{\substack{v_2 \in S \in E(\cH) \\ v_1 \notin S}} \lambda^{r(\sigma, S)} \\
    &\leq \sum_{v_2 \in S \in E(\cH)} \lambda^{r(\sigma, S)}
    = \Delta_{\rB \gets v_2}(\sigma),
\end{align*}
as claimed.
\end{proof}

\begin{observation}
\label{obs:order}
For any pair of unclaimed vertices $v_1, v_2 \in \rU_\sigma$, either it holds that
\begin{equation*}
    \Delta_{\rM \gets v_2}(\sigma_{\rM \gets v_1}) \leq \lambda^{-1}\Delta_{\rM \gets v_1}(\sigma),
\end{equation*}
or the same holds with $v_1$ and $v_2$ exchanged.
\end{observation}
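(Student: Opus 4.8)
The plan is to reduce the claim to an inequality between $\rB$-potentials and then to expand those potentials by grouping the relevant hyperedges according to which of $v_1,v_2$ they contain. First I would invoke \Cref{obs:delta-maker}: since $\Delta_{\rM\gets v}(\tau)=(\lambda^{-1}-1)\Delta_{\rB\gets v}(\tau)$ for every configuration $\tau$ and every unclaimed $v$, and $\lambda^{-1}-1>0$, the asserted inequality $\Delta_{\rM \gets v_2}(\sigma_{\rM \gets v_1}) \leq \lambda^{-1}\Delta_{\rM \gets v_1}(\sigma)$ is equivalent to
\[
\Delta_{\rB \gets v_2}(\sigma_{\rM \gets v_1}) \leq \lambda^{-1}\,\Delta_{\rB \gets v_1}(\sigma),
\]
and it suffices to prove this, or the version with $v_1$ and $v_2$ interchanged. (The potentials involved are finite whenever $w_\lambda(\sigma)<\infty$, which is the only case of interest; otherwise the statement is vacuous.)

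Next I would set, for $\set{i,j}=\set{1,2}$,
\[
A_i \defined \sum_{\substack{v_i \in S \in E(\cH) \\ v_j \notin S}} \lambda^{r(\sigma, S)}, \qquad C \defined \sum_{v_1,v_2 \in S \in E(\cH)} \lambda^{r(\sigma, S)},
\]
with the convention $\lambda^{\infty}=0$, so that by \Cref{obs:delta-breaker} one has $\Delta_{\rB \gets v_i}(\sigma) = A_i + C$. Now \eqref{eq:risk-maker} says that when Maker claims $v_1$ the quantity $r(\sigma,S)$ is unchanged for $S \not\ni v_1$ and drops by $1$ for $S \ni v_1$; applying \Cref{obs:delta-breaker} to $\sigma_{\rM\gets v_1}$ then yields $\Delta_{\rB \gets v_2}(\sigma_{\rM\gets v_1}) = A_2 + \lambda^{-1} C$. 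Substituting, the displayed inequality above becomes $A_2 + \lambda^{-1}C \le \lambda^{-1}(A_1+C)$, i.e.\ simply $A_2 \le \lambda^{-1}A_1$, while the interchanged version is $A_1 \le \lambda^{-1}A_2$.

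Finally I would observe that since $A_1,A_2\ge 0$ and $\lambda^{-1}\ge 1$, the smaller of $A_1$ and $A_2$ is at most $\lambda^{-1}$ times the larger, so at least one of $A_2\le\lambda^{-1}A_1$ and $A_1\le\lambda^{-1}A_2$ holds, which gives the claim. I do not expect a genuine obstacle here: the only points demanding a little care are the bookkeeping in the three‑way partition of hyperedges, the convention that hyperedges already containing a Breaker vertex contribute $0$ and so may be dropped throughout, and checking finiteness of the potentials so that the reduction in the first step is legitimate.
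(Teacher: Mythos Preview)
Your proof is correct and follows essentially the same approach as the paper: both reduce via \Cref{obs:delta-maker} to a $\rB$-potential inequality, split the relevant hyperedges into the three groups (your $A_1$, $A_2$, $C$), compute $\Delta_{\rB\gets v_2}(\sigma_{\rM\gets v_1})=A_2+\lambda^{-1}C$, and then use that one of $A_1\le A_2$ or $A_2\le A_1$ holds together with $\lambda^{-1}\ge 1$. Your presentation is slightly cleaner in that you cancel the $\lambda^{-1}C$ term first and isolate the condition $A_2\le\lambda^{-1}A_1$, whereas the paper runs the chain $A_2+\lambda^{-1}C\le A_1+\lambda^{-1}C\le\lambda^{-1}(A_1+C)$ directly, but this is a cosmetic difference only.
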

\begin{proof}
Assume, without loss of generality, that
\begin{align*}
    \sum_{\substack{v_2 \in S \in E(\cH) \\ v_1 \notin S}} \lambda^{r(\sigma, S)}
    \leq \sum_{\substack{v_1 \in S \in E(\cH) \\ v_2 \notin S}} \lambda^{r(\sigma, S)}.
\end{align*}
From \Cref{obs:delta-breaker}, we have
\begin{align*}
    \Delta_{\rB \gets v_2}(\sigma_{\rM \gets v_1})
    &= \sum_{v_2 \in S \in E(\cH)} \lambda^{r(\sigma_{\rM \gets v_1}, S)} \\
    &= \sum_{v_1, v_2 \in S \in E(\cH)} \lambda^{r(\sigma, S) - 1}
    + \sum_{\substack{v_2 \in S \in E(\cH) \\ v_1 \notin S}} \lambda^{r(\sigma, S)} \\
    &\leq  \lambda^{-1} \sum_{v_1, v_2 \in S \in E(\cH)} \lambda^{r(\sigma, S)} + \sum_{\substack{v_1 \in S \in E(\cH) \\ v_2 \notin S}} \lambda^{r(\sigma, S)} \\
    &\leq \lambda^{-1} \sum_{v_1 \in S \in E(\cH)} \lambda^{r(\sigma, S)}
    = \lambda^{-1} \Delta_{\rB \gets v_1}(\sigma).
\end{align*}
Finally, from \Cref{obs:delta-maker}, we have that $\Delta_{\rM \gets v_2}(\sigma_{\rM \gets v_1}) \leq \lambda^{-1}\Delta_{\rM \gets v_1}(\sigma)$.
\end{proof}

\begin{observation}
\label{obs:maker-increment}
For any unclaimed vertex $v \in \rU_\sigma$, we have
\begin{align*}
    w_\lambda(\sigma_{\rM \gets v}) \leq \lambda^{-1}w_\lambda(\sigma).
\end{align*}
\end{observation}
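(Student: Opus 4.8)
The plan is to reduce everything to the two potential identities already established. By definition of the $\rM$-potential, $w_\lambda(\sigma_{\rM \gets v}) = w_\lambda(\sigma) + \Delta_{\rM \gets v}(\sigma)$, so it suffices to show $\Delta_{\rM \gets v}(\sigma) \leq (\lambda^{-1} - 1)\, w_\lambda(\sigma)$. First I would invoke \Cref{obs:delta-maker} to rewrite $\Delta_{\rM \gets v}(\sigma) = (\lambda^{-1} - 1)\,\Delta_{\rB \gets v}(\sigma)$, noting that the factor $\lambda^{-1} - 1$ is positive since $0 < \lambda < 1$, so the inequality is preserved under multiplication by it.

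Next I would bound $\Delta_{\rB \gets v}(\sigma)$ by the total danger. By \Cref{obs:delta-breaker} we have $\Delta_{\rB \gets v}(\sigma) = \sum_{v \in S \in E(\cH)} \lambda^{r(\sigma, S)}$, which is a sum over the hyperedges through $v$ of nonnegative terms $\lambda^{r(\sigma,S)} = w_\lambda(\sigma, S)$ (here $\lambda^{\infty}$ is read as $0$). Since this is a subsum of $w_\lambda(\sigma) = \sum_{S \in E(\cH)} w_\lambda(\sigma, S)$, we get $\Delta_{\rB \gets v}(\sigma) \leq w_\lambda(\sigma)$.

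Combining these two facts,
\begin{equation*}
    w_\lambda(\sigma_{\rM \gets v}) = w_\lambda(\sigma) + (\lambda^{-1} - 1)\,\Delta_{\rB \gets v}(\sigma) \leq w_\lambda(\sigma) + (\lambda^{-1} - 1)\,w_\lambda(\sigma) = \lambda^{-1} w_\lambda(\sigma),
\end{equation*}
which is the claimed bound. There is essentially no obstacle here; the only point requiring a little care is the sign of $\lambda^{-1} - 1$ (so that the subsum bound goes the right way) and the convention that hyperedges already touched by Breaker contribute $0$, so that dropping them from the sum only decreases it.
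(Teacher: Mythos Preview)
Your proof is correct and follows essentially the same approach as the paper: both use \Cref{obs:delta-maker} to write $w_\lambda(\sigma_{\rM \gets v}) = w_\lambda(\sigma) + (\lambda^{-1}-1)\Delta_{\rB \gets v}(\sigma)$ and then exploit the nonnegativity of the dangers. The only cosmetic difference is that the paper rewrites $w_\lambda(\sigma)$ as $w_\lambda(\sigma_{\rB \gets v}) + \Delta_{\rB \gets v}(\sigma)$ and bounds $w_\lambda(\sigma_{\rB \gets v}) \leq \lambda^{-1} w_\lambda(\sigma_{\rB \gets v})$, whereas you directly use the subsum bound $\Delta_{\rB \gets v}(\sigma) \leq w_\lambda(\sigma)$; these two inequalities are equivalent.
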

\begin{proof}
From \Cref{obs:delta-maker} and since $w_\lambda(\sigma) = w_\lambda(\sigma_{\rB \gets v}) + \Delta_{\rB \gets v}(\sigma)$, we get
\begin{align*}
    w_\lambda(\sigma_{\rM \gets v})
    &= w_\lambda(\sigma) + \Delta_{\rM \gets v}(\sigma) \\
    &= w_\lambda(\sigma) + (\lambda^{-1} - 1)\Delta_{\rB \gets v}(\sigma) \\
    &= w_\lambda(\sigma_{\rB \gets v}) + \lambda^{-1}\Delta_{\rB \gets v}(\sigma) \\
    &\leq \lambda^{-1}\p[\big]{w_\lambda(\sigma_{\rB \gets v}) + \Delta_{\rB \gets v}(\sigma)}
    = \lambda^{-1}w_\lambda(\sigma). \qedhere
\end{align*}
\end{proof}

We are now ready to analyse that the potential-based strategy and give a proof of \Cref{thm:beck-infinite}, which is stated below with a slight change of notation.

\begin{theorem}
\label{thm:beck-infinite-alternative}
Let $\cH$ be a hypergraph and let $m,b \geq 1$ be integers.
Let $\sigma_0$ be a game configuration on $\cH$.
If
\begin{equation*}
    w_\lambda(\sigma_0) = \sum_{S \in E(\cH)} w_\lambda(\sigma_0, S) < \frac{1}{b+1},
\end{equation*}
for $\lambda = (b+1)^{-1/m}$.
Then Breaker has a strategy for the $(m,b)$ Maker-Breaker game on $\cH$ with initial configuration $\sigma_0$ which guarantees that Maker never fully claims a winning set $S \in E(\cH)$.
In particular, Breaker wins if $\cH$ is finite.
\end{theorem}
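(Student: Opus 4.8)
The plan is to show that if Breaker always claims the $b$ vertices that maximise the $\rB$-potential (equivalently, minimise the resulting danger), then the total danger $w_\lambda$ never increases over the course of a full round. Since $w_\lambda(\sigma_0) < 1/(b+1) < 1$ and the danger of any single hyperedge $S$ fully claimed by Maker would be $\lambda^0 = 1$, this immediately gives that Maker can never fully claim a winning set. First I would set up the bookkeeping for a single round: Maker claims $m$ vertices $a_1, \dots, a_m$ one at a time, then Breaker claims $b$ vertices $c_1, \dots, c_b$ one at a time, producing intermediate configurations. I would track the danger after each individual move using the increment identities already established: claiming a vertex by Maker multiplies the danger by at most $\lambda^{-1}$ (Observation \ref{obs:maker-increment}), and claiming a vertex $v$ by Breaker decreases the danger by exactly $\Delta_{\rB \gets v}(\sigma)$ (by definition of the $\rB$-potential).

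The key step is to lower-bound the total reduction Breaker achieves in his $b$ moves against the total increase Maker caused in her $m$ moves. Here is the heart of it. Let $\sigma$ be the configuration right after Maker's $m$ moves in the round. For any unclaimed vertex $v$, the ``greedy'' quantity to consider is $\Delta_{\rB \gets v}(\sigma)$; let $D$ be the maximum of this over all unclaimed $v$. I claim Maker's total damage this round is at most $m \lambda^{-1} D$ up to the relevant factors --- more carefully, one shows using Observation \ref{obs:order} that when Maker plays greedily-optimally (or adversarially), the sum of the marginal increments $\Delta_{\rM \gets a_i}$ telescopes in a way controlled by $\lambda^{-1}$ times the largest single $\rB$-potential available, so the net effect of Maker's round is bounded by roughly $(\lambda^{-m} - 1) D'$ for an appropriate reference potential $D'$. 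On the other side, Breaker, claiming the $b$ vertices of largest $\rB$-potential in succession, reduces the danger by at least $b D''$ where $D''$ is controlled below by the same reference potential, using Observation \ref{obs:monotone} to ensure that removing earlier Breaker picks does not help later ones too much, hence the later $\rB$-potentials stay comparable. Combining, the net change over the round is at most
\begin{equation*}
  \bigl( \lambda^{-m} - 1 - b \bigr) D
\end{equation*}
times a suitable nonnegative factor, and the choice $\lambda = (b+1)^{-1/m}$ makes $\lambda^{-m} = b+1$, so this bracket vanishes and $w_\lambda$ does not increase over the round. (One must also check the mid-round configurations: the danger could temporarily rise during Maker's sub-moves, but since the only bad event is Maker \emph{completing} a hyperedge, and completing $S$ makes $w_\lambda(\cdot, S) = 1$, any such completion would force the total danger above $1$, contradicting that it started below $1/(b+1)$ and Breaker has not yet responded --- so one argues the danger stays below $1$ even at the intra-round level, which requires a slightly careful ordering argument or the observation that $w_\lambda < 1/(b+1)$ at the start of the round leaves enough slack to absorb $m$ multiplications by $\lambda^{-1}$ before Breaker responds, because $\lambda^{-m}/(b+1) = 1$.)

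The main obstacle I anticipate is the combinatorial matching between Maker's $m$ increments and Breaker's $b$ decrements: the $\rB$-potentials of vertices shift as play proceeds within the round, so one cannot naively compare ``the same'' potentials on both sides. The cleanest route is to fix the reference configuration to be the one at the start of the round (before Maker moves), use Observation \ref{obs:monotone} to argue Breaker's later picks have $\rB$-potential at least as large as a quantity pinned to that reference configuration, and use Observation \ref{obs:order} (applied $m-1$ times) to argue Maker's cumulative increase is at most $\lambda^{-1} + \lambda^{-2} + \dots$ times a single reference $\rB$-potential, i.e.\ at most $(\lambda^{-m}-1)/(\lambda^{-1}-1)$ times the marginal quantity $(\lambda^{-1}-1)\Delta_{\rB \gets v_1}$ from Observation \ref{obs:delta-maker}, which simplifies to $(\lambda^{-m}-1)\Delta_{\rB\gets v_1}(\sigma_0^{\mathrm{round}})$. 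Since Breaker then extracts at least $b$ times a comparable amount, the identity $\lambda^{-m} - 1 = b$ closes the argument. Finally, for the ``in particular'' clause: if $\cH$ is finite then every vertex is eventually claimed, so the game ends; since Maker never completes a winning set, Breaker has hit every hyperedge, i.e.\ Breaker wins.
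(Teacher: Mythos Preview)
Your overall scheme is the paper's: Breaker plays greedily on the $\rB$-potential, and one shows the total danger stays below $1$ by comparing Maker's cumulative increment against Breaker's cumulative decrement, using Observations~\ref{obs:delta-maker}, \ref{obs:monotone}, \ref{obs:order} and \ref{obs:maker-increment} exactly as you list. Your treatment of the first round (absorbing $m$ multiplications by $\lambda^{-1}$ starting from below $1/(b+1)$) and of the finite case is also the paper's.

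There is, however, a genuine gap in how you group the round. You assert that $w_\lambda$ does not increase across a Maker-then-Breaker block, but this is false. Take $m=b=1$, $\lambda=1/2$, and three hyperedges $S_i=\{a,b_i,c_i\}$ for $i=1,2,3$ with the seven vertices distinct. Then $w_\lambda(\sigma_0)=3/8<1/2$; Maker claims $a$, raising $w_\lambda$ to $3/4$; Breaker's greedy reply kills exactly one $S_i$, leaving $w_\lambda=1/2>3/8$. So the danger \emph{can} strictly increase over a Maker-then-Breaker round, and your appeal to Observation~\ref{obs:monotone} to lower-bound Breaker's later picks by ``the same reference potential'' is the step that breaks: that observation gives \emph{upper} bounds on potentials after a Breaker move, not lower bounds, so it cannot pin $\Delta_{\rB}$ above $b\,\Delta_{\rB\gets a_1}(\sigma_0^{\mathrm{round}})$.

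The paper avoids this by shifting the comparison window: it shows $w_\lambda$ is non-increasing along the checkpoints \emph{at the end of Maker's turn}, i.e.\ across a Breaker-then-Maker block. This ordering is what makes the linkage work: Breaker's last greedy pick $v(gk)$ maximises $\Delta_{\rB\gets\cdot}(\sigma_{gk-1})$, and then Maker's reordered first vertex $v(gk+1)$ is compared at $\sigma_{gk}=(\sigma_{gk-1})_{\rB\gets v(gk)}$, where Observation~\ref{obs:monotone} plus greediness give $\Delta_{\rB\gets v(gk+1)}(\sigma_{gk})\le\Delta_{\rB\gets v(gk)}(\sigma_{gk-1})$. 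Combined with $\Delta_{\rB}\ge b\,\Delta_{\rB\gets v(gk)}(\sigma_{gk-1})$ (from monotonicity of successive greedy picks) and your own bound $\Delta_{\rM}\le(\lambda^{-m}-1)\Delta_{\rB\gets v(gk+1)}(\sigma_{gk})$, the identity $\lambda^{-m}-1=b$ closes the loop. In the counterexample above one indeed has $w_\lambda(\sigma_3)=3/4=w_\lambda(\sigma_1)$, so the shifted inequality holds with equality. All your ingredients are correct; only the block boundary needs to move.
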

\begin{proof}
Write $g = m + b$.
The game starts with configuration $\sigma_0$.
We write $v(i)$ for the $i$-th claimed vertex from round $1$ onward.
The game configurations on the $(k+1)$-st round are the following ones
\begin{align*}
    \sigma_{gk + 1} &= (\sigma_{gk})_{\rM \gets v(gk + 1)}, &
    \sigma_{gk + m + 1} &= (\sigma_{gk + m})_{\rB \gets v(gk + m + 1)}, \\
    \sigma_{gk + 2} &= (\sigma_{gk + 1})_{\rM \gets v(gk + 2)}, &
    \sigma_{gk + m + 2} &= (\sigma_{gk + m + 1})_{\rB \gets v(gk + m + 2)}, \\
    &\;\;\vdots & &\;\;\vdots \\
    \sigma_{gk + m} &= (\sigma_{gk + m - 1})_{\rM \gets v(gk + m)}, &
    \sigma_{gk + m + b} &= (\sigma_{gk + m + b - 1})_{\rB \gets v(gk + m + b)}.
\end{align*}
For any $k \geq 1$ we have $gk-b = g(k-1)+m$ and the corresponding game configuration $\sigma_{gk-b} = \sigma_{g(k-1)+m}$ is the one obtained at the end of Maker's $(k-1)$-st turn.

We equip Breaker with the following potential-based strategy: Breaker will always claim an unclaimed vertex $v \in \rU_\sigma$ that maximises the potential $\Delta_{\rB \gets v}(\sigma)$.
In other words, for every $k \geq 0$ and $1 \leq i \leq b$, and for a unclaimed vertex $v \in \rU_{\sigma_{gk-i}}$ we have
\begin{equation}
\label{eq:breaker-max}
  \Delta_{\rB \gets v}(\sigma_{gk-i}) \leq \Delta_{\rB \gets v(gk-i+1)}(\sigma_{gk-i}).
\end{equation}
This implies, for instance, that
\begin{align*}
    \Delta_{\rB \gets v(gk)}(\sigma_{gk-1})
    &= \Delta_{\rB \gets v(gk)}((\sigma_{gk-2})_{\rB \gets v(gk-1)}) & & \\
    &\leq \Delta_{\rB \gets v(gk)}(\sigma_{gk-2}) & & (\text{by \Cref{obs:monotone}}) \\
    &\leq \Delta_{\rB \gets v(gk-1)}(\sigma_{gk-2}) & & (\text{by \eqref{eq:breaker-max}}).
\end{align*}
By induction, we get that
\begin{align}
\label{eq:breaker-mono}
    \Delta_{\rB \gets v(gk)}(\sigma_{gk-1})
    &\leq \Delta_{\rB \gets v(gk-1)}(\sigma_{gk-2})
    \leq \dotsb \leq \Delta_{\rB \gets v(gk-b+1)}(\sigma_{gk-b}).
\end{align}

Breaker objective is to prevent Maker to fully claim a winning set $S \in E(\cH)$.
This is the case if $w_\lambda(\sigma_t) < 1$ for all $t \geq 1$.
To show this, it suffices to have $w_\lambda(\sigma_m) < 1$ and
\begin{equation}
\label{eq:breaker-goal}
    w_\lambda(\sigma_{gk+m}) \leq w_\lambda(\sigma_{gk-b}), \quad \text{ for all $k \geq 1$.}
\end{equation}
The condition that $w_\lambda(\sigma_m) < 1$ is satisfied as $w_\lambda(\sigma_0) < 1/(b+1)$.
Indeed, by \Cref{obs:maker-increment}, we have $w_\lambda(\sigma_m) \leq \lambda^{-m} w_\lambda(\sigma_0) = (b+1) w_\lambda(\sigma_0) < 1$.
We will find an equivalent formulation of \eqref{eq:breaker-goal} that is easier to handle.
Observe that
\begin{align*}
    w_\lambda(\sigma_{gk-b}) - w_\lambda(\sigma_{bk+m})
    &= \sum_{\ell=1}^g \p[\big]{w_\lambda(\sigma_{gk-b+\ell-1}) - w_\lambda(\sigma_{gk-b+\ell}) } \\
    &= \sum_{i=1}^b \Delta_{\rB \gets v(gk-b+i)}(\sigma_{gk-b+i-1}) - \sum_{j=1}^m \Delta_{\rM \gets v(gk+j)}(\sigma_{gk+j-1}),
\end{align*}
as we have, for $1 \leq i \leq b$,
\begin{align*}
    w_\lambda(\sigma_{gk-b+i-1}) - w_\lambda(\sigma_{gk-b+i})
    &= w_\lambda(\sigma_{gk-b+i-1}) - w_\lambda((\sigma_{gk-b+i-1})_{\rB \gets v(gk-b+i)}) \\
    &= \Delta_{\rB \gets v_{gk-b+i}}(\sigma_{gk-b+i-1}),
\end{align*}
and similarly for $1 \leq j \leq m$, we have
\begin{align*}
    w_\lambda(\sigma_{gk+j-1}) - w_\lambda(\sigma_{gk+j})
    &= w_\lambda(\sigma_{gk+j-1}) - w_\lambda((\sigma_{gk+j-1})_{\rM \gets v(gk+j)}) \\
    &= - \Delta_{\rM \gets v(gk+j)}(\sigma_{gk+j-1}).
\end{align*}
Reordering the sum in $i$, \eqref{eq:breaker-goal} is equivalent to
\begin{equation}
\label{eq:breaker-goal-mb}
    \Delta_{\rM} \defined \sum_{j=1}^m \Delta_{\rM \gets v(gk+j)}(\sigma_{gk+j-1})
    \leq \sum_{i=1}^b \Delta_{\rB \gets v(gk-i+1)}(\sigma_{gk-i}) \defines \Delta_{\rB}.
\end{equation}

Observe that \eqref{eq:breaker-mono} gives us
\begin{align}
\label{eq:breaker-lower-bound}
    \Delta_{\rB} = \sum_{i=1}^b \Delta_{\rB \gets v(gk-i+1)}(\sigma_{gk-i}) \geq b \Delta_{\rB \gets v(gk)}(\sigma_{gk-1})
\end{align}
Repeatedly applying \Cref{obs:order}, and possibly reordering the vertices $v(gk+j)$, $1 \leq j \leq m$, we have
\begin{align*}
    \Delta_{\rM} = \sum_{j = 1}^{m}\Delta_{\rM \gets v(gk+j)}(\sigma_{gk+j-1}) \leq (1 + \lambda^{-1} + \dotsb + \lambda^{-m+1}) \Delta_{\rM \gets v(gk+1)}(\sigma_{gk}).
\end{align*}
Now, by \Cref{obs:delta-maker}, we have
\begin{align*}
    \Delta_{\rM} &\leq (1 + \lambda^{-1} + \dotsb + \lambda^{-m+1}) (\lambda^{-1} - 1) \Delta_{\rB \gets v(gk+1)}(\sigma_{gk})\\
    &= (\lambda^{-m} - 1) \Delta_{\rB \gets v(gk+1)}(\sigma_{gk}).
\end{align*}
But observe that
\begin{align*}
    \Delta_{\rB \gets v(gk+1)}(\sigma_{gk})
    &= \Delta_{\rB \gets v(gk+1)}((\sigma_{gk-1})_{\rB \gets v(gk)}) & & (\text{by definition of $\sigma_{gk}$}) \\
    &\leq \Delta_{\rB \gets v(gk+1)}(\sigma_{gk-1}) & & (\text{by \Cref{obs:monotone}}) \\
    &\leq \Delta_{\rB \gets v(gk)}(\sigma_{gk-1}). & & (\text{by \eqref{eq:breaker-max}})
\end{align*}
Finally, this gives
\begin{align*}
    \Delta_{\rM}
    &\leq (\lambda^{-m} - 1) \Delta_{\rB \gets v(gk+1)}(\sigma_{gk}) & & \\
    &\leq (\lambda^{-m} - 1) \Delta_{\rB \gets v(gk)}(\sigma_{gk-1}) & &  \\
    &\leq (\lambda^{-m} - 1) b^{-1} \Delta_{\rB} & & (\text{by \eqref{eq:breaker-lower-bound}}) \\
    &= \Delta_{\rB}. & & (\text{by the choice of $\lambda$})
\end{align*}
Therefore, we have \eqref{eq:breaker-goal-mb} and we are done.
\end{proof}

A minor modification in the proof above handles the case of boosted games.

\begin{corollary}
\label{cor:beck-infinite-boosted}
Let $\cH$ be a hypergraph and let $m,b \geq 1$ be integers.
Let $\sigma_0$ be a game configuration on $\cH$.
If $\lambda = (b+1)^{-1/m}$ and
\begin{equation}
\label{eq:beck-condition-boosted}
    w_\lambda(\sigma_0) < \frac{1}{(b+1)^{(m+c)/m}}.
\end{equation}
Then Breaker has a strategy for the $c$-boosted $(m,b)$ Maker-Breaker game on $\cH$ with initial configuration $\sigma_0$ which guarantees that Maker never fully claims a winning set $S \in E(\cH)$.
In particular, Breaker wins if $\cH$ is finite.
\end{corollary}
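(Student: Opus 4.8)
The plan is to run the proof of \Cref{thm:beck-infinite-alternative} essentially verbatim, changing only how we handle Maker's opening turn. Recall that in the $c$-boosted $(m,b)$ game Maker claims $m+c$ unclaimed vertices on her first turn and $m$ unclaimed vertices on every subsequent turn, while Breaker always claims $b$ vertices; as before, set $\lambda = (b+1)^{-1/m}$ and equip Breaker with the greedy potential-based strategy that, at each of his moves, claims an unclaimed vertex $v$ maximising $\Delta_{\rB \gets v}(\sigma)$. Since the danger $w_\lambda$ is non-decreasing along a Maker turn and non-increasing along a Breaker turn, it suffices to show that $w_\lambda$ stays below $1$ at the end of each Maker turn.

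First I would bound the danger immediately after the opening turn. Writing $\sigma_0, \sigma_1, \dotsc, \sigma_{m+c}$ for the configurations produced as Maker claims her $m+c$ opening vertices, \Cref{obs:maker-increment} applied $m+c$ times gives $w_\lambda(\sigma_{m+c}) \le \lambda^{-(m+c)} w_\lambda(\sigma_0) = (b+1)^{(m+c)/m}\, w_\lambda(\sigma_0)$, which is strictly less than $1$ by the hypothesis \eqref{eq:beck-condition-boosted}. This replaces the estimate $w_\lambda(\sigma_m) < 1$ used in the non-boosted proof.

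From the second turn onward the game is an ordinary $(m,b)$ game: Breaker makes $b$ moves, then Maker $m$, and so on, exactly as in \Cref{thm:beck-infinite-alternative} once all indices are shifted by $c$. The chain of inequalities leading to \eqref{eq:breaker-goal-mb} relies only on Breaker's greedy choice together with the fact that each Maker turn, from the second on, is immediately preceded by a full Breaker turn of $b$ moves; this is precisely the situation here once the opening turn has passed. Hence, with $g = m+b$, the inequality $w_\lambda(\sigma_{c+gk+m}) \le w_\lambda(\sigma_{c+g(k-1)+m})$ holds for all $k \ge 1$, so the end-of-Maker-turn dangers form a non-increasing sequence bounded above by $w_\lambda(\sigma_{m+c}) < 1$. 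Therefore $w_\lambda(\sigma_t) < 1$ for every $t$, which means Maker never fully claims a hyperedge $S \in E(\cH)$; and if $\cH$ is finite, every vertex is eventually claimed, so Breaker wins. The only point needing care is the bookkeeping of the shift by $c$ — checking that the monotonicity relations \eqref{eq:breaker-mono} and the estimate \eqref{eq:breaker-goal-mb} apply unchanged from the second round on — which is automatic from the turn structure and presents no genuine obstacle.
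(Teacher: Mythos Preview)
The proposal is correct and follows essentially the same approach as the paper: both argue that the only change from \Cref{thm:beck-infinite-alternative} is the analysis of Maker's opening turn, and both bound $w_\lambda(\sigma_{m+c}) \le \lambda^{-(m+c)} w_\lambda(\sigma_0) < 1$ via \Cref{obs:maker-increment} and the hypothesis \eqref{eq:beck-condition-boosted}, after which the rest of the argument carries over verbatim. Your write-up is in fact more explicit than the paper's about the index shift by $c$ and why the round-by-round inequality \eqref{eq:breaker-goal} continues to hold, but the content is the same.
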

\begin{proof}
The proof of \Cref{cor:beck-infinite-boosted} is almost identical to the proof of \Cref{thm:beck-infinite-alternative} with a small modification in the analysis of the first turn of the game.
In fact, if Breaker can guarantee that the total danger is $< 1$ by the end of the first turn, then the result follows.
Now Maker has $m + c$ moves in the first round instead of $m$, so we need to guarantee that $w_\lambda(\sigma_{m+c}) < 1$.
Indeed, by \Cref{obs:maker-increment}, we have $w_\lambda(\sigma_{m+c}) \leq \lambda^{-(m+c)} w_\lambda(\sigma_0) < 1$, as long as $w_\lambda(\sigma_0) < \lambda^{m+c}$, which is given by \eqref{eq:beck-condition-boosted}.
\end{proof}


\section{Open problems and heuristics}
\label{sec:heuristics}

In this final section we review the current state of the art of Maker-Breaker percolation games and pose a few problems and conjectures.
We also review some results and questions due to Day and Falgas-Ravry~\cites{Day2021-ua, Day2021-sa}.

\subsection{Deterministic games on \texorpdfstring{$\ZZ^2$}{Z2}}

We say that $\rho = \rho(\Lambda)$ is a \indef{critical bias ratio} for the board $\Lambda$ if there exists a positive function $\varphi(m) = o(m)$ such that Breaker wins the $(m, \rho m + \varphi(m))$ game on $\Lambda$ and Maker wins the $(m, \rho m - \varphi(m))$ game on $\Lambda$.
Determining whether or not the critical bias ratio exists in a lattice seems to be a hard problem.
In particular, even for the square lattice this is yet unknown, as was originally asked by Day and Falgas-Ravry.

\begin{problem}[Question 5.1 in~\cite{Day2021-ua}]
Determine whether $\rho(\ZZ^2)$ exists.
\end{problem}

As they pointed out, this can be seen as a Maker-Breaker analogue of the Harris-Kesten theorem.
Naturally, the existence and the value of $\rho(\Lambda)$ for other graphs $\Lambda$ are also of great interest.
In their work, Day and Falgas-Ravry also showed the following general result for $\ZZ^2$.

\begin{theorem}[Theorems 1.2, 1.3 and 1.4 in~\cite{Day2021-ua}]
\label{thm:original}
For any $m, b \geq 1$, we have
\begin{enumerate}
    \item Maker has a winning strategy for the $(1,1)$ game on $\ZZ^2$;
    \item If $m \geq 2b$, then Maker has a winning strategy for the $(m,b)$ game on $\ZZ^2$;
    \item If $b \geq 2m$, then Breaker has a winning strategy for the $(m,b)$ game on $\ZZ^2$.
\end{enumerate}
\end{theorem}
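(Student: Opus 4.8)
The cleanest setting for all three parts is the planar dual of $\ZZ^2$, again a copy of $\ZZ^2$: Breaker wins exactly by eventually owning a self-avoiding dual cycle around $v_0$, and Maker wins by forever preventing this, i.e.\ by keeping $v_0$ joined to infinity. For (i) I would use the pairing strategy from the introduction. Partition $E(\ZZ^2)$ into pairs by matching, at each vertex $u$, the edge $\{u,u+(0,1)\}$ with $\{u,u+(1,0)\}$, and let Maker play reactively: when Breaker takes one edge of a pair, Maker takes the other. Then every pair always contains at most one Breaker edge, so from any vertex the greedy up-right walk (go north unless the north edge is Breaker's, in which case go east, which is then Maker's) is an infinite self-avoiding path avoiding all Breaker edges; hence $v_0$ stays in an infinite component forever and Maker wins.

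For (ii), since extra moves never hurt Maker it suffices to take $m=2b$, and I would have Maker build and defend a half-infinite ``ladder'' $L$ rooted at $v_0$. She first claims the edge $\{v_0,v_0+(0,1)\}$, partitions the remaining edges of $L$ into the three-edge gadgets (``triples'') used for the protective game in \Cref{sec:potential}, and responds to any Breaker move inside a gadget by claiming the rest of that gadget, playing arbitrarily elsewhere. For $b=1$ this finishes at once: each gadget ever receives at most one Breaker edge; every cut of $L$ separating $v_0$ from its far end either uses the distinguished edge or contains two edges of one gadget; and any finite cut separating $v_0$ from infinity in $\ZZ^2$ restricts to such a cut of $L$, since $L$ is an infinite connected subgraph through $v_0$. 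Thus Breaker never owns a full cut. For $m=2b\ge 2$ the obstruction is that Breaker makes $b$ moves before Maker replies and can pile several onto one gadget, deciding it in a single turn; I would counter this by replacing the ladder with a more robust structure — a wider strip, or one with redundant gadgets — chosen so that every cut of it is ``thick'' inside at least one gadget while gadgets stay small enough for Maker's $2b$ moves to restore her invariant. Verifying the cut geometry of this structure and the bookkeeping of Maker's budget is the delicate part.

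For (iii), with $b\ge 2m$, I would run a ``race'' argument for Breaker. Fix a thin annular band of concentric square dual frames around $v_0$, all of side length roughly $\ell$ for some large $\ell$, and let Breaker build a dual cycle going once around $v_0$, processing its edges in order and, whenever the edge he wants is already Maker's, detouring by a bounded number of edges to a neighbouring frame before continuing. The cycle he eventually closes has length $\ell$ up to $O(1)$ per Maker edge inside the band; since Maker makes only $m$ moves per Breaker turn, the total number $T$ of edges Breaker claims obeys a bound of the form $T \le c_1\ell + c_2 (m/b) T$, which is finite once $b$ exceeds a fixed constant times $m$ — and a careful choice of detour gadget pushes this constant down to $2$. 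Taking $\ell$ large enough, and the band thick enough that Maker cannot exhaust its frames, then forces a Breaker win in finitely many rounds.

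I expect (i) and, modulo the detour gadget, (iii) to be essentially routine; the main obstacle is (ii). The difficulty there is structural rather than computational: Breaker wins a percolation game in finite time while Maker only wins by never losing, so Maker's strategy must be perpetually self-repairing, and the clean triple argument that settles $(2,1)$ degrades precisely when Breaker can concentrate moves on one gadget within a turn, which is what forces the heavier, redundancy-laden structure whose cut geometry then has to be analysed in detail.
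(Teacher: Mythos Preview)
This theorem is not proved in the present paper --- it is quoted from Day and Falgas-Ravry~\cite{Day2021-ua} --- so there is no in-paper argument to compare against. Your treatment of (i) is correct and coincides with the pairing strategy sketched in the introduction.

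For (ii) and (iii), what you have written is a programme rather than a proof, and you have correctly located the difficulties but not resolved them. In (ii), the infinite-ladder-with-triples argument does settle the $(2,1)$ case --- it is the protective game of \Cref{sec:potential} run on an infinite ladder --- but for $b\ge 2$ you only gesture at ``a wider strip, or one with redundant gadgets''. The obstruction you identify is genuine: once Breaker can place two moves in a single triple within one turn, the invariant ``every triple carries at most one Breaker edge'' is lost and the cut claim no longer bites. A complete argument must exhibit an explicit gadget decomposition of some infinite subgraph and prove both that every finite cut meets some gadget in more edges than Breaker can seize in one turn \emph{and} that Maker's $2b$ moves suffice to close out all gadgets Breaker touched; neither step is carried out here.

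In (iii), the race inequality $T\le c_1\ell + c_2(m/b)T$ is only useful when $c_2(m/b)<1$, and you assert without proof that the detour gadget can be chosen with $c_2=2$. A generic bounded-length detour around a Maker edge costs some constant number of extra Breaker edges, and without a specific construction there is no reason this constant should be as small as $2$; reaching the sharp threshold $b\ge 2m$ is precisely the content of the result, not a parameter to be tuned afterwards. So (iii) is not ``essentially routine modulo the gadget'': the gadget \emph{is} the proof.
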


In particular, this implies that if $\rho(\ZZ^2)$ exists, then $1/2 \leq \rho(\ZZ^2) \leq 2$.
The constant $2$ in these results is due to the fact that a connected subset of $\ZZ^2$ with $k$ edges has at most $2k + 4$ edges in its edge boundary.
This comprised a barrier for strategies that somehow only deal with local structures of claimed edges.
In our previous work~\cite{Dvorak2021-ad} we surpassed this barrier by showing that Breaker still has a winning strategy if $b \geq (2 - 1/14 + o(1))m$.
Indeed, we established the following result.

\begin{theorem}[Theorem 1.2 in~\cite{Dvorak2021-ad}]
\label{thm:main-old}
Breaker has a winning strategy for the $(m,b)$ Maker-Breaker percolation game on $\ZZ^2$ if $b \geq 2m - \floor{(m - 24)/14}$.
\end{theorem}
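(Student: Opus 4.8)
The plan is to sharpen the ``Breaker shadows Maker'' strategy behind the classical bound $b \ge 2m$ of Day and Falgas-Ravry, squeezing a linear-in-$m$ saving out of the slack in the planar edge-isoperimetric inequality. First I would set up a shadowing invariant: throughout the game Breaker maintains a finite connected subgraph $R \subseteq \ZZ^2$ with $v_0 \in V(R)$ containing the component of $v_0$ in Maker's graph, together with a commitment to eventually claim every edge of $\ZZ^2$ with exactly one endpoint in $R$ — call these the \emph{fringe} edges of $R$; since $R$ is finite, removing the fringe makes the component of $v_0$ finite, so the fringe is a separating set. Let $\Phi$ count the fringe edges of $R$ not yet claimed by Breaker, so $\Phi = 0$ means Breaker has already claimed a separating set and has won. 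On his turn Breaker claims $b$ of the owed fringe edges; on Maker's turn she claims at most $m$ edges, after which Breaker enlarges $R$ minimally so that it again contains Maker's component and re-commits to the new fringe. The stated fact that a connected subgraph with $k$ edges has at most $2k+4$ fringe edges shows that each Maker edge creates at most two new owed edges, so $\Phi$ changes by at most $2m - b$ per round; when $b \ge 2m$ this keeps $\Phi$ bounded, and a short separate argument — Maker cannot push past a fringe enclosing a region of bounded size — forces $\Phi$ to $0$ in finitely many rounds, reproving the baseline.

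To go strictly below $2$, I would exploit that the bound $2k+4$ is attained only by half-lines and axis-parallel rectangles: every concave corner of $R$, and every Maker edge that lands strictly inside $R$ instead of extending it, lowers the cost of Breaker's repair by a fixed amount. The structural core would be a windowed dichotomy: for some absolute constant $c_0$, over any $c_0$ consecutive rounds, either Breaker can already seal a bounded-diameter sub-region around $v_0$ and win outright, or Maker's moves over that span add strictly fewer than $b\,c_0$ owed fringe edges to $\Phi$ — because enough of her edges fall inside $R$, or because the protrusions she creates have bounded width and so necessarily acquire concave corners at their bases, capping her efficiency at roughly $(2 - \tfrac{1}{14})m$ new owed edges per round. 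Balancing an explicit window length against an explicit finite family of ``protrusion gadgets'' is exactly what pins the achievable saving at one owed edge per $14$ of Maker's, so that the budget $b \ge 2m - \floor{(m-24)/14}$ is enough; the additive $24$ absorbs the $+4$ fringe term and the finitely many transient rounds before the dichotomy becomes usable.

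Given the dichotomy, the conclusion is immediate: whenever Maker avoids the ``Breaker wins now'' branch, $\Phi$ strictly decreases over every block of $c_0$ rounds, so, starting from the bounded value $\Phi$ takes after Maker's first turn, it reaches $0$ in finitely many rounds and Breaker wins; a ``savings bank'' bookkeeping, initialised to absorb the transient phase, makes this precise. The main obstacle is the structural lemma and its amortisation: one has to rule out a Maker who keeps the fringe of $R$ near-extremal everywhere at once while forever preventing any bounded region from being sealable, and it is the delicate accounting of concave corners, wasted moves and protrusion widths over a fixed window — and the need to keep it airtight against an adversarial Maker — that forces the loss from the clean constant $2$ and fixes the precise values $14$ and $24$.
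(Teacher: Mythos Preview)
This theorem is not proved in the present paper: it is quoted verbatim as Theorem~1.2 of the authors' earlier paper~\cite{Dvorak2021-ad} and used here only to record the state of the art. There is therefore no proof in this paper to compare your proposal against.

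As for the proposal itself, the overall shape --- maintain a finite region $R$ containing Maker's component of $v_0$, commit Breaker to claiming its edge-boundary, and amortise using the slack in the planar edge-isoperimetric bound $2k+4$ --- is indeed the mechanism behind both the $b \ge 2m$ baseline and the improvement in~\cite{Dvorak2021-ad}. But what you have written is a programme rather than a proof. The entire content of the theorem lives inside your ``windowed dichotomy'' lemma, which you state only as a desideratum: over $c_0$ rounds either Breaker can seal off $v_0$ or Maker's efficiency drops to roughly $(2-\tfrac{1}{14})m$. You give no mechanism for why the constant is $\tfrac{1}{14}$ rather than, say, $\tfrac{1}{100}$ or $\tfrac{1}{3}$, and no argument that an adversarial Maker cannot keep the boundary of $R$ near-extremal indefinitely while still escaping. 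The phrases ``balancing an explicit window length against an explicit finite family of protrusion gadgets'' and ``delicate accounting of concave corners'' describe the \emph{kind} of work required without doing any of it. In~\cite{Dvorak2021-ad} the actual argument is a careful case analysis of how Maker's component can grow, and the constants $14$ and $24$ fall out of specific configurations; your sketch does not identify what those configurations are or why they are exhaustive.
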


Therefore, $1/2 \leq \rho(\ZZ^2) \leq 27/14 \approx 1.93$ should $\rho(\ZZ^2)$ exist, which are the current records.
While \Cref{thm:main-old} breaks the perimetric barrier from Breaker's side, no progress was made over \Cref{thm:original} from Maker's perspective.

\begin{problem}
Show that there exists $\eps > 0$ such that Maker wins the $(m,b)$ game on $\ZZ^2$ with $m \geq (2 - \eps)b$ and $b$ large enough.
\end{problem}

In fact, even the following more modest task is open.

\begin{problem}[Stated in~\cite{Day2021-ua}]
Determine which of the players has a winning strategy for the $(2b-1,b)$ game on $\ZZ^2$.
\end{problem}

We believe the answer should be Maker for large enough $b$.
Perhaps surprisingly, the $(2,2)$ game on $\ZZ^2$ is still open.
If one believes Breaker wins the $(2,2)$ game, then by bias monotonicity it should be easier to prove that he wins the $(2,3)$ game.
Similarly, if Maker wins the $(2,2)$, game then it should be easier to show that she wins the $(3,2)$ game.
All these three games are open.

\begin{problem}[Questions 5.3 and 5.4 in~\cite{Day2021-ua}]
Determine which of the players has a winning strategy for the $(3,2)$, $(2,2)$ or the $(2,3)$ game on $\ZZ^2$.
\end{problem}

These are the smallest pairs $(m,b)$ which are undetermined in $\ZZ^2$.
It will be especially interesting to determine the $(2,2)$ game.

\subsection{Higher dimensions}

As previously alluded in the introduction, not much is known about Maker-Breaker percolation games in higher dimensions.
Since $\ZZ^d$ is a subgraph of $\ZZ^{d+1}$ we have some monotonicity relations: if Maker wins the $(m,b)$ game on $\ZZ^d$, then she also wins the $(m,b)$ game in $\ZZ^{d+1}$, whereas if Breaker wins the $(m,b)$ game on $\ZZ^{d+1}$, then he also wins the $(m,b)$ on $\ZZ^d$.

When $d = 2$, the starting point of our investigation was the pairing strategy for Maker in the $(1,1)$ game.
As shown by Day and Falgas-Ravry, a similar strategy can also be used when $d \geq 3$.

\begin{theorem}[Theorem 1.2 in~\cite{Day2021-ua}]
\label{thm:pairing-highdim}
Maker has a winning strategy for the $(t,t)$ game on $\ZZ^d$, for every $1 \leq t \leq d-1$.
\end{theorem}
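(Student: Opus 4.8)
The plan is to equip Maker with an explicit pairing-type strategy that generalises the up-right pairing she uses to win the $(1,1)$ game on $\ZZ^2$.

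First I would fix notation. For $v \in \ZZ^d$, call the $d$ edges $\set{v, v+e_1}, \dotsc, \set{v, v+e_d}$ the \emph{forward edges at $v$}, and write $S_v \defined \set{\set{v, v+e_i} \st 1 \le i \le d}$. One checks immediately that every edge of $\ZZ^d$ lies in exactly one such set, so the family $\set{S_v \st v \in \ZZ^d}$ partitions $E(\ZZ^d)$ with $\card{S_v} = d$ for all $v$. A \emph{monotone path} from $v_0$ is a path of the form $v_0,\ v_0+e_{i_1},\ v_0+e_{i_1}+e_{i_2},\ \dotsc$, and it uses exactly one edge of $S_v$ at each vertex $v$ it passes through. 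The key observation is this: if at every moment of the game every $S_v$ still contains at least one edge not claimed by Breaker, then at each finite time Maker can greedily build an infinite monotone path from $v_0$ avoiding all edges Breaker has claimed so far (at each vertex $v$ reached, follow a non-Breaker edge of $S_v$), so the component of $v_0$ stays infinite and Maker wins. Thus it suffices for Maker to maintain the invariant that every $S_v$ always has a non-Breaker edge, and here the hypothesis $t \le d-1$ enters crucially: since $\card{S_v} = d > t$, Breaker cannot fill an entire $S_v$ in a single turn.

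Maker's strategy would then be the following. After choosing $v_0$, she maintains the stronger invariant that, at the end of each of her turns, every $S_v$ is either \emph{secured} (contains an edge of Maker) or \emph{pristine} (contains no edge of Breaker); at the start, all sets are pristine. When Breaker plays his $t$ edges in a turn, each falls into exactly one $S_v$, so at most $t$ previously-pristine sets become ``dirty'' — touched by Breaker but still containing no Maker edge — and every dirty set then has between $1$ and $t < d$ Breaker edges, hence still contains an unclaimed edge. On her turn Maker claims one unclaimed edge in each dirty set, which uses at most $t$ moves and hence fits within her budget, thereby securing all of them; any leftover moves she spends arbitrarily. Since secured sets stay secured forever, and Maker's extra moves only ever add non-Breaker edges (possibly promoting a pristine set to secured, which is harmless), the invariant is restored. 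Consequently, at every moment each $S_v$ has at most $d-1$ Breaker edges — secured sets because $\card{S_v}=d$ and one edge is Maker's, pristine sets because they have none, and dirty sets because $t \le d-1$ — so the hypothesis of the key observation holds throughout and Maker wins.

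I expect no serious obstacle; the only point requiring care is the bookkeeping that makes the secured/pristine invariant genuinely inductive — in particular that every set Breaker has ever touched was secured on Maker's immediately following turn, and that the filler moves cannot hurt Maker. As sanity checks, for $d=2$, $t=1$ this is exactly the classical edge-pairing of $\set{v,v+e_1}$ with $\set{v,v+e_2}$; and for $d > t+1$ one could equivalently first restrict attention to the $(t+1)$-dimensional coordinate sublattice through $v_0$, although the argument above already covers every admissible pair $(t,d)$ directly.
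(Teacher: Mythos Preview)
Your proof is correct. Note, however, that the paper does not give its own proof of this statement: it is quoted as Theorem~1.2 of Day and Falgas-Ravry~\cite{Day2021-ua}, and the paper merely remarks that ``a similar strategy'' to the $(1,1)$ pairing on $\ZZ^2$ works in higher dimensions. Your argument is precisely that generalisation --- partitioning $E(\ZZ^d)$ into the forward-edge blocks $S_v$ of size $d$ and having Maker secure each block Breaker touches --- and it matches the sketch the paper gives for $d=2$ in the introduction.
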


In particular, Maker wins the $(1,1)$ and $(2,2)$ games in $\ZZ^3$.
However, we determined in \Cref{cor:deterministic-breaker} that Breaker wins the (boosted) $(1,4)$ game on $\ZZ^3$.

\begin{problem}
Determine which of the players has a winning strategy for the $(1,2)$ and the $(1,3)$ game on $\ZZ^3$.
\end{problem}

It will also be interesting to determine the minimum $b$ such that Breaker wins the $(2,b)$ game on $\ZZ^3$.
We have that $b \leq 22$ from \Cref{cor:deterministic-breaker} together with the fact that $\kappa(\ZZ^3) \leq 4.7387 < \sqrt{21}$, as shown by Pönitz and Tittmann~\cite{Ponitz2000-yg}.

For all $d \geq 2$ we know that Maker wins the $(1,1)$ game on $\ZZ^d$ and that Breaker wins the (boosted) $(1, 2d-2)$ game on $\ZZ^d$, as this follows from \Cref{cor:deterministic-breaker}.
Hence, in higher dimensions, it is natural to ask the following question.

\begin{problem}
For each $d \geq 3$ determine the smallest $b$ such that Breaker wins the $(1,b)$ game on $\ZZ^d$.
\end{problem}

We also established with \Cref{thm:1dtrivial} that Breaker almost surely has a winning strategy for the $(1,d-1)$ game on $(\ZZ^d)_p$ with $p < 1$.
The behaviour for $p = 1$, however, is still undetermined.

\begin{problem}
Determine which of the players has a winning strategy for the $(1,d-1)$ game on $\ZZ^d$ when $d \geq 3$.
\end{problem}

Regardless of who wins the $(1,d-1)$ game on $\ZZ^d$, we take the fact that Breaker wins the $(1,d-1)$ game on $(\ZZ^d)_p$ for all $p < 1$ as a strong indication that Breaker wins the $(1,d)$ game on $\ZZ^d$.

\subsection{A heuristic argument for critical bias ratios}

Consider the $(m, \rho m)$ game on $\ZZ^2$ where $m$ is large.
If during optimal play, every edge gets eventually claimed, then a proportion $1/(1 + \rho)$ of the edges will be claimed by Maker.
Following the probabilistic intuition, as discussed in \Cref{subsec:phase-diagrams}, if both players play in a random-like manner then one can think of the graph spanned by the edges claimed by Maker as $(\ZZ^2)_p$, where $p = 1/(1+\rho)$.
Then, one could argue that as $\pcbond(\ZZ^2) = 1/2$, Maker should win if
\begin{equation*}
    \frac{1}{1 + \rho} > \pcbond(\ZZ^2) = \frac{1}{2},
\end{equation*}
or, equivalently, if $\rho < 1$.
Similarly, Breaker should win if $1/(1 + \rho) \leq 1/2$, so if $\rho \geq 1$.
We consider the following conjecture as our most important one.
\begin{conjecture}
The critical bias ratio of $\ZZ^2$ is $\rho(\ZZ^2) = 1$.
\end{conjecture}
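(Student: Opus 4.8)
The plan is to prove a Maker--Breaker analogue of the Harris--Kesten theorem. Recall that $\rho(\ZZ^2) = 1$ amounts to two statements: Breaker wins the $(m, m + \varphi(m))$ game and Maker wins the $(m, m - \varphi(m))$ game for some positive $\varphi(m) = o(m)$. The probabilistic intuition from \Cref{subsec:phase-diagrams} identifies the exponent with $p = 1/(1+\rho)$, so the Breaker direction $\rho \le 1$ is the ``subcritical'' side $p < 1/2$---Breaker, claiming slightly more than half the edges, should be able to enclose the origin with a dual circuit, just as dual circuits surround every vertex in $(\ZZ^2)_{1/2}$---while the Maker direction $\rho \ge 1$ is the ``supercritical'' side $p > 1/2$, where Maker should maintain an infinite primal path, as in Kesten's argument.

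For the Breaker direction I would first try a self-duality argument. Since the plane dual of $\ZZ^2$ is again $\ZZ^2$, there is a bijection $e \leftrightarrow e'$ between the primal and dual edge sets, and one can ask Breaker to ``shadow'' Maker, responding to a primal move by the corresponding dual move while spending the surplus $\varphi(m)$ moves each round to tip the balance near the origin; the aim is to argue that a winning primal path for Maker would force a crossing dual path for Breaker, contradicting planarity. The difficulty is that the primal and dual boards are disjoint, so this is not a genuine pairing of the board with itself, and the asymmetric victory conditions (Breaker wins in finite time, Maker only in the limit) break the symmetry. A more robust route is a renormalisation argument in the spirit of the modern proofs of the Harris--Kesten theorem: fix a large scale $L$, tile the plane with boxes, and prove a deterministic box-crossing lemma---an RSW-type statement asserting that in the near-balanced biased game Breaker can, within a bounded number of rounds, claim a dual crossing of a box with a guarantee playing the role of ``high probability''---then glue these local crossings into a dual circuit around the origin.

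For the Maker direction one dualises this: a finite-size criterion guaranteeing that Maker can maintain long primal crossings at scale $L$, together with a gluing argument that keeps the origin connected to infinity. Here an extra subtlety is that Maker wins only by surviving forever, so the gluing must be dynamic---when Breaker attacks an already-crossed box Maker must re-route through neighbouring boxes---which calls for a ``refresh'' mechanism reminiscent of the protective game of \Cref{sec:potential}. In both directions the cleanest formulation would combine the reductions of \Cref{obs:breaker-hypergraph} and \Cref{obs:maker-hypergraph-refined} with a crossing-event hypergraph at scale $L$.

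The main obstacle---and the reason this remains a conjecture---is the gap between the heuristic and what adversarial methods currently deliver. The potential-based strategies of \Cref{thm:potential-breaker} and \Cref{thm:potential-maker}, our sharpest general tools, yield bias ratios bounded away from $1$ (exponentially weak on Breaker's side, and restricted to $b = 1$ on Maker's side), because the danger function ignores the global planar geometry that makes $\pcbond(\ZZ^2) = 1/2$ sharp. In the random model, $p = 1/2$ is critical precisely because exact self-duality and RSW theory conspire; transferring \emph{both} ingredients to a setting where the opponent coordinates his moves globally, rather than placing them independently, is the crux of the matter. A sensible intermediate target is to first settle whether $\rho(\ZZ^2)$ exists at all, or to narrow the current window $1/2 \le \rho(\ZZ^2) \le 27/14$ from either side.
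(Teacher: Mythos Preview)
The statement is a \emph{conjecture} in the paper, not a theorem; the paper offers no proof, only the probabilistic-intuition heuristic that if both players played randomly then Maker's edges would resemble $(\ZZ^2)_{1/(1+\rho)}$, so the Harris--Kesten threshold $\pcbond(\ZZ^2)=1/2$ suggests $\rho(\ZZ^2)=1$. Your proposal correctly recognises this---you explicitly call it ``the reason this remains a conjecture'' and describe the current window $1/2 \le \rho(\ZZ^2) \le 27/14$---so there is nothing to compare at the level of proofs: neither you nor the paper has one.

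Your discussion goes well beyond the paper's heuristic by sketching a programme (self-duality shadowing, RSW-type box-crossing lemmas, renormalisation, dynamic gluing for Maker) and honestly identifying the obstruction: the potential-based tools of \Cref{sec:potential} are blind to the planar geometry that makes $p=1/2$ sharp, and transferring RSW from an i.i.d.\ model to an adversarial one is the crux. This is a reasonable research plan, but it is not a proof attempt in the usual sense, and you should not present it as one. In particular, the ``self-duality shadowing'' idea you float first is not merely difficult but essentially a non-starter as stated: Breaker responding to a primal move $e$ by claiming the dual edge $e'$ is meaningless, since $e$ and $e'$ are the \emph{same} board element in the percolation game, so there is no shadowing move to make. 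The renormalisation route is the more serious suggestion, but even formulating a deterministic RSW statement for biased games---what replaces ``with high probability'' when the opponent is adversarial---is itself an open problem.
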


For general lattices, the same heuristic leads to the following conjecture.

\begin{conjecture}
\label{conj:ratio}
If $\Lambda$ is a connected vertex-transitive graph, then
\begin{equation*}
    \rho(\Lambda) = \frac{1}{\pcbond(\Lambda)} - 1.
\end{equation*}
\end{conjecture}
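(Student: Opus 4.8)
Write $\rho^{\ast} = \rho^{\ast}(\Lambda) \defined 1/\pcbond(\Lambda) - 1$. To prove the conjecture it suffices (after a routine diagonalisation producing a single admissible $\varphi(m) = o(m)$) to show that for every $\eps > 0$ and all large $m$, Breaker wins the $(m, \lceil(\rho^{\ast} + \eps)m\rceil)$ game on $\Lambda$ while Maker wins the $(m, \lfloor(\rho^{\ast} - \eps)m\rfloor)$ game. In both cases $p \defined m/(m+b)$ is the fraction of edges Maker eventually holds in a fully resolved game, and the line $b/m = \rho^{\ast}$ is exactly the locus $p = \pcbond(\Lambda)$: the conjecture thus asserts that the optimal bias ratio is governed \emph{exactly} by the percolation threshold, a sharp ``probabilistic intuition'' statement of the kind established for the finite connectivity and Hamiltonicity games but open in this geometric setting. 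It is worth stressing at the outset that the potential-based machinery of \Cref{sec:potential} and \Cref{sec:analysis} is by itself nowhere near strong enough: \Cref{thm:potential-breaker} on a deterministic board forces $b$ to be exponential in $m$, and the best linear bound known, \Cref{thm:main-old}, only yields $\rho(\ZZ^2) \le 27/14$. Any proof must therefore exploit genuinely probabilistic features of sub- and super-critical percolation that Beck-type first-moment weights cannot detect.

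For the Breaker half I would use a renormalisation argument. Fix $\eps > 0$ so that $p = m/(m+b) < \pcbond(\Lambda)$, and tile $\Lambda$ into boxes of large side $L = L(\eps)$. Call a box \emph{blocked} if the sub-board of edges claimed by Maker has no crossing between opposite faces of the (doubled) box. By the exponential decay of subcritical connectivity, in $(\Lambda)_p$ a box is blocked with probability tending to $1$ as $L \to \infty$; the crux is a game-theoretic strengthening, namely that \emph{Maker, playing adaptively with only a $p$-share of the edges of a box, cannot beat an i.i.d.\ $p$-set at creating a crossing}, up to an error in $L$ that vanishes as $L \to \infty$. Granting this, Breaker can force blocked boxes to appear with density close to $1$ on the coarse lattice of boxes, and a Peierls-type argument — the surface analogue of the dual-cycle bootstrap argument sketched in \Cref{sec:bootstrap} — then produces around any $v_0$ a ``membrane'' of blocked boxes whose union contains a separating set, which Breaker realises. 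The missing lemma is the heart of the matter; I would approach it by strategy stealing: if Maker's optimal crossing strategy beat the random bound, averaging Breaker's responses over a random relabelling of the board should derandomise it and contradict subcritical exponential decay.

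The Maker half is dual. Now $p = m/(m+b) > \pcbond(\Lambda)$ and Maker wants to keep $v_0$ in an infinite cluster. She should secure \emph{good boxes} — boxes in which her edges realise crossings in every coordinate direction, so that good boxes glue into an unbounded cluster — responding reactively, always playing inside the box where Breaker last moved, using the ladder gadget of \Cref{sec:potential} to shield $v_0$ itself and a box-local connection game elsewhere. The analogue of the Breaker lemma is: \emph{a $(1-p)$-share of adaptively chosen deletions cannot destroy all crossings of a large box once $p$ exceeds the crossing threshold by a fixed amount}; together with a standard block renormalisation this hands Maker a supercritical skeleton of good boxes through $v_0$.

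The single main obstacle, common to both directions, is precisely the substance of the conjecture: showing that an adaptive adversary is no more effective than i.i.d.\ randomness at building (or obstructing) a percolating cluster, and that this holds sharply at $\pcbond(\Lambda)$. None of the standard sharp-threshold tools (Bollobás--Thomason, Friedgut, or the percolation sharpness theorems of Aizenman--Barsky and Duminil-Copin--Tassion) applies directly to a two-player game, and $\pcbond(\Lambda)$ is not even known in closed form for most $\Lambda$, so a proof has to be ``$\pcbond$-agnostic'', extracting the value of $\rho(\Lambda)$ purely from the crossing-probability behaviour that defines $\pcbond(\Lambda)$. I would expect a viable attack to be carried out first for $\ZZ^2$, where Harris--Kesten and RSW theory provide the needed quantitative control of box-crossings, before any assault on the general vertex-transitive statement.
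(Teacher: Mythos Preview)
The statement you are attempting to prove is \Cref{conj:ratio}, which is presented in the paper as a \emph{conjecture}, not a theorem. The paper offers no proof whatsoever; the surrounding text reads ``If true, we expect this conjecture to be quite hard, especially for its generality.'' The only supporting material in the paper is the informal probabilistic-intuition heuristic in \Cref{sec:heuristics}: if both players play ``in a random-like manner'' then Maker's edges resemble $(\Lambda)_p$ with $p = 1/(1+\rho)$, and comparing with $\pcbond(\Lambda)$ suggests the stated value. There is nothing for your proposal to be compared against.

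As for the proposal itself, you are candid that it is not a proof but a programme, and you correctly locate the gap. Both halves of your renormalisation scheme rest on an unproven ``adaptive-versus-i.i.d.'' lemma: that an adaptive player with a $p$-share of the edges of a large box cannot outperform (for Maker) or underperform (for Breaker) an i.i.d.\ $p$-set at creating or destroying crossings. You write ``Granting this'' and ``The missing lemma is the heart of the matter,'' which is accurate --- that lemma \emph{is} the conjecture, restated at a finite scale. The strategy-stealing sketch you offer (averaging Breaker's responses over a random relabelling) does not obviously work: relabelling the board does not decouple the players' moves, and there is no reason an averaged adaptive strategy should collapse to an i.i.d.\ configuration. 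Your closing paragraph essentially concedes all of this. So what you have written is a reasonable articulation of why the conjecture is plausible and what a proof would need to overcome, but it is not a proof, and the paper does not claim one either.
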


If true, we expect this conjecture to be quite hard, especially for its generality.

If we were to conjecture what should be the full phase diagram for the $(1,1)$ game $(\ZZ^2)_{\alpha,\beta}$ we would suggest the phase diagram as in\Cref{fig:phaseconjecture}.
Compare with \Cref{fig:phase11} for the regions we have already determined.
While in principle the boundary between the red and blue regions could be an arbitrary monotone curve, it must touch the boundary of the triangle $\rT$ in the points $(0,0)$ and $(0.5,0.5)$.
While that may be enough evidence to believe that \Cref{fig:phaseconjecture} describes the correct partition, there are better reasons to do so.
If it is the case that $\rho(\ZZ^2) = 1$, than this amounts to say that Maker and Breaker are asymptotically equally strong in $\ZZ^2$.
This interpretation is appealing as $\ZZ^2$ is self-dual, and there are roughly as many self-avoiding walks from $v_0$ as there are dual cycles around $v_0$, of a given length, and these can be thought of as winning sets for Maker and Breaker, respectively.

\begin{figure}[ht!]
\centering
\begin{tikzpicture}[thick, scale=1]
\begin{axis}[
    width=10em,height=10em,
    scale only axis, axis equal,
    axis x line* = bottom,
    axis y line* = left,
    axis line style={draw=none},
    ymin=-0.04, ymax=1.04,
    xmin=-0.04, xmax=1.04,
    xtick = {0, 1/2, 1},
    xticklabels = {0, 1/2, 1},
    ytick = {0, 1/2, 1},
    yticklabels = {0, 1/2, 1},
    xlabel=$\alpha$,
    ylabel=$\;\;\beta$,
    every axis x label/.style={at={(current axis.right of origin)},anchor=west},
    every axis y label/.style={at={(current axis.north west)},anchor=south},
]

\filldraw[ultra thick, draw=blue, fill=blue!20] (0,0) -- (1,0) -- (0.5,0.5);
\filldraw[ultra thick, draw=red, fill=red!20] (0,1) -- (0.5,0.5) -- (0,0) -- cycle;
\filldraw[blue] (0,0) circle (0.2em);
\filldraw[red] (0.5,0.5) circle (0.2em);
\end{axis}
\end{tikzpicture}
\caption{Tentative phase diagram for the $(1,1)$ game on $(\ZZ^2)_{\alpha,\beta}$.}
\label{fig:phaseconjecture}
\end{figure}
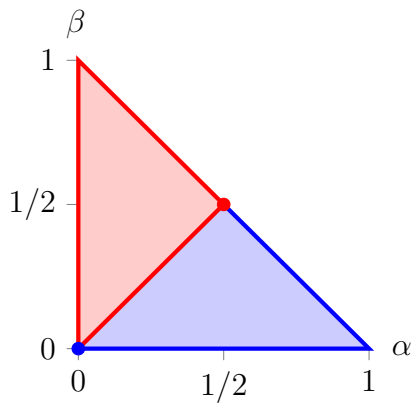

The diagram in \Cref{fig:phaseconjecture} does not only imply that the boundary between the two regions is the curve $\alpha = \beta$, but also that Breaker wins the $(1,1)$ game on the critical line if $\alpha, \beta > 0$.
While it may be quite ambitious to conjecture the behaviour on the phase boundary, we believe that overall Breaker has a slight advantage over Maker.
The pairing strategy employed by Maker when $\alpha = \beta = 0$ makes the colouring very far from random and breaks the heuristic argument.
If there is some true randomness in the initial configuration, then we believe that Breaker has the advantage.
We must emphasise that in principle, even if the boundary between the phases is indeed the line $\alpha = \beta$, then any colouring of the line is possible, as no monotonicity relationship holds between the points on this line.

\begin{problem}
Is the phase diagram for the $(1,1)$ Maker-Breaker percolation game on $(\ZZ^2)_{\alpha,\beta}$ as in \Cref{fig:phaseconjecture}?   
\end{problem}

In fact, we deem the determination of the phase diagram for the $(1,1)$ game on $(\ZZ^2)_{\alpha, \beta}$ to be of central importance.
Of course, it is also of great interest to fully determine the phase diagram for the $(m,b)$ game on $(\ZZ^2)_{\alpha,\beta}$ when $(m,b) \neq (1,1)$, but that may prove even harder.
In this direction, we single out the two compelling questions below.

We have seen from \Cref{thm:21nontrivial} that $\pcmb{(2,1)}(\ZZ^2)$ is non-trivial, meaning it lies in the open interval $(1/2,1)$.
In fact, $(\alpha,\beta) = (0,1-\pcmb{(2,1)}(\ZZ^2))$ is one of the points in the phase diagram where the phase boundary touches the boundary of the triangle $\rT$, the other being the point $(\alpha,\beta) = (1/2,1/2)$.

\begin{problem}
Determine the value of $\pcmb{(2,1)}(\ZZ^2)$, or equivalently, the value of
\begin{equation*}
    \beta^\ast \defined \inf \set[\big]{0 \leq \beta \leq 1 \st \text{almost surely, Breaker wins the $(2,1)$ game on $(\ZZ^2)_{0,\beta}$}}.
\end{equation*}
\end{problem}

We also pose the analogous question for the $(1,2)$ game.

\begin{problem}
Determine the value of 
\begin{equation*}
    \alpha^\ast \defined \inf \set[\big]{0 \leq \alpha \leq 1 \st \text{almost surely, Maker wins the $(1,2)$ game on $(\ZZ^2)_{\alpha,0}$}}.
\end{equation*}
\end{problem}

We know that $0.52784 \leq \pcmb{(2,1)}(\ZZ^2) \leq 0.94013$ from \Cref{thm:21nontrivial}, so we have the bounds $0.05987 \leq \beta^\ast \leq 0.47216$.
The bounds $0.05987 \leq \alpha^\ast \leq 1/2$ follow from \Cref{thm:potential-breaker-symmetric}.
The lower bounds we obtained for $\alpha^\ast$ and $\beta^\ast$ are both equal to ${(3/\kappa(\ZZ^2) - 1)/2}$.
Recall \Cref{fig:phase2112} for the current known regions in the phase diagrams of the $(2,1)$ and the $(1,2)$ games on $(\ZZ^2)_{\alpha,\beta}$.

Similarly to thresholds in bond percolation, there is no special reason to believe that these constants have nice closed form expressions.
On the other hand, even estimating them numerically could prove to be very challenging.

\subsection{Hexagonal and triangular lattices}

Denote by $\HH$ and $\TT$ the hexagonal and triangular lattices, respectively.
Recall that Duminil-Copin and Smirnov~\cite{Duminil-Copin2012-jj} determined that $\kappa(\HH) = \sqrt{2 + \sqrt{2}}$.
As for the triangular lattice, the exact value of $\kappa(\TT)$ is unknown, however, it was shown by Alm~\cite{Alm1993-tn} that $\kappa(\TT) \leq 4.278$.
With these bounds in hand, we determined in \Cref{cor:deterministic-breaker} that Breaker wins the $(1,1)$ and the $(2,3)$ game on $\HH$.
This begs the following question.

\begin{problem}
Determine which of the players has a winning strategy for the $(2,1)$ and the $(2,2)$ games on $\HH$.
\end{problem}

As for the triangular lattice, \Cref{cor:deterministic-breaker} gives us that Breaker wins the  $(1,4)$ and the $(2,18)$ game on $\TT$.
We now describe a simple pairing strategy\footnote{In the language of Section 3 of Day and Falgas-Ravry~\cite{Day2021-ua}, these strategies can be obtained as a consequence of the fact that the triangular lattice is \emph{$3$-path-colourable}.} to show that Maker wins the $(1,1)$ and the $(2,2)$ games on $\TT$.

In the standard embedding triangular lattice into $\RR^2 \cong \CC$, every vertex $x$ is connected to a vertex $x + \omega^{k}$, where $\omega \defined e^{2 \pi i / 6}$ is a root of unity and $0 \leq k \leq 5$.
Partition the edges of $\TT$ into triples of edges of the form
\begin{equation*}
    T_x \defined \set[\big]{ \set{x, x + 1}, \set{x, x + \omega}, \set{x,x + \omega^{-1}}}.
\end{equation*}
In other words, $T_x$ consists of the edges from $x$ whose endpoint is on the east of $x$.

To win the $(1,1)$ game on $\TT$, Maker reacts to Breaker in the following way.
If Breaker claims an edge in $T_x$, then Maker claims another edge in $T_x$, if possible.
The only occasion where this is not possible is when Breaker claims an edge in $T_x$ for the second time, in which case Maker already has an edge in $T_x$.
In any case, we maintain the invariant that no set $T_x$ can be fully claimed by Breaker.
This implies that there is always an infinite path from the origin with no edges of Breaker, hence Maker wins.

For the $(2,2)$ game, if Breaker claims two edges in a single $T_x$ in a given turn, Maker claims the missing edge from $T_x$.
Otherwise, if Breaker claims two edges, one in $T_x$ and another in $T_y$, Maker responds by claiming one edge in $T_x$ and another in $T_y$, if possible.
The effect is the same and no $T_x$ can be fully claimed by Breaker.

\begin{problem}
Determine which of the players has a winning strategy for the $(1,2)$ and the $(1,3)$ games on $\TT$.
\end{problem}

Another interesting question is to determine the smallest $b$ such that Breaker wins the $(2,b)$ game on $\TT$.
With the aforementioned strategy and \Cref{cor:deterministic-breaker}, we know that $3 \leq b \leq 18$.

The bond percolation thresholds for the triangular and hexagonal lattices have been determined by Sykes and Essam~\cite{Sykes1964-xs}, who proved that $\pcbond(\HH) = 1 - 2 \sin(\pi/18) \approx 0.6527$ and $\pcbond(\TT) = 2 \sin(\pi/18) \approx 0.3473$.
Thus, if \Cref{conj:ratio} is true, that would give
\begin{align*}
    \rho(\HH) &= \frac{1}{1 - 2 \sin(\pi/18)} - 1 \approx 0.532, \\
    \rho(\TT) &= \frac{1}{2 \sin(\pi/18)} - 1 \approx 1.879.
\end{align*}
Even though the critical ratio $\rho$ is an asymptotic quantity, this could suggest that Breaker wins the $(2,1)$ game on $\HH$ and that Maker wins the $(1,2)$ game on $\TT$.
See Wallwork~\cite{Wallwork2022-kb} for some results for Maker-Breaker crossing games on the triangular lattice, which can be seen as a finitary version of the Maker-Breaker percolation game, also introduced by Day and Falgas-Ravry~\cite{Day2021-sa}.


\section*{Acknowledgements}

The authors would like to thank their PhD supervisor Professor Béla Bollobás for his continued support and his helpful comments.
We would also like to thank Victor Falgas-Ravry for fruitful conversations on this subject.


\bibliographystyle{amsplain}

\begin{bibdiv}
\begin{biblist}

\bib{Alm1993-tn}{article}{
      author={Alm, S.~E.},
       title={{Upper bounds for the connective constant of self-avoiding
  walks}},
        date={1993},
        ISSN={1469-2163, 0963-5483},
     journal={Combinatorics, Probability and Computing},
      volume={2},
      number={2},
       pages={115\ndash 136},
}

\bib{Beck1982-rh}{article}{
      author={Beck, J.},
       title={{Remarks on positional games. I}},
        date={1982},
        ISSN={1588-2632},
     journal={Acta Mathematica Academiae Scientiarum Hungarica},
      volume={40},
      number={1},
       pages={65\ndash 71},
}

\bib{Beck2008-ua}{book}{
      author={Beck, J.},
       title={{Combinatorial games: Tic-Tac-Toe theory}},
      series={Encyclopedia of Mathematics and its Applications},
   publisher={Cambridge University Press},
        date={2008},
      volume={114},
        ISBN={9780521461009},
}

\bib{Bednarska2000-gs}{article}{
      author={Bednarska, M.},
      author={Łuczak, T.},
       title={{Biased Positional Games for Which Random Strategies are Nearly
  Optimal}},
        date={2000},
        ISSN={0209-9683, 1439-6912},
     journal={Combinatorica},
      volume={20},
      number={4},
       pages={477\ndash 488},
}

\bib{Bollobas2006-rk}{article}{
      author={Bollobás, B.},
      author={Riordan, O.},
       title={{A short proof of the Harris–Kesten theorem}},
        date={2006},
        ISSN={0024-6093, 1469-2120},
     journal={Bulletin of the London Mathematical Society},
      volume={38},
      number={3},
       pages={470\ndash 484},
}

\bib{Bollobas2006-jp}{book}{
      author={Bollobás, B.},
      author={Riordan, O.},
       title={{Percolation}},
   publisher={Cambridge University Press},
        date={2006},
        ISBN={9780521872324},
}

\bib{Bollobas2007-zp}{article}{
      author={Bollobás, B.},
      author={Riordan, O.},
       title={{A note on the Harris–Kesten theorem}},
        date={2007},
        ISSN={0195-6698},
     journal={European Journal of Combinatorics},
      volume={28},
      number={6},
       pages={1720\ndash 1723},
}

\bib{Bollobas1987-fk}{article}{
      author={Bollobás, B.},
      author={Thomason, A.~G.},
       title={{Threshold functions}},
        date={1987},
        ISSN={0209-9683, 1439-6912},
     journal={Combinatorica},
      volume={7},
      number={1},
       pages={35\ndash 38},
}

\bib{Chvatal1978-hm}{article}{
      author={Chvátal, V.},
      author={Erdős, P.},
       title={{Biased positional games}},
        date={1978},
     journal={Annals of Discrete Mathematics},
      volume={2},
       pages={221\ndash 229},
}

\bib{Day2021-ua}{article}{
      author={Day, A.~N.},
      author={Falgas-Ravry, V.},
       title={{Maker-Breaker percolation games II: escaping to infinity}},
        date={2021},
        ISSN={0095-8956},
     journal={Journal of Combinatorial Theory. Series B},
      volume={151},
       pages={482\ndash 508},
}

\bib{Day2021-sa}{article}{
      author={Day, A.~N.},
      author={Falgas-Ravry, V.},
       title={{Maker–Breaker percolation games I: crossing grids}},
        date={2021},
        ISSN={0963-5483, 1469-2163},
     journal={Combinatorics, Probability and Computing},
      volume={30},
      number={2},
       pages={200\ndash 227},
}

\bib{Duminil-Copin2012-jj}{article}{
      author={Duminil-Copin, H.},
      author={Smirnov, S.},
       title={{The connective constant of the honeycomb lattice equals $\sqrt{2
  + \sqrt{2}}$}},
        date={2012},
        ISSN={0003-486X, 1939-8980},
     journal={Annals of Mathematics},
      volume={175},
      number={3},
       pages={1653\ndash 1665},
}

\bib{Dvorak2021-ad}{article}{
      author={Dvořák, V.},
      author={Mond, A.},
      author={Souza, V.},
       title={{The Maker-Breaker percolation game on the square lattice}},
        date={2021},
      eprint={2105.12864},
}

\bib{Erdos1973-gg}{article}{
      author={Erdős, P.},
      author={Selfridge, J.~L.},
       title={{On a combinatorial game}},
        date={1973},
        ISSN={0097-3165},
     journal={Journal of Combinatorial Theory. Series A},
      volume={14},
      number={3},
       pages={298\ndash 301},
}

\bib{Fisher1959-nt}{article}{
      author={Fisher, M.~E.},
      author={Sykes, M.~F.},
       title={{Excluded-volume problem and the Ising model of ferromagnetism}},
        date={1959},
        ISSN={0959-8472},
     journal={Physics Review},
      volume={114},
      number={1},
       pages={45\ndash 58},
}

\bib{Gebauer2009-ha}{article}{
      author={Gebauer, H.},
      author={Szabó, T.},
       title={{Asymptotic random graph intuition for the biased connectivity
  game}},
        date={2009},
        ISSN={1042-9832, 1098-2418},
     journal={Random Structures \& Algorithms},
      volume={35},
      number={4},
       pages={431\ndash 443},
}

\bib{Harris1960-di}{article}{
      author={Harris, T.~E.},
       title={{A lower bound for the critical probability in a certain
  percolation process}},
        date={1960},
        ISSN={1469-8064, 0305-0041},
     journal={Mathematical Proceedings of the Cambridge Philosophical Society},
      volume={56},
      number={1},
       pages={13\ndash 20},
}

\bib{Hefetz2014-qc}{book}{
      author={Hefetz, D.},
      author={Krivelevich, M.},
      author={Stojaković, M.},
      author={Szabó, T.},
       title={{Positional games}},
      series={Oberwolfach Seminars},
   publisher={Birkhäuser Basel},
        date={2014},
}

\bib{Kesten1980-fj}{article}{
      author={Kesten, H.},
       title={{The critical probability of bond percolation on the square
  lattice equals $1/2$}},
        date={1980},
        ISSN={0010-3616, 1432-0916},
     journal={Communications in Mathematical Physics},
      volume={74},
      number={1},
       pages={41\ndash 59},
}

\bib{Krivelevich2011-rv}{article}{
      author={Krivelevich, M.},
       title={{The critical bias for the Hamiltonicity game is $(1 + o(1)) n /
  \ln n$}},
        date={2011},
        ISSN={0894-0347, 1088-6834},
     journal={Journal of the American Mathematical Society},
      volume={24},
      number={1},
       pages={125\ndash 131},
}

\bib{Madras2013-lc}{book}{
      author={Madras, N.},
      author={Slade, G.},
       title={{The self-avoiding walk}},
   publisher={Springer New York},
        date={2013},
}

\bib{Minsky1961-bq}{article}{
      author={Minsky, M.},
       title={{Steps toward Artificial Intelligence}},
        date={1961},
        ISSN={2162-6634},
     journal={Proceedings of the IRE},
      volume={49},
      number={1},
       pages={8\ndash 30},
}

\bib{Ponitz2000-yg}{article}{
      author={Pönitz, A.},
      author={Tittmann, P.},
       title={{Improved upper bounds for self-avoiding walks in
  $\mathbb{Z}^d$}},
        date={2000},
        ISSN={1077-8926, 1077-8926},
     journal={Electronic Journal of Combinatorics},
      volume={7},
       pages={R21\ndash R21},
}

\bib{Schonmann1992-lj}{article}{
      author={Schonmann, R.~H.},
       title={{On the behavior of some cellular automata related to bootstrap
  percolation}},
        date={1992},
        ISSN={0091-1798},
     journal={Annals of Probability},
      volume={20},
      number={1},
       pages={174\ndash 193},
}

\bib{Stojakovic2005-bu}{article}{
      author={Stojaković, M.},
      author={Szabó, T.},
       title={{Positional games on random graphs}},
        date={2005},
        ISSN={1042-9832},
     journal={Random Structures \& Algorithms},
      volume={26},
      number={1-2},
       pages={204\ndash 223},
}

\bib{Sykes1964-xs}{article}{
      author={Sykes, M.~F.},
      author={Essam, J.~W.},
       title={{Exact critical percolation probabilities for site and bond
  problems in two dimensions}},
        date={1964},
        ISSN={0022-2488, 1089-7658},
     journal={Journal of Mathematics and Physics},
      volume={5},
      number={8},
       pages={1117\ndash 1127},
}

\bib{Van_Enter1987-ee}{article}{
      author={van Enter, A. C.~D.},
       title={{Proof of Straley's argument for bootstrap percolation}},
        date={1987},
        ISSN={0022-4715},
     journal={Journal of Statistical Physics},
}

\bib{Wallwork2022-kb}{article}{
      author={Wallwork, F.},
       title={{Maker-Breaker-crossing-game on the triangular grid-graph}},
        date={2022},
      eprint={2201.01348},
}

\end{biblist}
\end{bibdiv}


\end{document}